\documentclass[11pt]{amsart}
\usepackage[top=1.2in,bottom=1.2in,left=1.2in,right=1.2in]{geometry}
\usepackage[inline]{enumitem}
\usepackage{amsthm}

\usepackage{mathtools}

\usepackage{float}
\usepackage{amssymb}
\usepackage{xypic,pinlabel}
\usepackage{hyperref}
\hypersetup{
    colorlinks,
    citecolor=black,
    filecolor=black,
    linkcolor=black,
    urlcolor=black
}


\newcommand{\D}{\mathbb{D}}
\newcommand{\R}{\mathbb{R}}
\newcommand{\Z}{\mathbb{Z}}

\newcommand{\F}{\mathcal F}
\newcommand{\G}{\mathcal G}
\DeclareMathOperator{\PMF}{PMF}


\newcommand{\B}{B}
\newcommand{\PB}{P{\hspace{-.1em}}B}

\DeclareMathOperator{\Stab}{Stab}
\DeclareMathOperator{\Fix}{Fix}
\DeclareMathOperator{\Aut}{Aut}
\DeclareMathOperator{\Inn}{Inn}
\DeclareMathOperator{\Mod}{Mod}
\DeclareMathOperator{\Conf}{Conf}

\newcommand{\p}[1]{\bigskip \noindent \emph{#1}.}

\theoremstyle{plain}
\newtheorem{theorem}{Theorem}[section]
\newtheorem{proposition}[theorem]{Proposition}
\newtheorem{lemma}[theorem]{Lemma}
\newtheorem{corollary}[theorem]{Corollary}
\newtheorem{question}[theorem]{Question}

\title{Homomorphisms between braid groups}

\author{Lei Chen}

\author{Kevin Kordek}

\author{Dan Margalit}

\address{Lei Chen \\ 4176 Campus Dr 4414\\ College Park, Maryland, 20742 }
\email{chenlei1991919@gmail.com}

\address{Kevin Kordek \\ School of Mathematics\\ Georgia Institute of Technology \\ 686 Cherry St. \\ Atlanta, GA 30332}
\email{kevin.a.kordek@gmail.com}

\address{Dan Margalit \\ School of Mathematics\\ Georgia Institute of Technology \\ 686 Cherry St. \\ Atlanta, GA 30332}
\email{margalit@math.gatech.edu}

\thanks{This material is based upon work supported by the National Science Foundation under Grant Nos.\ DMS-1057874, DMS-1811941, DMS-2203431.  The first author was supported by a Sloan Research Fellowship.} 

\begin{document}
\maketitle

\vspace*{-2em}

\begin{abstract}
We give a complete classification of homomorphisms from the braid group on $n$ strands to the braid group on $2n$ strands when $n$ is at least 5.  We also classify endomorphisms of the braid group on 4 strands, as well as homomorphisms from the commutator subgroup of the braid group on $n$ strands to the braid group on $2n-5$ strands.   Our classifications suggest a recursive classification of homomorphisms between any braid groups.  We also give a simple, geometric proof of a theorem of Lin that highly constrains the holomorphic maps that may exist between spaces of monic, square-free polynomials of two given degrees.  
\end{abstract}


\section{Introduction}

Let $\B_n$ denote the braid group on $n$ strands.  A fundamental problem about these groups is to classify all homomorphisms $\B_n \to \B_m$ for various $n$ and $m$.  Work of Artin \cite{artinbp}, Lin \cite{linbp}, Dyer--Grossman \cite{dg}, Bell--Margalit \cite{bellmargalit}, and Castel \cite{castel} gives a complete classification for $n \geq 6$ and $m \leq n+1$.  We extend the classification to the case $n \geq 5$ and $m \leq 2n$.  For $m = 2n$ there are three new types of homomorphisms that do not arise when $m < 2n$.  

 \p{Standard homomorphisms} As usual we denote the standard generators for $\B_n$ by $\sigma_1,\dots, \sigma_{n-1}$.  We have the following standard homomorphisms $\B_n \to \B_{2n}$ (we compose braids right to left).
\medskip
\begin{enumerate}[leftmargin=*,itemsep=.5em]
\item \emph{Trivial:} \hspace*{16.37ex} $\sigma_i \mapsto 1$
\item \emph{Inclusion:} \hspace*{13.825ex} $\sigma_i \mapsto \sigma_i$ 
\item \emph{Diagonal inclusion:} \hspace*{4.175ex} $\sigma_i \mapsto \sigma_i\sigma_{n+i}$ 
\item \emph{Flip diagonal inclusion:} \hspace*{-.075ex} $\sigma_i \mapsto \sigma_i\sigma_{n+i}^{-1}$ 
\item \emph{$k$-twist cabling:} \hspace*{8.53ex} $\sigma_i \mapsto \sigma_{2i}\sigma_{2i-1}\sigma_{2i+1}\sigma_{2i}\sigma_{2i-1}^k$
\end{enumerate}
\medskip
There is one $k$-twist cabling map for each $k \in \Z$.  Figure~\ref{fig:cable} shows the image of $\sigma_i$ under the $k$-twist cabling map.  The first two maps also define homomorphisms $\B_n \to \B_m$ with $n\le m < 2n$.

Our main theorem, Theorem~\ref{thm:main} below, says for for $n\geq 5$, every homomorphism $\B_n \to \B_{2n}$ is equivalent to exactly one of these standard homomorphisms.  In order to state the theorem precisely, we need to define the equivalence relation, which in turn relies on two other notions, conjugation and transvection.

\p{Almost-conjugations} Let $G$ be a group, and let $\rho_0 : \B_n \to G$ be a homomorphism.  We say that another homomorphism $\rho_1 : \B_n \to G$ is conjugate to $\rho_0$ if there is an inner automorphism $A \in \Inn G$ so that $\rho_1 = A \circ \rho_0$.  Further, we say that $\rho_1$ is \emph{almost-conjugate} to $\rho_0$ if $\rho_1 = A \circ \rho_0$ with $A \in \Aut G$.  Let $G \to \Aut G$ be the natural map to the inner automorphism group $\Inn G \leqslant \Aut G$, so an element $g$ maps to conjugation by $g$.  The sense in which almost-conjugations are similar to conjugations is that the compositions
\[
\B_n \stackrel{\rho_0}{\to} G \to \Aut G \quad  \text{ and } \quad \B_n \stackrel{\rho_1}{\to} G \to \Aut G,
\]
are conjugate whenever $\rho_0$ and $\rho_1$ are almost conjugate.

\p{Transvections} Next, given a homomorphism $\rho : \B_n \to G$ as above, we say that a homomorphism $\B_n \to G$ is a \emph{transvection} of $\rho$ if its action on generators is given by
\[
\sigma_i \mapsto \rho(\sigma_i)\, t
\]
for some $t \in G$.  We denote the transvection by $\rho^t$.  We emphasize that the transvection is only defined if the given formula defines a homomorphism.  In this case, we refer to $t$ as a transvecting element for $\rho$.  We will prove that in Section~\ref{sec:tv} that $t$ is a transvecting element for $\rho$ if and only if $\rho(\sigma_i)t\rho(\sigma_i)^{-1}$ is independent of $i$, and further this is true if and only if $t$ lies in the centralizer of $\rho(\B_n')$ (Proposition~\ref{prop:tv}).  In particular, if $t$ centralizes $\rho(\B_n)$ then $t$ is a transvecting element; in this case we refer to $\rho^t$ as a central transvection of $\rho$.  If $\rho^t$ is a central transvection of $\rho$ then we have the formula
\[
\rho^t(g) = \rho(g)\, t^{L(g)}
\]
where $L : \B_n \to \Z$ is the length homomorphism (or abelianization map).  As we will explain in Section~\ref{sec:tv}, it is a consequence of our main theorem that the transvecting elements for a homomorphism $\rho : \B_n \to \B_{2n}$ are essentially the elements supported in subsurfaces of $\D_{2n}$ on which the action of $\B_n$ on $\D_{2n}$ (via $\rho$) is cyclic. 

We say two homomorphisms $\B_n \to G$ are \emph{equivalent} if one is almost-conjugate to a transvection of the other (we verify in Section~\ref{sec:tv} that this is an equivalence relation).  We say two homomorphisms are \emph{centrally equivalent} if the transvection is central.  A homomorphism $\B_n \to G$ is equivalent to the trivial map if and only if it has cyclic image. 

\p{Statement of the main theorem} We now arrive at our main result, which completely classifies homomorphisms $\B_n \to \B_{2n}$ for $n \geq 5$.  We address the case $n=4$ in Theorem~\ref{thm:castel4} in Section~\ref{sec:castel}.

\begin{theorem}\label{thm:main}
Let $n\geq 5$, and let $\rho : \B_n \to \B_{2n}$ be a homomorphism.  Then $\rho$ is equivalent to exactly one standard homomorphism $\rho_0$.  Moreover $\rho$ is centrally equivalent $\rho_0$.

\end{theorem} 

\begin{figure}
\labellist
\small\hair 2pt
\pinlabel $k$ at 163 172
\endlabellist
\includegraphics[scale=.5]{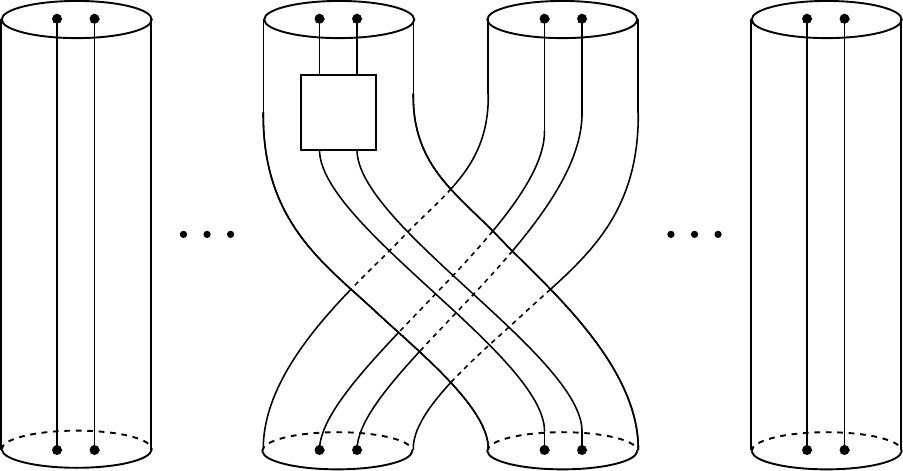}
\caption{The image of $\sigma_i$ under the $k$-twist cabling map: the $i$th and $(i+1)$st cable cross, and inside the $i$th cable there are $k$ half-twists}
\label{fig:cable}
\end{figure}

Of course Theorem~\ref{thm:main} is only as useful as our understanding of $\Aut(\B_n)$ and the central transvections.  It is a theorem of Dyer--Grossman that $\Aut(B_n)$ is isomorphic to $\B_n/Z(B_n) \rtimes \Z/2$, where the first factor acts by conjugation and the second factor acts by inversion: $\sigma_i \mapsto \sigma_i^{-1}$.  Further, the centralizers of the images of the standard homomorphisms $\B_n \to \B_{2n}$ can be completely described; see  Section~\ref{sec:tv}.  

\p{Maps to smaller braid groups} Theorem~\ref{thm:main} further gives a classification of all homomorphisms $\B_n \to \B_m$ with $m < 2n$. Indeed, given a map $\rho : \B_n \to \B_m$ with $m \leq 2n$, we may post-compose with the inclusion $\B_m\to \B_{2n}$ to obtain a homomorphism to which Theorem~\ref{thm:main} applies.  We say that $\rho$ is standard if the composition is.

\begin{corollary}
\label{cor:main}
Let $n\geq 5$, let $n \leq m < 2n$, and let $\rho : B_n \to B_{m}$ be a homomorphism.  Then $\rho$ is equivalent to exactly one standard homomorphism $\rho_0$, that is, the trivial map or the inclusion map.  Moreover $\rho$ is centrally equivalent to $\rho_0$.
\end{corollary}

\p{Sharpness of the lower bound}  The lower bound $n \geq 5$ in the statement of Theorem~\ref{thm:main} is sharp.  Indeed, there is a surjective homomorphism $\B_4 \to \B_3$ given by $\sigma_1,\sigma_3 \mapsto \sigma_1$ and $\sigma_2 \mapsto \sigma_2$; we refer to this as the \emph{exceptional homomorphism} $\B_4 \to \B_3$.  This map is not equivalent to any of the above standard homomorphisms.

There are also many non-standard homomorphisms $\B_3 \to \B_m$ for $m \geq 3$, for instance, $\sigma_1 \mapsto \sigma_1^{-1} \sigma_2^2$ and $\sigma_2 \mapsto \sigma_2^{-1}\sigma_1^2$.  This homomorphism was defined by Castel in a slightly different context \cite[Proposition 14.1]{castel_thesis}.  It is perhaps not surprising that such homomorphisms exist since the quotient of $B_3$ by its center is the amalgamated free product $\Z/4 *_{\Z/2} \Z/6$.

By post-composing the exceptional homomorphism $\B_4 \to \B_3$ with a non-standard homomorphism $\B_3 \to \B_m$, we obtain non-standard homomorphisms $\B_4 \to \B_m$ for $m \geq 3$.  

Despite the existence of the exceptional homomorphism $\B_4 \to \B_3$, we are still able to completely classify all homomorphisms $\B_4 \to \B_4$ (hence all homomorphisms $\B_4 \to \B_3$) by extending our methods to this case; see Theorem~\ref{thm:castel4} in Section~\ref{sec:castel}.  In light of this result, it would make sense to refer to the exceptional homomorphism $\B_4 \to \B_3$ as standard.

\p{Homomorphisms of the commutator subgroup}  The commutator subgroup $\B_n'$ of $\B_n$ is the kernel of the abelianization $L : \B_n \to \Z$.  There is an inclusion map $i : \B_{n-2} \to \B_n'$ given by $\sigma_i \mapsto \sigma_i \sigma_{n-1}^{-1}$ for $i \in \{1,\dots,n-3\}$.  A homomorphism 
$\rho : \B_n' \to \B_{2n-5}$ thus induces a homomorphism $\rho \circ i : \B_{n-2} \to \B_{2n-5}$.  By Theorem~\ref{thm:main}, the latter is equivalent to the trivial map or the inclusion map; this gives a strong restriction on $\rho$.    Also, since $\B_n'$ is perfect, all transvections of homomorphisms of $\B_n$ are trivial.  We thus arrive at the following corollary.

\begin{corollary}
\label{cor:bnp}
Let $n \geq 7$.  If $\rho : \B_n' \to \B_{2n-5}$ is a nontrivial homomorphism, $\rho$ is almost-conjugate to the inclusion map.
\end{corollary}

We derive Corollary~\ref{cor:bnp} from Theorem~\ref{thm:main} in Section~\ref{sec:cor}.  This corollary generalizes a result of the second and third authors \cite{bnprime}, who proved for $n \geq 7$ that any nontrivial homomorphism $\B_n' \to \B_n$ is almost-conjugate to the inclusion map.  The argument in that paper uses a different approach, the theory of totally symmetric sets.  Orevkov \cite{orevkov2} later extended the classification to homomorphisms $\B_n' \to \B_n$ with $n \geq 4$.

In a 1996 preprint, Vladmir Lin asked a series of four increasingly general questions \cite[0.9.2(b)--0.9.2(e)]{linpreprint} about endomorphisms of $\B_n'$, the last being: does every automorphism of $\B_n'$ extend to an automorphism of $\B_n$?  The theorem of the second and third authors already implies that the answer is yes, and Corollary~\ref{cor:bnp} is a further extension.

Prior to all of these results, the group of automorphisms of $\B_n'$ was determined for $n \geq 4$ by Orevkov \cite{orevkov}.  An alternate proof for $n \geq 7$ was given by McLeay \cite{mcleay}.

\p{Spaces of polynomials} Let $\textrm{Poly}_n$ denote the space of monic, square-free polynomials of degree $n$ with complex coefficients.  This space is the same as the space of unordered configurations of $n$ points in the plane (the $n$ points are the roots).  As such, we have that $\pi_1(\textrm{Poly}_n)$ is isomorphic to $\B_n$.

There is a surjective map $\textrm{Poly}_4 \to \textrm{Poly}_3$ that arises in the resolution of quartic polynomials into cubic polynomials.  The induced map on fundamental groups is the exceptional homomorphism $\B_4 \to \B_3$ described above.  

By work of Lin \cite[Theorem 9.4]{lin04} and Murasugi \cite{murasugi}, a holomorphic map $\textrm{Poly}_n \to \textrm{Poly}_m$ induces a homomorphism $\B_n \to \B_m$ that sends periodic elements to periodic elements (a braid is periodic if and only if it has a central power).  Lin refers to such a homomorphism as a \emph{special homomorphism}.  We prove the following result, which generalizes a theorem of Lin.

\begin{theorem}
\label{thm:lin}
Let $n \geq 5$ and $m \geq 1$.  If $m$ and $m-1$ are both indivisible by one of $n$ or $n-1$, then all special homomorphisms $\B_n \to \B_m$ have cyclic image.
\end{theorem}

Lin proved that if $n(n-1)$ does not divide $m(m-1)$, then all special homomorphisms $B_n \to B_m$ have cyclic image \cite[Corollary 1.16]{linbp}.  Since $\gcd(n,n-1)=\gcd(m,m-1)=1$, Lin's hypothesis is stronger than the one in Theorem~\ref{thm:lin}.

We give a simple, geometric proof of Theorem~\ref{thm:lin} in Section~\ref{sec:torsion}; see Proposition~\ref{prop:numthy}(1) and Lemma~\ref{lem:cycliccyclic}.  

Our results give further information about special homomorphisms (hence about maps $\textrm{Poly}_n \to \textrm{Poly}_m$).  For instance, Theorem~\ref{thm:main} implies for $n \geq 5$ that the only special homomorphisms $\B_n \to \B_n$ are equivalent to the identity or the trivial map (this part follows from Castel's work; see below).

Also, our classification of homomorphisms $\B_4 \to \B_4$ (Theorem~\ref{thm:castel4}) shows that the only special homomorphisms $\B_4 \to \B_4$ are equivalent to the trivial map, the identity map, or the composition $\B_4 \to \B_3 \to \B_4$, where the first map is the (surjective) exceptional homomorphism and the second map is the standard inclusion.  Finally, it follows from Theorem~\ref{thm:castel4} that the only special homomorphisms $\B_4 \to \B_3$ are equivalent to either the trivial map or to the exceptional map.  We can interpret the last statement as saying that there is essentially only one way to resolve quartic polynomials into cubic polynomials.

\p{Maps to larger braid groups} Because the number of different types of standard homomorphisms $\B_n \to \B_m$ jumps from 2 to 5 when $m$ increases from $2n-1$ to $2n$ (and the total number of standard homomorphisms jumps from 2 to $\infty$), it may seem hopeless to classify all homomorphisms $\B_n \to \B_m$ when $m$ is large.  But the problem is more tractable than it seems at first: each of the 5 standard homomorphisms $\B_n \to \B_{2n}$ is built from the standard homomorphisms $\B_n \to \B_m$ with $m \leq n$, in a sense which we now explain.

In this discussion, it will be useful to regard $\B_n$ as the mapping class group of a disk $\D_n$ with $n$ marked points $\{x_1,...,x_n\}$; this is the group of connected components of the group of homeomorphisms of $\D_n$ that fix the boundary pointwise.    A \emph{multicurve} in $\D_n$ is the isotopy class of a nonempty collection of pairwise disjoint, homotopically nontrivial, non-peripheral, homotopically distinct simple closed curves in $\D_n-\{x_1,...,x_n\}$.  A map $\rho : \B_n \to \B_m$ is \emph{reducible} if there is a multicurve $M$ in $\D_m$ preserved by $\rho(\B_n)$.  

We think of a reducible homomorphism as a cabling map, and in fact we will refer to them as such in what follows.  In the standard braid picture, the multicurve $M$ traces out the boundary of a cable.  We allow for the possibility that $M$ has a single component (so that there is only one cable with multiple braid strands), for instance in the case of the inclusion map $\B_n \to \B_{n+1}$.

Our Theorem~\ref{thm:main} can be thought of as saying that every map $\B_n \to \B_{2n}$ is a cabling, and further classifying all cablings.  For example, the inclusion map has one cable with $n$ strands inside.  The diagonal and flip diagonal inclusions have two cables, each with $n$ strands inside, and the $k$-twist cablings have $n$ cables, each with two strands inside.  

One can hope to classify homomorphisms $\B_n \to \B_m$ with $m > n$ in a similar manner.  As a first step, we have the following question.

\begin{question}
\label{q}
Let $m > n$.  Is it true that all homomorphisms $\rho : \B_n \to \B_m$ with non-cyclic image are cablings?  In other words, are they all reducible?
\end{question}

If the answer to this question is yes, then one can further hope to give a recursive classification of homomorphisms $\B_n \to \B_m$ as follows.  As explained in Section~\ref{sec:pkg}, a cabling map $\rho : \B_n \to \B_m$ decomposes into a collection of homomorphisms from $\B_n$ (or the subgroups of finite index $L^{-1}(k\Z)$) to braids groups $\B_{m_i}$ with $m_i < m$.  For example, the flip diagonal inclusion map $\B_n \to \B_{2n}$ decomposes into three maps: the inclusion map $\B_n \to \B_n$, the inversion map $\B_n \to \B_n$, and the trivial map $\B_n \to \B_2$ (the latter is the exterior component, or outer level, of the flip diagonal inclusion map, described in Section~\ref{sec:pkg}).  In this way, a classification of maps $\B_n \to \B_\ell$ with $\ell < m$  (and $L_n^{-1}(k\Z) \to \B_\ell$ with $\ell < m$) yields, recursively, a classification of maps $\B_n \to \B_m$. 

Question~\ref{q} is one version of the question of whether all homomorphisms between mapping class groups are induced---in Mirzakhani's words---from ``some manipulations of surfaces'' \cite[p. 3]{AS}.  There are many results showing that homomorphisms between certain mapping class groups are induced by inclusions of surfaces; see the work of Ivanov \cite{ivanov}, Irmak \cite{irmak,irmak2}, Shackleton \cite{shack}, and Aramayona--Souto \cite{AS}.  There are many other related results, most of which can be found in the references to Ivanov's paper on MathSciNet.

Aramayona--Leininger--Souto \cite{als} gave examples of injective homomorphisms between mapping class groups of closed surfaces.  Since it is impossible for one closed surface to embed into another, these homomorphisms are of a different nature than the ones discussed in the previous paragraph; in fact, in their examples there are pseudo-Anosov mapping classes that map to multitwists.  On the other hand, the Aramayona--Leininger--Souto homomorphisms are constructed by lifting to a covering space, which is indeed a manipulation of surfaces.   

One might expect that the Aramayona--Leininger--Souto construction could be used to produce homomorphisms $\B_n \to \B_m$ where pseudo-Anosov elements map to multitwists.  This seems unlikely, however, because many of the branched covers of $\D_n$ have positive genus.

\p{Prior results} An early precursor to Theorem~\ref{thm:main} is a theorem of Artin from 1947, which states that any homomorphism from $\B_n$ to the symmetric group $S_n$ with transitive image is either cyclic or is conjugate to the standard projection \cite{artinbp}.  In 1996, Lin \cite[Theorem~F]{linbp} classified all homomorphisms $\B_n \to S_{2n}$ with transitive image (they all arise from our standard homomorphisms $\B_n \to \B_{2n}$).  Using the classification of homomorphisms $\B_n \to S_{k}$ with $k < n$, he further proved \cite[Theorem A]{linbp} that every homomorphism $\B_n\to \B_k$ has cyclic image provided $k<n$. 

Dyer--Grossman \cite[Theorem 19]{dg} proved in 1981 that $\Aut(\B_n) \cong \B_n/Z(B_n) \rtimes \Z/2$, answering a question of Artin (Pietrowski and Solitar \cite{ps} previously proved this for $n \leq 4$ and conjectured the more general statement).  Bell and the third author proved in 2006 that every injective homomorphism $\B_n \to \B_{n+1}$ is equivalent to the standard inclusion \cite[Main Theorem 2]{bellmargalit}.  Castel improved on this result by showing that every homomorphism $\B_n \to \B_{n+1}$ with non-cyclic image is of the same form \cite[Theorem 4(iii)]{castel}.  

There are many other works describing various types of maps between various types of braid groups; see, for instance, the work of An \cite{an}, Bardakov \cite{bardakov}, Bell and the third author \cite{bm2}, Childers \cite{childers}, Cohen \cite{cohen}, Irmak--Ivanov--McCarthy \cite{iim}, Leininger and the third author \cite{lm}, McLeay \cite{mcleay}, Orevkov \cite{orevkov}, Zhang \cite{zhang}, and the first author \cite{chen}.

\p{Idea of the proof} The overarching strategy for the proof of Theorem~\ref{thm:main} is to classify homomorphisms $\B_n \to \B_m$ inductively.  The base case $m=n$ is due to Castel (as mentioned above, this is \cite[Theorem 4(iii)]{castel}).  We give a new, relatively short proof of Castel's theorem.  Also, we improve the hypothesis of Castel's theorem from $n \geq 6$ to $n \geq 5$.  As in the statement of Theorem~\ref{thm:main}, this bound is sharp because of the surjective homomorphism $\B_4 \to \B_3$.  

For both the base case and the inductive step, we analyze a homomorphism $\B_n \to \B_m$ by considering the various possibilities for the images of the periodic elements of $\B_n$.  The braid group is torsion free; the periodic elements of $\B_n$ are defined to be the ones that are periodic in the sense of the Nielsen--Thurston classification.  

In the arguments, we focus on a specific periodic element $\alpha_1 \in \B_n$; this corresponds to rotation of $\D_n$ by $2\pi/n$.  According to the Nielsen--Thurston classification theorem, there are three possibilities for the image of $\alpha_1$ under a homomorphism $\rho : \B_n \to \B_m$: it can be either pseudo-Anosov, periodic, or reducible.  We treat the three cases in turn.  We give a more detailed description of our approach in Section~\ref{sec:overview}.

Our approach in this paper stands in contrast to the works of Ivanov \cite{ivanov}, Ivanov--McCarthy \cite{im}, Bell and the third author \cite{bellmargalit}, and Aramayona--Souto \cite{AS}, where homomorphisms between mapping class groups are understood by considering the different possibilities for the image of a Dehn twist.  The approach of using periodic elements has, on the other hand, also been used in the theory of mapping class groups, for instance in the work of Harvey--Korkmaz \cite{HK}, of Tchangang \cite{tchangang}, and of Lanier and the first author \cite{chenlanier}.

\subsection*{Acknowledgments} The authors would like to thank Justin Lanier and Nick Salter for helpful comments and conversations.  We would also like to thank the participants of the PATCH seminar in Philadelphia for a lively discussion that helped with the writing.  We are especially grateful to Benson Farb for several helpful discussions, and for pointing out the connection between our work and the resolution of quartic polynomials.  Finally, the authors are grateful to two anonymous referees for many corrections and comments that greatly improved the paper.


\section{Setup and Overview}
\label{sec:overview}

We give here an overview of the paper, and also introduce some notation and ideas that will be used throughout.  In this paper we make extensive use of the Nielsen--Thurston classification for mapping class groups and the related theory of canonical reduction systems; see \cite[Chapter 13]{farbmargalit}.

\p{Outline} As per the introduction, we prove Theorem~\ref{thm:main} by classifying maps $\B_n \to \B_m$ inductively.  Along the way, we work with the group of even braids $\B_n^2$.  This is the subgroup of $\B_n$ generated by all squares of all elements, or alternatively the kernel of the mod 2 abelianization homomorphism $\B_n \to \Z/2$.  Our induction has three steps.

\medskip

\noindent \emph{Base case.} A classification of homomorphisms $\B_n \to \B_n$.

\smallskip

\noindent \emph{Extension of the base case.} A classification of homomorphisms $\B_n^2 \to \B_n$.

\smallskip

\noindent \emph{Inductive step.} A classification of homomorphisms $\B_n \to \B_m$ with $n < m \leq 2n$.  

\medskip

As above, the base case is a theorem of Castel.  As discussed in the introduction, for all three steps we analyze a homomorphism from $\B_n$ to $\B_m$ by considering the various possibilities for the images of the periodic elements of $\B_n$.  Again, an element is periodic if it is periodic in the sense of the Nielsen--Thurston classification.  Equivalently, an element of $\B_n$ is periodic if its image has finite order in $\bar \B_n$, the quotient of $\B_n$ by its center $Z(\B_n)$. 

The quotient $\bar \B_n$ is isomorphic to a subgroup of the mapping class group of a sphere $S_{0,n+1}$ with $n+1$ marked points, namely, the subgroup consisting of elements that fix a distinguished marked point $p$.  Given a homomorphism $\rho : \B_n \to \B_m$ we will often consider the associated homomorphism $\bar \rho : \B_n \to \bar \B_m$, which is the post-composition of $\rho$ with the projection $\B_m \to \bar \B_m$.  

We specifically utilize two particular periodic elements $\alpha_1 = \sigma_{n-1} \cdots \sigma_2 \sigma_1$
 and $\alpha_2 = \sigma_{n-1} \cdots \sigma_2\sigma_1^2$ (again composing braids right to left).  The elements  $\alpha_1^n$ and $\alpha_2^{n-1}$ are equal; we denote this element by $z$.  The element $z$ generates $Z(\B_n) \cong \Z$.  We denote by $\bar \alpha_k$ the image of $\alpha_k$ in $\bar \B_n$, so $\bar \alpha_1$ is a rotation of $S_{0,n+1}$ by $2\pi/n$ and $\bar \alpha_2$ is a rotation by $2\pi/(n-1)$.  

The remainder of the paper is organized into three parts.  The first part is setup, the second part is the proof of Castel's theorem (the base case), and the third part is the rest of the proof of Theorem~\ref{thm:main}, including the extension of the base case and the inductive step.

\p{Part I: Setup} In Sections~\ref{sec:pkg} and~\ref{sec:tv} we introduce two basic notions that are used throughout the paper.  More specifically, in Section~\ref{sec:pkg} we describe the stabilizer in the braid group of an unnested multicurve.  We give a semi-direct product structure on the latter that we call the interior/exterior decomposition.  There are two versions, Lemmas~\ref{lem:pkg} and~\ref{lem:pkg2}.  Then in Section~\ref{sec:tv} we prove that the relation from the introduction is an equivalence relation (Proposition~\ref{prop:eq}) and also prove the characterizations of transvecting elements from the introduction (Proposition~\ref{prop:tv}).  

\p{Part II: Proof of Castel's theorem} In Sections~\ref{sec:cake} through~\ref{sec:castel} we prove (and extend) Castel's theorem, which characterizes all homomorphisms $\B_n \to \B_n$.  The first three of these sections introduce some of the key tools that are used for both Castel's theorem and our inductive step.  First, in Section~\ref{sec:cake} we give a combinatorial-topological lemma about curves and rotations.  There are two versions, Propositions~\ref{prop:cake1} and~\ref{prop:cake2}.  Then in Section~\ref{sec:torsion} we state and prove Proposition~\ref{prop:numthy}, which gives number-theoretic restrictions on the image of a periodic element of $\B_n$ in $\bar \B_m$.  Finally in Section~\ref{sec:pA} we show in Proposition~\ref{prop:torsiontopA} that if $\rho(z)$ is pseudo-Anosov, then $\bar \rho$ has cyclic image.  

In Section~\ref{sec:castel} we combine the above tools with a result of Lin in order to prove our extension of Castel's theorem.  The argument proceeds as follows.  Let $\rho : \B_n \to \B_n$ be a homomorphism and let $\bar \rho$ be the associated homomorphism to $\B_n \to \bar \B_n$.  As in the introduction, there are three possibilities for $\bar \rho (\alpha_1)$ under a homomorphism $\rho : \B_n \to \B_m$: it can be either pseudo-Anosov, periodic, or reducible.  

When $\bar \rho(\alpha_1)$ is pseudo-Anosov, we show that $\rho$ has cyclic image.  The idea is that if $\bar \rho(\alpha_1)$ is pseudo-Anosov, then its power $\bar \rho(z)$ is also pseudo-Anosov.  Since $\bar \rho(\B_n)$ is contained in the centralizer of $\bar \rho(z)$, and since the centralizers of pseudo-Anosov mapping classes are completely understood (by work of McCarthy), we can conclude that $\bar \rho$ has abelian (hence cyclic) image, and hence $\rho$ has cyclic image.  The details of this argument are given in Section~\ref{sec:pA}.

When $\bar \rho(\alpha_1)$ is periodic we show that $\rho$ either has cyclic image or is equivalent to the identity homomorphism.  The argument proceeds as follows.  We show in Section~\ref{sec:torsion} that if $\bar \rho(\alpha_1)$ is periodic and $\rho$ has non-cyclic image then (up to replacing $\rho$ by an equivalent homomorphism) $\bar \rho(\alpha_1)$ generates $\langle \bar \alpha_1 \rangle$, that is, $\bar \rho(\alpha_1) =  \bar \alpha_1^k$ with $\gcd(k,n)=1$.  To give the idea, suppose that $\bar \rho(\alpha_1) = \bar \alpha_2$ (which certainly does not generate $\langle \bar \alpha_1 \rangle$).  In this case, 
\[
\bar \rho(\alpha_1^{n-1}) = \bar \alpha_2^{n-1} = 1.
\]
But the normal closure of $\alpha_1^{n-1}$ in $\B_n$ contains the commutator subgroup (this fact is an instance of the well-suited curve criterion of Lanier and the third author \cite{laniermargalit}).  It follows that $\bar \rho$, hence $\rho$, has cyclic image.  

Continuing with the case where $\bar \rho(\alpha_1)$ is periodic, we show that if $\rho$ has non-cyclic image, then $\bar \rho(\alpha_1)$ is equal to $\bar \alpha_1^{\pm 1}$.  To do this we consider the interaction between $\bar \rho(\alpha_1)$ and the canonical reduction system $M$ of $\bar \rho(\sigma_1)$.  Since $\alpha_1^k \sigma_1 \alpha_1^{-k}$ commutes with $\sigma_1$ for $2 \leq k \leq n-1$, it must be that the image of $M$ under $\bar \rho(\alpha_1)^k$ is disjoint from $M$ for such $k$.  The combinatorial topology lemma in Section~\ref{sec:cake} then implies $\bar \rho(\alpha_1) = \bar \alpha_1^{\pm 1}$.  We then leverage this equality  in order to conclude that $\rho$ is equivalent to the identity homomorphism.  

If $\rho(\alpha_1)$---equivalently, $\bar \rho(\alpha_1)$---is reducible, this means that $\rho(\alpha_1)$ preserves a multicurve $M$ in $\D_n$.  It follows that $\rho(z)$, hence all of $\rho(\B_n)$, preserves $M$.  It must be that $M$ has fewer than $n$ components, and each component contains fewer than $n$ marked points in its interior.  We can can apply the interior/exterior decomposition from Section~\ref{sec:pkg} to decompose $\rho(\B_n)$ into a semi-direct product of braid groups of smaller index, and use induction to show that $\rho$ has cyclic image.

After proving (our extension of) Castel's theorem we give the classification of homomorphisms $\B_4 \to \B_4$ at the end of Section~\ref{sec:castel2}.  The proof follows the same outline as the one we use for Castel's theorem.

\p{Part III: Proof of the main theorem} Sections~\ref{sec:castel2} through~\ref{sec:proof} comprise the proof of Theorem~\ref{thm:main}, starting from the base case.  As above, we begin in Section~\ref{sec:castel2} by proving our extension of the base case, Theorem~\ref{thm:castel2}, which is a classification of homomorphisms $\B_n^2 \to \B_n$.  The proof follows the same outline as the proof of Castel's theorem.  We remark that it is possible to derive Castel's theorem from the corresponding theorem for $\B_n^2$; we prove Castel's theorem first because it is simpler and is of more independent interest.

In Section~\ref{sec:cabling} we study 2-fold cabling maps.  These are maps $\rho : \B_n \to \B_{2n}$ where $\rho(\B_n)$ preserves a collection of $n$ disjoint curves, each surrounding two marked points, and where the induced action of $\B_n$ on these curves is (conjugate to) the standard map $B_n \to S_n$.  We prove in Proposition~\ref{prop:cabling} that every 2-fold cabling map is equivalent to one of the standard $k$-twist cabling maps.

We prove Theorem~\ref{thm:main} in Section~\ref{sec:proof} by giving the inductive step.  This step again follows the same approach as the proof of Castel's theorem.  Given $\rho : \B_n \to \B_m$ with $n < m \leq 2n$ we consider the three possibilities for $\bar \rho(\alpha_1)$.  If $\bar \rho(\alpha_1)$ is pseudo-Anosov or periodic then as before we conclude that $\rho$ has cyclic image.  In fact this case is easier than the case $m=n$ since we can more quickly rule out that $\bar \rho(\alpha_1)$ is a power of $\bar \alpha_1$.

The case where $\bar \rho(\alpha_1)$ is reducible is the most subtle part of the proof.  In this case we conclude as before that $\rho(\B_n)$ preserves a multicurve $M$.  There are then a number of subcases, according to the topological type of $M$ and the action (through $\rho$) of $\B_n$ on the set of components.  We may consider the latter action as a homomorphism $\B_n \to \Sigma_k$, where $k$ is the number of components of $M$. 

One subcase is where $M$ consists of $n$ curves surrounding two marked points each and the resulting homomorphism $\B_n \to \Sigma_n$ is the standard homomorphism (up to conjugacy).  In this case $m=2n$ and $\rho$ is a 2-fold cabling map, and so we may apply Proposition~\ref{prop:cabling}.

Another subcase is where $M$ consists of a single curve surrounding more than $n$ marked points; in this case we may restrict to the interior of $M$ and apply induction.  

The most difficult case, which only arises when $m=2n$, is where $M$ consists of two curves surrounding $n$ marked points each and where $\B_n$ acts nontrivially on the components of $M$; in this case $\B_n^2$ acts trivially on the components of $M$ and so we may apply Theorem~\ref{thm:castel2} to show that $\rho$ is equivalent to the diagonal embedding.  

We end the paper in Section~\ref{sec:cor} by proving Corollary~\ref{cor:bnp} as a consequence of Theorem~\ref{thm:main}.


\section{The interior/exterior decomposition}
\label{sec:pkg}

In this section we introduce a basic tool for handling cablings $\rho : \B_n \to \B_m$.  As in the introduction, the cabling maps are exactly the ones that are reducible in the sense that there is some multicurve $M$ preserved by $\rho(\B_n)$.  We first address the case where each component of $M$ is fixed, and then the more general case.

\subsection{Braids fixing a multicurve}\label{sec:fix} Let $M = \{ c_1, \dots, c_k \}$ be an un-nested multicurve in $\D_m$.  We say that $\D_m$ is \emph{standard} if it is a convex disk in the Euclidean plane and the $m$ marked points lie on a horizontal line.  We further say that $M$ is \emph{standard} if it lies in a standard $\D_m$ and all of the $c_i$ are convex curves in $\D_m$.  The reason for these definitions is that they give an (almost) canonical identification between the mapping class group description of the braid group and the description in terms of braid diagrams, and hence in terms of the standard presentation of the braid group.  Indeed, a mapping class of $\D_m$ induces an element of the fundamental group of the (unordered) configuration space $\Conf(\R^2,m)$.  And then by tracing out this loop in $\R^2 \times [0,1]$ and projecting to $(\R \times \{0\}) \times [0,1]$ (and keeping track of the crossing information) we obtain a braid diagram. 

For any un-nested multicurve $M$, let $\Fix_{\B_m}(M)$ denote the subgroup of $\B_m$ given by the intersection of the stabilizers of the $c_j$ for $i\in\{1,...,k\}$.  We will describe two homomorphisms $\Pi_i^M$ and $\Pi_e^M$, the \emph{interior and exterior maps}, with domain $\Fix_{\B_m}(M)$.  We first describe the maps for $M$ standard and then in the general case.  

Suppose now that $M$ is standard.  Say that each $c_j$ surrounds exactly $p_j$ marked points, and let $p = p_1 + \cdots + p_k$.   Let $\Delta_j$ denote the closed disk bounded by $c_j$, and let $\Delta = \cup \Delta_j$.  Let $\D_m^\Delta$ denote the disk obtained from $\D_m$ by collapsing each $\Delta_j$ to a point.  We consider the latter as a disk with $m-p+k$ marked points, with each marked point coming from a $\Delta_j$ or a marked point of $\D_m$ not in the interior of any $\Delta_j$.  In fact, $\D_m^\Delta$ can be naturally identified with a standard disk $\D_{m-p+k}$ because the $\Delta_j$ are convex. 

There are homomorphisms
\begin{align*}
\Pi_i^M &: \Fix_{\B_m}(M) \to \B_{p_1} \times \cdots \times \B_{p_k}, \text{ and} \\
\Pi_e^M &: \Fix_{\B_m}(M) \to \B_{m-p+k},
\end{align*}
described as follows.  The $j$th component of the map $\Pi_i^M$ is obtained by forgetting all marked points not in $\Delta_j$.  The map $\Pi_e^M$ is the one induced by the quotient map $\D_m \to \D_m^\Delta$.  We can describe $\Pi_e^M$ in terms of braid diagrams.  Specifically, we collapse---for each $j$---the strands from a single $\Delta_j$ into a single strand; this results in a well-defined braid diagram since (under the above identification) the strands from a single $\Delta_j$ travel alongside each other.  The map $\Pi_i^M$ also has an elementary description in terms of the standard generators of the braid group, where each $\sigma_i$ maps to either the identity or some $\sigma_j$; we leave this description to the reader.

We now give the definitions of $\Pi_i^M$ and $\Pi_e^M$ for an arbitrary un-nested multicurve.  Let $M$ be such a multicurve in $\D_m$.  Let $g \in \B_m$ be a braid so that $g(M)$ is standard.  We define
\begin{align*}
\Pi_i^M &: \Fix_{\B_m}(M) \to \B_{p_1} \times \cdots \times \B_{p_k}, \text{ and} \\
\Pi_e^M &: \Fix_{\B_m}(M) \to \B_{m-p+k},
\end{align*}
via the compositions
\begin{align*}
\Pi_i^{M} &= \Pi_i^{g(M)} \circ \alpha_g , \text{ and} \\
\Pi_e^{M} &=  \Pi_e^{g(M)} \circ \alpha_g.
\end{align*}
where $\alpha_g$ is the inner automorphism of $\B_n$ given by $h \mapsto ghg^{-1}$.  These maps are only defined up to conjugacy.

The map $\Pi_e^M$ is not surjective in general.  For $n$ and $n'$ let $\B_{n,n'}$ denote the subgroup of $\B_{n+n'}$ consisting of braids that fix a given choice of $n'$ marked points (all choices yield isomorphic groups).  The image of $\Pi_e^M$ lies in the subgroup $\B_{m-p,k} \leqslant \B_{m-p+k}$ consisting of the elements that fix the marked points corresponding to the $\Delta_j$.

\begin{lemma}
\label{lem:pkg}
Let $M = \{ c_1, \dots, c_k \}$ be an un-nested multicurve in $\D_m$.  Say that each $c_j$ surrounds exactly $p_j$ marked points, and let $p = p_1 + \cdots + p_k$.  Any choice of homomorphism
\[
\Pi_i^M \times \Pi_e^M : \Fix_{\B_m}(M) \to \left(\B_{p_1} \times \cdots \times \B_{p_k}\right) \times \B_{m-p,k}
\]
is an isomorphism.  
\end{lemma}

\begin{proof}

Since each inner automorphism $\alpha_g$ induces an isomorphism $\Fix_{\B_m}(M) \to \Fix_{\B_m}(g(M))$ we may assume without loss of generality that $M$ is standard.  We would like to define an inverse homomorphism $F$ of $\Pi_i^M \times \Pi_e^M$.  First, we claim that there is a homomorphism $\beta : \B_{m-p,k} \to \Fix_{\B_m}(M)$ that is a section of $\Pi_e^M$ (meaning $\Pi_e^M \circ \beta$ is the identity) and so that $\Pi_i^M \circ \beta$ is the trivial homomorphism.  One way to define $\beta$ is through braid diagrams: for $1 \leq j \leq k$, we replace the $j$th strand with $p_j$ parallel strands.  (A description of this homomorphism $\beta$ in terms of mapping class groups was given by Bell and the third author \cite{bm2}; to obtain $\beta$ from their map $\iota : L_k \to \overline{\textrm{Mod}}(\bar S_k)$ given in their Figure 7 we post-compose $\iota$ with the map $\overline{\textrm{Mod}}(\bar S_k) \to \B_m$ induced by inclusion of $\bar S_k$ into $\D_m$.)  

Let $\iota_j : \B_{p_j} \to \B_m$ be the inclusions induced by the inclusions $\Delta_j \to \D_m$.  Again, these maps can be also described with braid diagrams and also algebraically: the generators $\sigma_1,\dots,\sigma_{p_j-1}$ map to $\sigma_\ell,\dots,\sigma_{\ell+p_j-2}$, respectively, where $\ell$ is the number of the first marked point in $\Delta_j$.  The product $\iota_1\times ... \times \iota_k$ is a section of $\Pi_i^M$, and the composition $\Pi_e^M\circ (\iota_1\times ... \times \iota_k)$ is the trivial homomorphism.  

We define 
\[
F : \left(\B_{p_1} \times \cdots \times \B_{p_k}\right) \times \B_{m-k,p} \to \Fix_{\B_m}(M)
\]
by
\[
F(f_1,\dots,f_k,f_e) = \iota_1(f_1)\iota_2(f_2)\cdots\iota_k(f_k)\beta(f_e).
\]
The map $F$ is a homomorphism because the images of the maps $\iota_1,\dots,\iota_k,\beta$ commute pairwise.  Because $\Pi_e^M \circ \beta$ and $\Pi_i^M \circ (\iota_1\times ... \times \iota_k)$ are the identity and $\Pi_i^M \circ \beta$ and $\Pi_e^M\circ (\iota_1\times ... \times \iota_k)$ are trivial, it follows that $(\Pi_i^M \times \Pi_e^M) \circ F$ is the identity, as desired.
\end{proof}

Given a homomorphism $\rho : \B_n \to \B_m$ and an un-nested multicurve $M$ fixed by $\rho(\B_n)$ as above, we define
\begin{align*}
\rho_i^M &: \B_n \to \B_{p_1} \times \cdots \times \B_{p_k}, \text{ and} \\
\rho_e^M &: \B_n \to \B_{m-p,k}
\end{align*}
by the formulas $\rho_i = \Pi_i^M \circ \rho$ and $\rho_e = \Pi_e^M \circ \rho$.  We refer to these as the \emph{interior} and \emph{exterior} components of $\rho$.  Again, these maps are only well defined up to conjugacy.


\subsection{Braids preserving a multicurve} Let $M = \{ c_1, \dots, c_k \}$ be a multicurve in $\D_m$.  Suppose that each $c_j$ surrounds exactly $p$ marked points, and suppose that $kp=m$.  These conditions imply that every marked point in $\D_m$ lies in the interior of exactly one $c_j$ and in particular that the $c_j$ are un-nested.  Let $\Stab_{\B_m}(M)$ denote the subgroup of $\B_m$ consising of elements that preserve $M$.  We will give a semi-direct product decomposition of  $\Stab_{\B_m}(M)$ that is analogous to the direct product decomposition for $\Fix_{\B_m}(M)$ given in Lemma~\ref{lem:pkg}.  To this end we will define maps $\Pi_e^M$ and $\Pi_i^M$ for a standard multicurve $M$, with the general maps obtained from these in the same way as before.

Consider the homomorphism
\begin{align*}
\Pi_e^M &: \Stab_{\B_m}(M) \to \B_{k},
\end{align*}
given by collapsing to points the disks $\Delta_j$ bounded by the $c_j$.  It follows from the assumptions on the $c_j$ that $\Pi_e^M$ is surjective.  The kernel of $\Pi_e^M$ is the product of the braid groups corresponding to the interiors of the $c_j$ (this follows from \cite[Theorem 3.18]{farbmargalit}).  We thus have a short exact sequence 
\[
1 \to \displaystyle\prod_{i=1}^k B_p \to \Stab_{\B_m}(M) \to \B_k \to 1.
\]
There is a splitting $\B_k \to \Stab_{\B_m}(M)$ given by the map $\beta$ from the proof of Lemma~\ref{lem:pkg}.  The action of $\B_k$ on $\prod B_p$ factors through the standard map $\B_k \to S_k$ and is given by permutation of the factors.  We thus have the following lemma.

\begin{lemma}
\label{lem:pkg2}
Let $M = \{ c_1, \dots, c_k \}$ be an un-nested multicurve in $\D_m$.  Say that each $c_j$ surrounds exactly $p$ marked points, and suppose $m=kp$.  There is a split short exact sequence
\[
1 \to \displaystyle\prod_{i=1}^k B_p \to \Stab_{\B_m}(M) \stackrel{\Pi_e^M}{\to} \B_k \to 1.
\]
In particular we have an isomorphism
\[
\Stab_{\B_m}(M) \cong \B_k \ltimes \displaystyle\prod_{i=1}^k B_p,
\]
where $\B_k$ acts by permuting the factors of $\prod B_p$ according to the standard map $\B_k \to S_k$.  
\end{lemma}

Lemma~\ref{lem:pkg} is a consequence of Lemma~\ref{lem:pkg2}.  It is possible to further generalize Lemma~\ref{lem:pkg2} to the case where $M$ is an arbitrary un-nested multicurve, but we will not need this more general statement.

\section{Transvections and equivalence}
\label{sec:tv}

In Section~\ref{sec:eq} we prove that the relation defined in the introduction is indeed an equivalence relation (Proposition~\ref{prop:eq}) and we also prove our characterization of transvecting elements from the introduction (Proposition~\ref{prop:tv}).  Then in Section~\ref{sec:cent} we give complete descriptions of the centralizers of the images of the standard homomorphisms $\B_n \to \B_{2n}$ (Proposition~\ref{prop:transv}).  We then use this to prove our claim from the introduction that a transvection of a standard homomorphism is a central transvection (Corollary~\ref{cor:central}).  

 
\subsection{Equivalence}
\label{sec:eq}

Let $\rho_0$ and $\rho_1$ be two homomorphisms $\B_n \to G$.  As in the introduction, we define a relation by the rule that $\rho_0 \sim \rho_1$ if and only if there is a transvecting element $t \in G$ for $\rho_0$ and an automorphism $A \in \Aut G$ so that $\rho_1 = A \circ \rho_0^t$.

To prove that this relation is an equivalence relation, we require two facts.  Let $\rho_0$ and $\rho_1$ be  homomorphisms $\rho_0,\rho_1 : \B_n \to G$, let $t$ be a transvecting element for $\rho_1$, let $A \in \Aut G$, and let $u$ be a transvecting element for $A \circ \rho_0$.  We have
\begin{enumerate}
\item $\rho_0 = \rho_1^t \Longleftrightarrow \rho_0^{t^{-1}} = \rho_1$, and
\item $(A \circ \rho_0)^u = A \circ \rho_0^{A^{-1}(u)}$.
\end{enumerate}
Implicit in these two statements are that $t^{-1}$ and $A^{-1}(u)$ are transvecting elements for $\rho_0$.  These are indeed transvecting elements because the hypotheses tell us that $\rho_1^t$ and $(A \circ \rho_0)^u$ are homomorphisms and the maps $\rho_0^{t^{-1}}$ and $A \circ \rho_0^{A^{-1}(u)}$ define the inverse homomorphisms.  Since $A$ is an automorphism, $\rho_0^{A^{-1}(u)}$ is also a homomorphism as desired.  The following proposition is a straightforward consequence of the two facts.

\begin{proposition}
\label{prop:eq}
The relation between homomorphisms $\B_n \to G$ defined by $\sim$ is an equivalence relation.
\end{proposition}

\begin{proposition}
\label{prop:tv}
Let $n \geq 5$ and let $\rho : \B_n \to G$ be a homomorphism.  Then the following are equivalent:
\begin{enumerate}
\item $t \in G$ is a transvecting element for $\rho$,
\item $\rho(\sigma_i) t \rho(\sigma_i)^{-1}$ is independent of $i$, and
\item $t$ centralizes $\rho(\B_n')$.
\end{enumerate}
\end{proposition}

\begin{proof}

We begin with the proof that $(1) \Rightarrow (2)$.  So suppose that $t$ is a transvecting element for $\rho$.  Assume first that $|i-j|>1$. Applying $\rho^t$ to the equation $1=[\sigma_i,\sigma_j]$ we have
\begin{align*}
1 & = [\rho(\sigma_i) t, \rho(\sigma_j) t] \\
& = \rho(\sigma_i) t \rho(\sigma_j) tt^{-1} \rho(\sigma_i)^{-1} t^{-1} \rho(\sigma_j)^{-1} \\
& = \rho(\sigma_i) t \rho(\sigma_j) \rho(\sigma_i)^{-1} t^{-1} \rho(\sigma_j)^{-1} \\
& = \rho(\sigma_i) t \rho(\sigma_i)^{-1} \rho(\sigma_j) t^{-1} \rho(\sigma_j)^{-1},
\end{align*}
as desired.  For the case $j=i+1$, we use the fact that $n \geq 5$.  As such, there is a $k_1$ and $k_2$ so that the sequence $i=k_0,k_1,k_2,k_3=j$ satisfies $|k_{\ell+1} - k_\ell | > 1$ for all $\ell$ and so this case follows from the previous.

For the proof that $(2) \Rightarrow (1)$, we assume that $\rho(\sigma_i) t \rho(\sigma_i)^{-1}$ is independent of $i$, and we need to check that the rule
\[
\sigma_i \mapsto \rho(\sigma_i) t
\]
defines a homomorphism $\B_n \to G$.  The (reverse of the) calculation from the forward direction shows that the images of the $\sigma_i$ satisfy the commutator relations in the standard presentation for the braid group.  It remains to check that they satisfy the braid relations, that is,
\[
\rho(\sigma_i) t \rho(\sigma_{i+1}) t \rho(\sigma_i) t = \rho(\sigma_{i+1}) t \rho(\sigma_i) t \rho(\sigma_{i+1}) t 
\]
for all $i$.  

We claim that $\rho(\sigma_i)^{-1} t \rho(\sigma_i)$ is independent of $i$.  Using the hypothesis that $\rho(\sigma_i) t \rho(\sigma_i)^{-1}$ is independent of $i$ we have
\[
\rho(\sigma_1)t\rho(\sigma_1)^{-1} = \rho(\sigma_3)t\rho(\sigma_3)^{-1}.
\]
Conjugating both sides by $\rho(\sigma_1)^{-1}\rho(\sigma_3)^{-1}$ and using the fact that $\rho(\sigma_1)$ and $\rho(\sigma_3)$ commute gives that $\rho(\sigma_1)^{-1} t \rho(\sigma_1)=\rho(\sigma_3)^{-1} t \rho(\sigma_3)$.  As above, this claim follows since $n \geq 5$.  

Let $X = \rho(\sigma_i)^{-1} t \rho(\sigma_i)$.  This is well defined by the previous claim.  We now claim that $\rho(\sigma_i)^{-1}X\rho(\sigma_i)$ is independent of $i$.  Indeed, we have
\begin{align*}
\rho(\sigma_1)^{-1}X\rho(\sigma_1) &= \rho(\sigma_1)^{-1} \rho(\sigma_3)^{-1} t \rho(\sigma_3) \rho(\sigma_1) \\
&= \rho(\sigma_3)^{-1} \rho(\sigma_1)^{-1} t \rho(\sigma_1) \rho(\sigma_3) \\
&= \rho(\sigma_3)^{-1}X\rho(\sigma_3)
\end{align*}
Again, this claim follows since $n \geq 5$.  

Using the previous two claims and the braid relation we have
\begin{align*}
\rho(\sigma_i) t \rho(\sigma_{i+1}) t \rho(\sigma_i) t &= \rho(\sigma_i) \rho(\sigma_{i+1}) X \rho(\sigma_i) X t \\
&=  \rho(\sigma_i) \rho(\sigma_{i+1}) \rho(\sigma_i)(\rho(\sigma_i)^{-1}X\rho(\sigma_i)) X t \\
&=  \rho(\sigma_{i+1}) \rho(\sigma_i) \rho(\sigma_{i+1})(\rho(\sigma_{i+1})^{-1}X\rho(\sigma_{i+1})) X t \\
&=  \rho(\sigma_{i+1}) \rho(\sigma_i) X\rho(\sigma_{i+1}) X t \\
&=  \rho(\sigma_{i+1}) t \rho(\sigma_i) t \rho(\sigma_{i+1}) t,
\end{align*}
as desired.

We now show that $(2) \Rightarrow (3)$.  As in the proof of $(2) \Rightarrow (1)$, the element $X$ is well defined.  In particular we have
\[
\rho(\sigma_i)^{-1} t \rho(\sigma_i) = \rho(\sigma_1)^{-1} t \rho(\sigma_1)
\]
for all $i$, and so
\[
(\rho(\sigma_1)\rho(\sigma_i^{-1})) t (\rho(\sigma_1)\rho(\sigma_i)^{-1})^{-1} = t
\]
for all $i$.  As in the proof of Lemma~\ref{lem:emptyCRS2}, the $\sigma_1\sigma_i^{-1}$ generate $\B_n'$ for $n \geq 5$, and so the desired implication $(2) \Rightarrow (3)$ follows.  Reversing the calculation for $(2) \Rightarrow (3)$ gives that $(3) \Rightarrow (2)$.  
\end{proof}


\subsection{Centralizers of images of standard maps}
\label{sec:cent}

We now turn to the descriptions of the centralizers of the images of the standard homomorphisms $\B_n \to \B_{2n}$ (Proposition~\ref{prop:transv}). Then we prove Corollary~\ref{cor:central}, which is a refinement of the equivalence relation in the case where one of the homomorphisms is standard.

We require some notation.  Let $\Delta_0$ and $\Delta_1$ be the standard disks in $\D_{2n}$ surrounding the first $n$ and last $n$ marked points, respectively.  The (the isotopy class of) $\Delta_0$ is the support of the image of the standard inclusion map $\B_n \to \B_{2n}$ and $\Delta_0 \cup \Delta_1$ is the support of the image of the diagonal inclusion map.  Let $d_0$ and $d_1$ denote the boundaries of these disks, let $M$ be the multicurve $\{d_0,d_1\}$ and let
\[
\Pi_e^M : \Stab_{\B_{2n}}(M) \to \B_2
\]
be the associated exterior map.  Let $c_1,\dots,c_{2n-1}$ be the standard curves in $\D_{2n}$, so that $H_{c_i} = \sigma_i$ for all $i$.  Let $\tilde \sigma$ denote an element of $\B_{2n}$ with the following properties:
\begin{enumerate}
\item $\tilde \sigma$ interchanges $c_i$ with $c_{n+i}$ for $1 \leq i \leq n$, and
\item $\Pi_e^M(\tilde \sigma)$ is the positive generator for $\PB_2 \cong \Z$. 
\end{enumerate}
The simplest choice of $\tilde \sigma$ is $\tau_n \cdots \tau_1$ where $\tau_i = \sigma_i \cdots \sigma_{i+n-1}$.  We have $\tilde \sigma^2 = z (T_{d_0}T_{d_1})^{-1}$.  Finally, as usual, let $z$ denote the positive generator for the center of $\B_{2n}$.  

Next, let $G_i$ denote the subgroup of $\B_{2n}$ consisting of elements supported outside of $\Delta_0$ (the Dehn twist $T_{d_0}$ lies in this group since it has a representative supported outside a representative of $\Delta_0$).  We define two other subgroup of $\B_{2n}$ as follows:
\begin{align*}
G_d = \langle \tilde \sigma, T_{d_1}, T_{d_2} \rangle \text{ and }
G_f = \langle T_{d_1}, T_{d_2}, z \rangle \cong \Z^3.
\end{align*}
The first group is isomorphic to $\Z \ltimes (\Z \times \Z)$, where the first factor of the semi-direct product acts on the second by interchanging the factors of the direct product.  The group $G_f$ is a subgroup of $G_i$ of index 2.  Finally, we define the subgroup
\[
G_k = \langle z, \sigma_1\sigma_3\cdots\sigma_{2n-1} \rangle \cong \Z^2.
\]

\begin{proposition}
\label{prop:transv}
Let $n \geq 5$ and let $\rho : \B_n \to \B_{2n}$ be a standard homomorphism.  The centralizers in $\B_{2n}$ of $\rho(\B_n)$ and $\rho(\B_n')$ are equal.  For the various choices of $\rho$, the centralizers are equal to the following groups.  
\begin{enumerate}
\item trivial map: $\B_{2n}$
\item inclusion: $G_i$
\item diagonal inclusion: $G_d$
\item flip diagonal inclusion: $G_f$
\item $k$-twist cabling map:  $G_k$
\end{enumerate}
\end{proposition}

\begin{proof}

The case of the trivial map is trivial.  For a given nontrivial standard homomorphism $\rho$, let $G$ be the corresponding group from the statement of the proposition, namely, $G_i$, $G_d$, $G_f$, or $G_k$.  In each case we have $G \subseteq C_{\B_{2n}}(\rho(\B_n))$.  Therefore, to prove the proposition, it suffices to show that $C_{\B_{2n}}(\rho(\B_n')) \subseteq G$, because then we will have a sequence of inclusions
\[
G \subseteq C_{\B_{2n}}(\rho(\B_n)) \subseteq C_{\B_{2n}}(\rho(\B_n')) \subseteq G.
\]
We now treat the four cases in turn.  In each case, $\rho$ is one of the standard maps and $Z = C_{\B_{2n}}(\rho(\B_n'))$.  


\p{Inclusion} Since $Z$ centralizes $\rho(z) = T_{d_0}$, we have that $Z$ is a subgroup of $\Stab_{\B_{2n}}(M)$ where $M = \{d_0\}$.  By the interior/exterior decomposition for $M$ (Lemma~\ref{lem:pkg2}), we have
\[
1 \to \B_n \to \Stab_{\B_{2n}}(M) \to \B_{1,n} \to 1.
\]
If we restrict to $Z$ we obtain
\[
1 \to K \to Z \to B_{1,n} \to 1.
\]
where $K$ is the centralizer of $\B_n'$ in $\B_n$.  The group $\B_{1,n}$ is generated by the image of $G_i$, and the group $K$ is generated by $T_{d_0}$, which is an element of $G_i$.  It follows that $Z=G_i$.  

\p{Diagonal inclusion} Since $Z$ centralizes $\rho(z) = T_{d_0}T_{d_1}$, we have that $Z$ is a subgroup of $\Stab_{\B_{2n}}(M)$ where $M = \{d_0,d_1\}$.  In this case Lemma~\ref{lem:pkg2} gives
\[
1 \to \B_n \times \B_n \to \Stab_{\B_{2n}}(M) \to \B_2 \to 1.
\]
Restricting this short exact sequence to $Z$ we have
\[
1 \to K \to Z \to \B_2 \to 1
\]
where $K$ is the centralizer in $\B_n \times \B_n$ of the image of the diagonal inclusion $\B_n' \to \B_n' \times \B_n$.  The group $\B_2$ in the last sequence is generated by the image of $\tilde \sigma \in Z$.  The group $K$ is $\langle T_{d_0}, T_{d_1} \rangle \cong \Z^2$.  It follows that $Z=G_d$.  

\p{Flip diagonal inclusion} Again, $Z$ lies in $\Stab_{\B_{2n}}(M)$ where $M = \{d_0,d_1\}$.  We claim the image of $Z$ under $\Pi_e^M : \Stab_{\B_{2n}}(M) \to \B_2$ is trivial.  In other words, $Z \leqslant \Fix_{\B_{2n}}(M)$.  Indeed, we have $\rho(\sigma_1) = \sigma_1\sigma_{n+1}^{-1}$.  If we conjugate this image by a braid that interchanges $d_0$ and $d_1$, then we obtain a product of two half-twists, an inverse half-twist in $\Delta_0$ and a half-twist in $\Delta_1$.  Such an element cannot equal $\sigma_1\sigma_{n+1}^{-1}$, and so we have the desired contradiction.  Given the claim, the argument proceeds as in the previous case to show that the centralizer is $G_f$.

\p{Cabling maps} Finally we treat the case of a $k$-twist cabling map.  Let $H$ denote the image of $\B_n'$, and let $Z$ be the centralizer in $\B_{2n}$ of $H$.  We claim that $Z$ preserves the set of standard curves $C = \{c_1,c_3,\dots,c_{2n-1}\}$.  Fix some $c_{2i-1} \in C$.  Since $n \geq 5$ there is a $j$ so that $\sigma_i \sigma_j^{-1}$ is a product of two half-twists.  Consider the element $\sigma_i\sigma_j^{-1} \in \B_n'$.  The canonical reduction system $M$ of $\rho(\sigma_i\sigma_j^{-1})$ consists of six curves: $c_{2i-1}$, $c_{2i+1}$,  $c_{2j-1}$, $c_{2j+1}$ and two curves that surround four marked points each.  Since each element $t$ of $Z$ commutes with $\rho(\sigma_i\sigma_j^{-1})$ it preserves the set of curves in $M$ that surround exactly two marked points, namely $\{c_{2i-1},c_{2i+1},c_{2j-1},c_{2j+1}\}$.  In particular $t(c_{2i-1}) \in C$, whence the claim.

We now have the short exact sequence
\[
1 \to \Z^n \to \Stab_{\B_{2n}}(C) \to \B_n \to 1.
\]
where the $\Z^n$ is generated by $X = \{\sigma_1,\sigma_3,...,\sigma_{2n-1}\}$.  We claim that $\ker \Pi_e^C \cap Z$ is the infinite cyclic group  generated by $\sigma_1\sigma_3\cdots\sigma_{2n-1}$.  Indeed, the group $H$ acts on $\ker \Pi_e^C$ by conjugation, and this action permutes the set $X$.  The image of $H$ in the symmetric group $\Sigma_X$ is the alternating subgroup (because $\B_n'$ surjects onto $A_n$).  The claim follows.  

We claim that the last short exact sequence restricts to $Z$ as follows:
\[
1 \to \Z \to Z \to \Z \to 1
\]
where the kernel is $\Z \cong \langle \sigma_1\sigma_3\cdots\sigma_{2n-1} \rangle$ and the cokernel is $\Z = \langle z \rangle$.  Indeed, we already determined the kernel in the previous claim.  For the cokernel, we use the fact that the image of the restriction $\Pi_e^C|H$ is $\B_n'$, and so $\Pi_e^C(Z)$ must be contained in $C_{\B_n}(\B_n') \cong \Z = \langle z \rangle$.  It follows that $Z=G_k$, as desired.
\end{proof}

The following corollary allows us to deduce the second statement of Theorem~\ref{thm:main} from the first.  

\begin{corollary}
\label{cor:central}
Let $n \geq 5$.  If a homomorphism $\rho : \B_n \to \B_{2n}$ is a transvection of a standard homomorphism $\rho_0$, then $\rho$ is a central transvection of $\rho_0$.  
\end{corollary}

\begin{proof}

Let $\rho : \B_n \to \B_{2n}$.  By Proposition~\ref{prop:tv}, an element $t \in \B_{2n}$ is transvecting for $\rho$ if and only if $t$ centralizes $\rho(\B_n')$.  By Proposition~\ref{prop:transv}, we have that $t$ centralizes $\rho(\B_n)$, as desired.
\end{proof}

Using Corollary~\ref{cor:central}, we can give a geometric description of the transvections of homomorphisms $\B_n \to \B_{2n}$.  More specifically, for $n \geq 5$, we can say that two such homomorphisms differ by a transvection if they only differ on subsurfaces of $\D_{2n}$ where $\B_n$ (through the homomorphism) acts cyclically.  We now explain this in more detail.

Suppose $\rho : \B_n \to \B_{2n}$ is a transvection of the standard homomorphism $\rho_0 : \B_n \to \B_{2n}$, say $\rho = \rho_0^u$.  By Corollary~\ref{cor:central} we have that $u$ is a centrally transvecting element for $\rho_0$.  If $v$ is a transvecting element for $\rho$, it follows that $uv$ is transvecting for $\rho_0$, and hence is centrally trasvecting for $\rho_0$ by Corollary~\ref{cor:central}.  Therefore, while $v$ is not centrally transvecting for $\rho$, it is supported on a subsurface of $\D_{2n}$ on which $\rho$ acts cyclically, namely, by powers of $u$ (this is not quite true for the diagonal inclusion, since there is a centralizing element for the image whose support is all of $\D_{2n}$, but a similar statement is true).


\section{Rotations and curves}
\label{sec:cake}

Recall that $\alpha_1$ and $\alpha_2$ are the periodic elements of $\B_n$ corresponding to rotations of $\D_n$ by $2\pi/n$ and $2\pi/(n-1)$, respectively, and that $\bar \alpha_1$ and $\bar \alpha_2$ are the images in $\bar \B_n$.  In this section we prove two facts about the interactions between the rotations $\bar \alpha_1$ and $\bar \alpha_2$ on one hand, and simple closed curves in $S_{0,n+1}$ on the other hand.  First, in Section~\ref{sec:cake1} we prove Proposition~\ref{prop:cake1}, which states that every curve $c$ in $S_{0,n+1}$ intersects its images under $\bar \alpha_1$ and $\bar \alpha_2$.  Then in Section~\ref{sec:cake2} we prove Proposition~\ref{prop:cake2}, which states that if a curve $c$ is disjoint from each $\bar \alpha_1^i(c)$ with $2 \leq i \leq n-2$ then $c$ surrounds two marked points (and similar for $\bar \alpha_2$).  

\subsection{Curves and primitive rotations}
\label{sec:cake1}

As discussed above, the goal of this section is to prove the following proposition.

\begin{proposition}
\label{prop:cake1}
Let $n \geq 3$, let $k \in \{1,2\}$, let $\epsilon \in \{\pm 1\}$, and let $c$ be an essential simple closed curve in $S_{0,n+1}$.  Then 
\[
i(c,\bar \alpha_k^\epsilon(c)) \neq 0.
\]
\end{proposition}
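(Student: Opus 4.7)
The plan is to argue by contradiction. Suppose $c$ is an essential simple closed curve in $S_{0,n+1}$ with $i(c,\bar\alpha_k^\epsilon(c))=0$; up to replacing $\bar\alpha_k$ by $\bar\alpha_k^{-1}$ we may take $\epsilon=1$. Then either $\bar\alpha_k$ preserves the isotopy class of $c$, or $c$ and $\bar\alpha_k(c)$ admit disjoint representatives on the sphere. The first step is to rule out the invariant case. Since the isotopy class of an essential simple closed curve on a punctured sphere is determined by the induced partition of the marked points, $\bar\alpha_k(c)=c$ as isotopy classes forces this partition to be invariant under the permutation of marked points induced by $\bar\alpha_k$. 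For $k=1$ this permutation fixes $p$ and cyclically permutes the remaining $n$ marked points; for $k=2$ it additionally fixes one of the $m_i$ and cycles the other $n-1$. A direct case check on the invariant partitions shows that none places at least two non-$p$ marked points on each side, so no such essential curve exists.

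For the disjoint case, $c$ and $\bar\alpha_k(c)$ cobound an annulus $A$ on the sphere; let $D$ and $D'$ be the complementary open disks with $\partial D=c$ and $\partial D'=\bar\alpha_k(c)$. Since $\bar\alpha_k$ fixes $p$, the disk $\bar\alpha_k(D)$ is bounded by $\bar\alpha_k(c)$ and contains $p$. If $p\in D$, then $D'$, which does not contain $p$, cannot equal $\bar\alpha_k(D)$; this forces $\bar\alpha_k(D)=D\cup A\cup c$ and hence the strict containment $D\subsetneq\bar\alpha_k(D)$. Iterating and using the finite order of $\bar\alpha_k$ yields an infinite strictly increasing chain of disks, a contradiction. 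The case $p\in D'$ is symmetric, so $p$ must lie in the annulus $A$.

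The remaining case $p\in A$ is the main obstacle. To attack it I would exploit the second fixed point of the rotation: for $k=2$ this is the marked fixed point $m_1$, and for $k=1$ it is the non-marked antipode of $p$ on the rotation axis. Running the chain-of-disks argument for this second fixed point forces it into $A$ as well. With both rotation poles trapped in $A$, I would then study the $\bar\alpha_k$-orbit $\{c,\bar\alpha_k(c),\bar\alpha_k^2(c),\ldots\}$: either $\bar\alpha_k^2(c)=c$, in which case $c$'s partition is $\bar\alpha_k^2$-invariant and an enhanced version of the Step 1 check for the coarser permutation $\bar\alpha_k^2$ supplies the contradiction, or $\bar\alpha_k^2(c)$ lies strictly inside $D$ or strictly inside $A$. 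In each of the latter subcases one locates $\bar\alpha_k(D')$ as a disk bounded by $\bar\alpha_k^2(c)$ not containing $p$, and this forces a strictly monotone chain of disks under the iteration of $\bar\alpha_k^2$, again contradicting finite order.

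The hardest part will be organizing this third step cleanly. The exclusion of $\bar\alpha_k^2$-invariant essential curves is delicate because $\bar\alpha_k^2$ has richer orbit structure than $\bar\alpha_k$ (for instance, when $n$ is even $\bar\alpha_1^2$ splits the non-$p$ marked points into two orbits of size $n/2$), and tracking where $\bar\alpha_k^2(c)$ sits relative to $D$, $A$, $D'$, and the two poles requires careful use of the essentiality hypothesis to rule out the exceptional configurations and close the argument.
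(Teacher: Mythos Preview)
Your approach differs substantially from the paper's. The paper fixes an $r_k$-invariant hyperbolic metric, replaces $c$ by its geodesic representative $\gamma$, chooses an essential arc $\delta$ in the interior of $\gamma$, and reduces to showing that $\delta$ and $r_k(\delta)$ cannot be disjoint; this is established by a combinatorial analysis of how $\delta$ crosses a fundamental-domain decomposition (Lemma~\ref{lemma:cake}). You instead argue directly with the curve and the fixed points of the rotation.

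There is a genuine error in your invariant case for $k=2$. Your essentiality test---``at least two non-$p$ marked points on each side''---is not the correct one: an essential curve on $S_{0,n+1}$ needs only two marked points on each side, and $p$ counts. For $k=2$ the rotation $\bar\alpha_2$ fixes $p$ and one further marked point $q$; the partition $\{p,q\}$ versus the remaining $n-1$ marked points is therefore $\bar\alpha_2$-invariant, and for every $n\ge 3$ both sides contain at least two marked points. Since isotopy classes of simple closed curves on a punctured sphere are determined by the induced partition of the marked points, the corresponding essential curve $c$ satisfies $\bar\alpha_2(c)=c$ and hence $i(c,\bar\alpha_2(c))=0$. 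So your case check fails, and this curve is in fact a counterexample to the statement as written for $k=2$; no patching of the later steps can repair this. (For $k=1$ your invariant-case argument is fine: $\bar\alpha_1$ has only $p$ as a fixed marked point, and the unique invariant partition is $\{p\}$ versus the rest, which is inessential. The paper's own argument likewise does not visibly treat the possibility $r_k(\gamma)=\gamma$.)

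Your disjoint case also needs tightening---the nested-disk argument requires specific representatives, e.g.\ geodesics in an $r_k$-invariant metric, before the strict containment ``$D\subsetneq\bar\alpha_k(D)$'' is meaningful---and your third case remains a sketch, with the hard work (the $\bar\alpha_k^2$-orbit analysis and the check for $\bar\alpha_k^2$-invariant curves) not carried out. But the decisive gap is the one above.
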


The basic idea of the proof of Proposition~\ref{prop:cake1} is that if, say, $i(c,\bar \alpha_k(c))=0$, then there exists an arc $\delta$ in $S_{0,n+1}$ that is disjoint from its image $\bar \alpha_k(\delta)$ (take $\delta$ to be any arc in the interior of $c$, that is, the complementary component not containing the distinguished marked point $p$).  We would like to show that this is impossible.  To this end, Lemma~\ref{lemma:cake} below gives a version of Proposition~\ref{prop:cake1} for arcs.  

We first require some setup that will be used throughout this section.  Let $R_1$ be the open unit disk in the Euclidean plane with $n$ marked points equally spaced on a circle centered at the origin, and let $R_2$ be the open unit disk in the Euclidean plane with $n-1$ marked points equally spaced on a circle centered at the origin and with one additional marked point at the origin.  Let $r_1$ denote the clockwise rotation of $R_1$ by $2\pi/n$ and let $r_2$ denote the clockwise rotation of $R_2$ by $2\pi/(n-1)$.  Under appropriate identifications of $R_1$ and $R_2$ with $S_{0,n+1} \! \setminus p$, the maps $r_1$ and $r_2$ correspond to $\bar \alpha_1$ and $\bar \alpha_2$, respectively.

Let $k \in \{1,2\}$.  By an \emph{arc} in $R_k$ we mean the image of an embedding $[0,1] \to R_k$ where the preimage of the set of marked points is $\{0,1\}$.  We say that an arc is \emph{essential} if it is not homotopic (through arcs) into a (small neighborhood of a) marked point.  When we say that two arcs in $R_k$ are disjoint, we mean this in the strictest possible sense: we mean that their interiors are disjoint and also that their endpoints are disjoint.  

A half-open arc in $R_k$ is a proper embedding $[0,1) \to R_k$ where the preimage of the set of marked points is $\{0\}$.  Let $\Delta$ be a collection of $n-1$ disjoint half-open arcs that decompose $R_2$ into $n-1$ fundamental domains for the action of $r_2$; see Figure~\ref{fig:wedge}.  We say that an arc $\delta$ lies in minimal position with $\Delta$ if it has the fewest number of intersections with $\Delta$ in its homotopy class.  By the bigon criterion, this is equivalent to the statement that $\delta$ forms no bigons with $\Delta$.  (It is possible to define a version of $\Delta$ for the action of $r_1$ on $R_1$, but such a $\Delta$ would not be a collection of half-open arcs.)

\begin{lemma}
\label{lemma:cake}
Let $n \geq 3$, and let $\delta$ be an essential arc in $R_2$ that lies in minimal position with $\Delta$.   Then $r_2(\delta)$ and $\delta$ are not disjoint.
\end{lemma}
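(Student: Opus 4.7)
The plan is a case analysis on the endpoints of $\delta$. First, if either endpoint of $\delta$ is the center of $R_2$, then since the center is the unique fixed point of $r_2$, the arc $r_2(\delta)$ also has the center as an endpoint, and so $\delta\cap r_2(\delta)$ contains the center. Otherwise, both endpoints of $\delta$ lie on the outer circle of marked points; write them as $p_s$ and $p_t$ (possibly equal), so that $r_2(\delta)$ has endpoints $p_{s+1}$ and $p_{t+1}$ (indices mod $n-1$). If $t\equiv s\pm 1\pmod{n-1}$, the two endpoint sets meet and we are again done.

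In the remaining case, $\delta$ is either a loop at some $p_s$ or connects $p_s$ to $p_t$ at cyclic distance at least $2$. I plan to derive a contradiction from the assumption $\delta\cap r_2(\delta)=\emptyset$ using the minimal-position hypothesis. Let $A_0$ be the initial sub-arc of $\delta$, lying in $\overline{F_s}$, from $p_s$ to the first intersection of $\delta$ with $\Delta$. After possibly replacing $\delta$ by its image under the reflection of $R_2$ through the axis containing $p_s$ and the center (this symmetry conjugates $r_2$ to $r_2^{-1}$), we may assume that $A_0$ exits $F_s$ through $d_s$. Its $r_2$-image is then the initial sub-arc of $r_2(\delta)$, running from $p_{s+1}\in F_{s+1}$ to a point on $d_{s+1}$.

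The key analysis then takes place inside $F_{s+1}$, a topological disk whose interior contains the marked point $p_{s+1}$. Every sub-arc of $\delta$ in $F_{s+1}$ has its endpoints on $d_s\cup d_{s+1}$, and the no-bigon property of minimal position with $\Delta$ forces each such sub-arc to separate $p_{s+1}$ from some portion of $\partial F_{s+1}$: indeed, a sub-arc with both endpoints on the same $d$-arc would otherwise cobound a genuine bigon with a segment of $\Delta$, and a sub-arc crossing from $d_s$ to $d_{s+1}$ always separates $F_{s+1}$ into two pieces. Since $r_2(A_0)$ starts at $p_{s+1}$ and ends on $d_{s+1}$ without crossing any sub-arc of $\delta$, its terminal point on $d_{s+1}$ must lie on the $p_{s+1}$-side of every sub-arc of $\delta$ in $F_{s+1}$. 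Combining this separation constraint with the disjointness of $\delta\cap d_{s+1}$ and $r_2(\delta)\cap d_{s+1}$ (implied by $\delta\cap r_2(\delta)=\emptyset$) should produce the desired contradiction.

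The hardest part will be executing this $F_{s+1}$-analysis rigorously. In particular, $\delta$ may enter $F_{s+1}$ multiple times (for example if $\delta$ is a loop that wraps around $p_{s+1}$ together with other marked points), and I will need to track the relative positions on $d_{s+1}$ of the endpoints of all sub-arcs of $\delta$ and $r_2(\delta)$ meeting it, and verify that these position constraints together are unsatisfiable. The loop subcase ($s=t$) and the analysis of wrap-type sub-arcs with both endpoints on a single $d$-arc will each merit particular care.
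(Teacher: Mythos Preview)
Your endpoint case-analysis is fine (though note that $\delta$ is by definition the image of an embedding of $[0,1]$, so the loop subcase does not arise). The gap is in the hard case: a purely local analysis inside $F_{s+1}$ does not produce a contradiction. Concretely, after $A_0$ exits $F_s$ through $d_s$, the next sub-arc $A_1$ of $\delta$ in $F_{s+1}$ may simply cross from $d_s$ to $d_{s+1}$ on one side of $p_{s+1}$, disjoint from $r_2(A_0)$; and if these are the only sub-arcs of $\delta$ and $r_2(\delta)$ meeting $F_{s+1}$, the single point $A_1\cap d_{s+1}$ is merely required to be distinct from the single point $r_2(A_0)\cap d_{s+1}$. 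All of the position constraints you list are then satisfied in $F_{s+1}$. The obstruction to disjointness only appears when you follow $\delta$ into further fundamental domains.

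The paper's proof does exactly this, via an induction along the sequence of sub-arcs $\delta_1,\dots,\delta_\ell$ of $\delta$ cut out by $\Delta$. Each sub-arc lies in one fundamental domain and falls into one of six combinatorial types ($1\pm$, $2\pm$, $3\pm$) according to whether it meets the marked point and which boundary arcs it meets; since $r_2$ preserves $\Delta$, the sub-arc $r_2(\delta_i)$ has the same type as $\delta_i$ and lies one domain over. Normalizing so that $\delta_1$ is type $1+$, disjointness from $r_2(\delta_1)$ forces $\delta_2$ to be type $2$ (say $2+$); disjointness from $r_2(\delta_2)$ then forces $\delta_3$ to be $2+$; and inductively every $\delta_i$ with $i\geq 2$ is type $2+$. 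But $\delta_\ell$ must be type $1$ since it terminates at a marked point---contradiction. Your separation observation in $F_{s+1}$ is essentially the first step of this induction; what is missing is the propagation.
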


\begin{proof}

To have a pair of completely disjoint arcs requires $n \geq 4$, so we assume $n \geq 4$.  Assume for the sake of contradiction that $\delta$ is an arc in $R_2$ that lies in minimal position with $\Delta$ and is disjoint from  $r_2(\delta)$.  The collection of arcs $\Delta$ cuts $\delta$ into a sequence of sub-arcs $\delta_1,\dots,\delta_\ell$.  Each $\delta_i$ lies in one fundamental domain (there may be more than one sub-arc in a fundamental domain).  

Because $\delta$ and $\Delta$ do not form any bigons, there are only 6 possibilities for each $\delta_i$ up to homotopy, where homotopies keep the endpoints of $\delta_i$ on $\Delta$ but are allowed to move the endpoints along $\Delta$.  The 6 possibilities are shown in the right-hand side of Figure~\ref{fig:wedge} (we have distorted the fundamental domains there so that they look like rectangles instead of wedges).  As in the figure, we refer to the 6 types of arcs as types $1+$, $1-$, $2+$, $2-$, $3+$, and $3-$.  We further say that an arc is of type 1 if it is of type $1+$ or $1-$, etc.

\begin{figure}
\labellist
\small\hair 2pt
\pinlabel $1+$ at 475 72.5
\pinlabel $1-$ at 200 72.5
\pinlabel $2+$ at 415 10
\pinlabel $2-$ at 270 130
\pinlabel $3+$ at 380 100
\pinlabel $3-$ at 295 40
\endlabellist
\includegraphics[scale=.65]{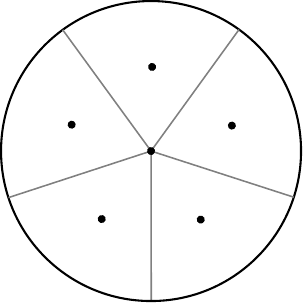} \qquad \qquad
\includegraphics[scale=.85]{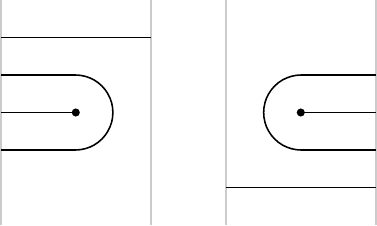}
\caption{Left: the collection of arcs $\Delta$ in the open disk $R_2$; Right: the 6 different types of arcs in a fundamental domain for $r_2$}
\label{fig:wedge}
\end{figure}

Similarly, we may divide the arcs $r_2(\delta)$ into sub-arcs along $\Delta$.  We denote the sub-arcs by $r_2(\delta)_i$.  They of course fall into the same 6 types as the sub-arcs of $\delta$.  Since $\Delta$ is invariant under $r_2$, each $r_2(\delta_i)$ is the sub-arc $r_2(\delta)_i$, and hence each $r_2(\delta)_i$ has the same type as $\delta_i$.

There are exactly two sub-arcs $\delta_i$ of type 1, namely, $\delta_1$ and $\delta_\ell$.  The proof proceeds by analyzing the possibilities for the type of $\delta_i$, beginning with $i=1$ and proceeding inductively.  

We may assume that $\delta_1$ is of type $1+$.  Indeed, if $\delta_1$ is of type $1-$, then we may interchange $\delta$ and $r_2(\delta)$ and interchange the center point of $R_2$ with the exterior puncture in order to obtain the desired situation.  We henceforth assume that $\delta_1$ and $r_2(\delta_1)$ are of type $1+$.  

The arc $\delta_2$ lies in the same fundamental domain as $r_2(\delta)_1$.  Since $\delta \cap r_2(\delta) = \emptyset$ by assumption and since the marked points at the ends of $\delta$ and $r_2(\delta)$ are distinct, it follows that $\delta_2$ must be of type 2.  We assume that it is of type $2+$; the other case is essentially the same (alternatively, we may again interchange the center point of $R_2$ with the exterior puncture).

Again, $r_2(\delta)_2$ is of type $2+$.  Arguing as before, and using only the property that $\delta \cap r_2(\delta) = \emptyset$ we see that $\delta_3$ must also be of type $2+$.  Continuing in this way inductively, we conclude that each $\delta_i$ with $i > 1$ must be of type $2+$.  This contradicts the fact that $\delta_\ell$ is of type 1.   
\end{proof}

\begin{proof}[Proof of Proposition~\ref{prop:cake1}]

We begin with some setup.  For $k \in \{1,2\}$, let $R_k$ be the disk with marked points defined above, and let $R_k^\circ$ be the surface obtained from $R_k$ by removing the marked points; this surface is homeomorphic to a sphere with $n+1$ punctures.

There is a hyperbolic metric on $R_k^\circ$ and a representative $r_k$ of $\bar \alpha_k$ that acts by isometries \cite[Theorem 7.1]{farbmargalit}.  We fix this metric once and for all and refer to it as the hyperbolic metric on $R_k^\circ$ (while the metric depends on $k$, there will be no confusion in what follows).  By a theorem of Brouwer, de K\'er\'ekjart\`o, and Eilenberg \cite{lejb,se,bdk}, any two rotations of a (punctured) sphere through a given angle are conjugate in the group of homeomorphisms, and so we may assume that the $r_k$ given here is the $r_k$ from Lemma~\ref{lemma:cake}.  

Suppose now for the sake of contradiction that $i(c,\alpha_k^\epsilon(c))=0$.  By identifying the complement of the marked points and boundary in $\D_n$ with $R_k^\circ$, it follows that there is a curve $\bar c$ in $R_k^\circ$ so that $i(\bar c,\bar \alpha_k^\epsilon(\bar c))=0$.

Let $\gamma$ be the geodesic representative of $\bar c$ in the hyperbolic metric on $R_k^\circ$.  Since $r_k$ is an isometry of this metric, and since geodesics minimize intersection within homotopy classes, it must be that $\gamma \cap r_k(\gamma) = \emptyset$. It follows that $r_k(\gamma)$ is disjoint from $\gamma$.  For $k=2$ this is impossible by Lemma~\ref{lemma:cake}. 

In the case $k=1$ it follows from the equality $\gamma \cap r_1(\gamma) = \emptyset$ that $\delta$ does not pass through the origin (otherwise $r_1(\delta) \cap \delta \neq \emptyset$ and so interior of $\gamma$ in $R_k^\circ$ intersects its image under $r_1$, implying $\gamma \cap r_1(\gamma) \neq \emptyset$).  Therefore, we may regard $\gamma$ as an arc in $R_2^\circ$ (the one with $n+2$ punctures) with the property that $\gamma \cap r_2(\gamma) = \emptyset$.  Again this is impossible by Lemma~\ref{lemma:cake}.  
\end{proof}


\subsection{Rotations and multicurves}
\label{sec:cake2}

We now proceed to the second and final result of the section.  As above the interior of a curve in $S_{0,n+1}$ is the complementary component not containing the distinguished marked point $p$.  

\begin{proposition}
\label{prop:cake2}
Let $n \geq 5$, and let $M$ be a multicurve in $S_{0,n+1}$.  Suppose that either
\begin{enumerate}
\item $n \geq 5$ and $i(M,\bar \alpha_1^i(M)) = 0$ for $2 \leq i \leq n-2$, or
\item $n \geq 6$ and $i(M,\bar \alpha_2^i(M)) = 0$ for $2 \leq i \leq n-3$.
\end{enumerate}
Then $M$ is a single curve with exactly two marked points in its interior.
\end{proposition}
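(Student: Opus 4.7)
\medskip

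The plan is to recast the disjointness hypothesis combinatorially via the correspondence between essential simple closed curves in $S_{0,n+1}$ and their interior marked-point sets. For such a curve $c$, let $S(c) \subseteq \{1,\ldots,n\}$ denote the set of marked points on the interior side; then $2 \leq |S(c)| \leq n-1$. Two such curves $c_1, c_2$ can be realized disjointly if and only if one of the following holds: (a) $S(c_1) = S(c_2)$; (b) one is strictly contained in the other; (c) $S(c_1)$ and $S(c_2)$ are contained in disjoint arcs of the circular marked-point arrangement (in particular, they are disjoint as sets). I will carry out case (1) in detail and then indicate the modifications for case (2).

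For case (1), $\bar\alpha_1$ acts as $j \mapsto j+1$ on $\{1,\ldots,n\}$. First I would analyze a single component $c \in M$ and show $|S(c)| = 2$ with $S(c)$ consecutive. Write $S = S(c)$; the hypothesis gives that $S$ and $S+i$ are compatible for each $i \in [2,n-2]$, and since $|S+i| = |S|$, option (b) is excluded, so either $S+i = S$ or (c) holds. Let $I_0 = \{i : S+i = S\} = \langle d \rangle \leq \Z/n$. If $d = n$, then disjointness of $S$ and $S+i$ for all $i \in [2,n-2]$ forces every pair of elements of $S$ to differ by $\pm 1 \pmod n$, so $|S| \leq 2$ with $S$ consecutive when $|S| = 2$. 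If $d < n$, write $S = T + \langle d \rangle$ with $T \subseteq \{0,\ldots,d-1\}$; the disjointness condition for $d \nmid i$ translates to $T \cap (T+j) = \emptyset$ in $\Z/d$ for every nonzero $j$ (each $j$ is realized by some $i$ in the range), forcing $|T| = 1$. But then $S$ is a single $\langle d\rangle$-orbit of size $n/d \geq 2$, whose minimum containing arc has length $n-d \geq n/2$; two such arcs cannot fit disjointly in a circle of length $n$, contradicting option (c) for any $i$ with $d \nmid i$.

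Next I would show $M$ is a single curve. Set $A = \{a_c : c \in M\} \subseteq \Z/n$, where $S(c) = \{a_c, a_c+1\}$. Applying the disjointness hypothesis across components: for $c,c' \in M$ and $i \in [2,n-2]$, either $a_c = a_{c'} + i$ (so $\bar\alpha_1^i(c')$ is isotopic to $c$) or the consecutive pairs $\{a_c,a_c+1\}$ and $\{a_{c'}+i, a_{c'}+i+1\}$ are cyclically disjoint, equivalently $a_c - a_{c'} - i \notin \{\pm 1\} \pmod n$. Setting $D = A - A$, this yields $D \cap \{i-1, i+1\} = \emptyset$ for each $i \in [2,n-2]$. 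The union $\bigcup_{i \in [2,n-2]} \{i-1, i+1\}$ equals $[1, n-1]$ for $n \geq 5$, so $D \cap [1,n-1] = \emptyset$ in $\Z/n$; combined with $0 \in D$, this forces $D = \{0\}$, i.e., $|A| = 1$.

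For case (2), $\bar\alpha_2$ fixes a distinguished marked point (call it $n$) and cyclically permutes $\{1,\ldots,n-1\}$. An extra preliminary step is needed: if $n \in S(c)$ for some component $c$, then $S(c) \cap \bar\alpha_2^i(S(c)) \ni n$ for every $i$, ruling out option (c), so only $S(c) + i = S(c)$ is allowed; since $\gcd(2,\ldots,n-3) = 1$ for $n \geq 6$, this forces $S(c)\setminus\{n\}$ to be empty or all of $\{1,\ldots,n-1\}$, neither yielding an essential curve. Hence $n \notin S(c)$ for every $c \in M$, and the case (1) argument runs verbatim with $\Z/n$ replaced by $\Z/(n-1)$ and the range $[2,n-2]$ replaced by $[2,n-3]$. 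The hard part will be the cyclic-separation step in the single-curve analysis: set-theoretic disjointness of $S$ and $S+i$ is not enough for the curves to be realized disjointly, and the additional verification that the convex hulls fit in disjoint arcs is what rules out the spread-out orbit configurations arising when $d < n$.
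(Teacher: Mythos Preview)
Your combinatorial reduction has a genuine gap. The claimed biconditional---that $i(c_1,c_2)=0$ holds if and only if one of (a), (b), or (c) holds---is false. Essential simple closed curves in $S_{0,n+1}$ are not determined by their interior marked-point sets, and all that follows from $i(c_1,c_2)=0$ (with $|S(c_1)|=|S(c_2)|$, ruling out nesting) is the weak alternative: either $S(c_1)=S(c_2)$ and the curves are isotopic, or $S(c_1)\cap S(c_2)=\emptyset$. Your stronger condition (c), that the sets lie in disjoint circular arcs, does not follow; for instance in $\D_4$ one can easily produce disjoint curves surrounding $\{1,3\}$ and $\{2,4\}$ respectively.

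Most of your argument survives on the weak alternative alone: the $d=n$ case correctly yields $|S|\le 2$ with $S$ consecutive, and your difference-set argument that $|M|=1$ is also fine. The gap is precisely the $d<n$ case. With only set-disjointness you reach $S$ equal to a single $\langle d\rangle$-coset and then need ``disjoint arcs'' to finish, but the purely combinatorial hypotheses are genuinely satisfiable: for $n=6$ the set $S=\{1,3,5\}$ obeys $S+2=S+4=S$ and $S\cap(S+3)=\emptyset$, so no contradiction arises at the level of sets. What actually rules this case out is topological, and it is exactly the content of Proposition~\ref{prop:cake1}. Indeed, once you know $\bar\alpha_1^d(c)=c$ with $2\le d\le n/2$, you get $\bar\alpha_1^{d+1}(c)=\bar\alpha_1(c)$; since $d+1\in[2,n-2]$ the hypothesis forces $i(c,\bar\alpha_1(c))=0$, contradicting Proposition~\ref{prop:cake1}. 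Your proposal never invokes Proposition~\ref{prop:cake1} and offers no substitute, so as written the $d<n$ step is unjustified. The paper's proof takes a different route entirely: it builds the family $X=\{c,\bar\alpha_1^2(c),\bar\alpha_1^4(c),\dots\}$, uses Proposition~\ref{prop:cake1} to show its elements are pairwise distinct and disjoint, and then counts marked points to force each curve to surround at most two.
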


\begin{proof}

We first prove the proposition in the case of the first hypothesis (about $\alpha_1$), and then prove it under the second hypothesis (about $\alpha_2$).  

Assume that $n \geq 5$ and  $i(M,\bar \alpha_1^i(M)) = 0$ for $2 \leq i \leq n-2$.  We first treat the case where $M$ is (a priori) equal to a single curve $c$.  Let $N$ be the largest even number less than $n$.  Consider the multiset of curves
\[
X = \{c, \bar \alpha_1^2(c), \bar \alpha_1^4(c), \dots, \bar \alpha_1^N(c).\}
\]The elements of $X$ have trivial intersection pairwise.  Indeed, $c$ has trivial intersection with each by assumption, and by applying $\bar \alpha_1^{-2}$ to the other curves simultaneously, we see that $\bar \alpha_1^2(c)$ is disjoint from the curves that come after it, etc.

We claim that the elements of $X$ are also pairwise distinct (in other words the multiset $X$ is a set).  Suppose to the contrary that $\bar \alpha_1^p(c)=\bar \alpha_1^q(c)$ where $p$ and $q$ are even and $0 \leq p < q \leq N$.  Applying $\bar \alpha_1^{-p}$ to both curves we obtain the equality $c = \bar \alpha_1^i(c)$ where $0 < i \leq N$ (here $i=q-p$).  We first treat the case where $i \neq N$.  Applying $\bar \alpha_1$ to both sides of the equality $c = \bar \alpha_1^i(c)$ we obtain the equality $\bar \alpha_1(c) = \bar \alpha_1^{i+1}(c)$.  By the assumptions on $c$ in the statement of the proposition and the assumption $i < N$, we have that $i(c,\bar \alpha_1^{i+1}(c))=0$.  Replacing $\bar \alpha_1^{i+1}(c)$ with $\bar \alpha_1(c)$ we obtain $i(c,\bar \alpha_1(c))=0$.  Since $c$ is essential by assumption, this contradicts Proposition~\ref{prop:cake1}.  The case where $i=N$ is treated in the same way, with $\bar \alpha_1$ replaced by $\bar \alpha_1^{-1}$.  This completes the proof of the claim.

Since each curve of $X$ is the image of $c$ under some mapping class, they all surround the same number of marked points and moreover the corresponding sets of marked points are disjoint.  Since $|X| = N/2+1 > n/3$ for $n \geq 4$ and since the curves are distinct it follows that each curve, in particular the curve $c$, surrounds at most 2 marked points.  Because $c$ is essential, it surrounds exactly 2.  This completes the proof in the special case where $M=c$.

We now prove the general case of the proposition.  Suppose for the sake of contradiction that $M$ has two components $c_1$ and $c_2$ (and possibly others).  We consider the corresponding multisets of curves $X_1$ and $X_2$, defined in the same way as $X$ above.  As above, the elements of $X_i$ are pairwise disjoint and distinct for each $i$.  We conclude as above that each element of each $X_i$ is a curve surrounding exactly two marked points.  

We claim that the elements of the multiset $X_1 \cup X_2$ are pairwise distinct.  Suppose to the contrary that, say, $\bar \alpha_1^u(c_1)=\bar \alpha_1^v(c_2)$; we may assume without loss of generality that $u < v$.  As above we obtain from this the equality $c_1 = \bar \alpha_1^i(c_2)$ with $0 < i \leq N$ and then (since $n \geq 5$) the equality $\bar \alpha_1^{\pm 1}(c_1) = \bar \alpha_1^{i \pm 1}(c_2)$, where $i \pm 1$ is chosen to lie in $[2,n-2]$.  By Proposition~\ref{prop:cake1} we have $i(c_1,\bar \alpha_1^{\pm 1}(c_1)) \neq 0$, and so combining this with the previous equality we have $i(c_1,\bar \alpha_1^{i \pm 1}(c_2)) \neq 0$.  It follows that $i(M,\bar \alpha_1^{i \pm 1}(M)) \neq 0$, contrary to the assumption.  This completes the proof of the claim.  

We complete the proof now in two cases, first for $n$ even and then for $n$ odd.  Assume that $n$ is even.  In this case we have that each element of $X_1$ surrounds exactly two marked points, that the elements of $X_1$ are pairwise distinct and disjoint, and that $|X_1| =n/2$.  In other words, the elements of $X_1$ surround the $n$ marked points of $\D_n$ in pairs.  We also have that the elements of $X_2$ surround two marked points each, and that they are distinct and disjoint from the elements of $X_1$.  This is a contradiction.  The case where $n$ is odd is essentially the same, except that $|X_1|=(n-1)/2$.  This completes the proof assuming the first hypothesis.

Now assume that $n \geq 6$ and $i(M,\bar \alpha_2^i(M)) = 0$ for $2 \leq i \leq n-3$.  Let $q$ be the marked point of $S_{0,n+1}$ that is fixed by $\bar \alpha_2$ and is not equal to $p$.  It must be that $q$ does not lie in the interior of any component of $M$, for otherwise $M$ would not be disjoint from its image under any power of $\bar \alpha_2$.  It must also be that each component of $M$ contains at least two marked points in its exterior for the same reason.  Therefore, we may forget the marked point $q$ and we obtain a multicurve in $S_{0,n}$ satisfying the first hypothesis of the proposition.  Since $q$ was not contained in the interior of any component of $M$, the proposition follows.
\end{proof}


\section{Torsion to torsion}
\label{sec:torsion}

The goal of this section is to prove Proposition~\ref{prop:numthy} below.  By combining the first statement with Lemma~\ref{lem:cycliccyclic}, we obtain Theorem~\ref{thm:lin} from the introduction, which generalizes Lin's result \cite[Corollary 1.16]{linbp}.  We give here a simple, geometric proof.  
 
For the statement, recall that $z$ is the positive generator for $Z(B_n)$ and that $\bar \rho : \B_n \to \bar B_m$ is the homomorphism associated to a given homomorphism $\rho : B_n \to B_m$.

\begin{proposition}
\label{prop:numthy}
Let $n \geq 5$, let $m\geq 1$, and let $\rho : \B_n \to \B_m$ be a homomorphism. Assume that $\bar \rho(z)$ is periodic. 
\begin{enumerate}
\item If $m$ and $m-1$ are both indivisible by $n$ or if both are indivisible by $n-1$ then $\bar \rho$ has cyclic image.   
\item If $m=n$ then, up to replacing $\rho$ by an equivalent homomorphism, we either have that $\bar \rho$ has cyclic image or that $\bar \rho(\alpha_1) = \bar \alpha_1^k$ where $\gcd(k,n)=1$.
\end{enumerate}
\end{proposition}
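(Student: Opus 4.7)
The plan is to exploit that $\bar\rho(\alpha_1)$ and $\bar\rho(\alpha_2)$ are both periodic and to use the structure of rotations of the sphere $S_{0,m+1}$. Since $\bar\rho(z) = \bar\rho(\alpha_1)^n = \bar\rho(\alpha_2)^{n-1}$ is periodic by hypothesis, and since a mapping class with a periodic power is itself periodic (by Nielsen--Thurston), both $\bar\rho(\alpha_1)$ and $\bar\rho(\alpha_2)$ are periodic, and therefore realized in $\bar\B_m$ as rotations of $S_{0,m+1}$ fixing the distinguished marked point $p$.

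I would prove part (1) by contraposition. Suppose $\bar\rho$ has non-cyclic image. Since $\sigma_1 = \alpha_2\alpha_1^{-1}$ and each $\sigma_i$ is a conjugate of $\sigma_1$ by a power of $\alpha_1$, the image $\bar\rho(\B_n)$ is generated by $\bar\rho(\alpha_1)$ and $\bar\rho(\alpha_2)$. If these rotations commuted, $\bar\rho(\B_n)$ would be abelian, hence cyclic since $H_1(\B_n) \cong \Z$; so they do not commute. Two non-trivial rotations of $S^2$ commute if and only if they share an axis, so $\bar\rho(\alpha_1)$ and $\bar\rho(\alpha_2)$ have distinct axes. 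Because $\bar\rho(z)$ is a power of each, if $\bar\rho(z)$ were non-trivial it would have to rotate about two distinct axes, which is impossible. Thus $\bar\rho(z) = 1$, so $\bar\rho$ factors through $\bar\B_n$, in which $\bar\alpha_1$ has order $n$ and $\bar\alpha_2$ has order $n-1$; writing $d_i$ for the order of $\bar\rho(\alpha_i)$, this gives $d_1 \mid n$ and $d_2 \mid n-1$.

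The main obstacle I foresee is ruling out proper divisor orders. If $d_1 < n$ then $\alpha_1^{d_1} \in \ker\bar\rho$; invoking the well-suited curve criterion of Lanier--Margalit (the same tool the paper uses to show that the normal closure of $\alpha_1^{n-1}$ in $\B_n$ contains $\B_n'$), the normal closure of $\alpha_1^{d_1}$ should also contain $\B_n'$, forcing $\bar\rho$ to factor through $\B_n/\B_n' \cong \Z$ and hence have cyclic image, a contradiction. The same argument rules out $d_2 < n-1$. With $d_1 = n$ and $d_2 = n-1$ in hand, I would use that a non-trivial periodic element of $\bar\B_m$ of order $d$ is a rotation fixing $p$ whose other axis endpoint is either a marked or an unmarked point; the non-$p$ marked points then split into orbits of size $d$, giving $d \mid m-1$ or $d \mid m$, and in either case $d \mid m(m-1)$. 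Applying this to $d_1 = n$ and $d_2 = n-1$ and using $\gcd(n, n-1) = 1$ yields $n(n-1) \mid m(m-1)$, contradicting the hypothesis of part (1).

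For part (2), take $m = n$ and assume $\bar\rho$ has non-cyclic image. From part (1), $\bar\rho(\alpha_1)$ is a rotation of $S_{0,n+1}$ of order $n$ fixing $p$; since all $n$ non-$p$ marked points must lie in a single orbit of size $n$, the other axis endpoint is unmarked. Any two such rotations are conjugate via a homeomorphism of the sphere fixing $p$ and respecting the marked-point set (appealing to Brouwer--de K\'er\'ekjart\`o--Eilenberg, as the paper does elsewhere), so $\bar\rho(\alpha_1)$ is conjugate in $\bar\B_n$ to $\bar\alpha_1^k$ for some $k$; the order being exactly $n$ forces $\gcd(k, n) = 1$. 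Any such conjugation in $\bar\B_n$ lifts to an inner automorphism of $\B_n$, and post-composing $\rho$ with this inner automorphism produces $\bar\rho(\alpha_1) = \bar\alpha_1^k$, completing the plan for part (2).
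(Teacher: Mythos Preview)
Your argument has a genuine gap in the step where you conclude $\bar\rho(z)=1$. The phrase ``rotate about two distinct axes'' treats $\bar\rho(\alpha_1)$ and $\bar\rho(\alpha_2)$ as literal rotations of $S^2$ with well-defined axes, but an axis is only defined once you choose a finite-order representative, and Nielsen realization only lets you do this simultaneously for a \emph{commuting} family---precisely what you have just argued fails. Put differently, you are implicitly claiming that two non-commuting periodic elements of $\bar\B_m$ cannot share a nontrivial common power; this is not obvious, since the centralizer of a nontrivial periodic element of $\bar\B_m$ need not be abelian (it is governed by the mapping class group of the quotient orbifold, which can be large).

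Fortunately this detour is unnecessary, and removing it brings you exactly to the paper's argument. You already observe that $d_i \mid m(m-1)$ (from the classification of periodic elements in $\bar\B_m$) and that if $\alpha_1^{d_1}$ is non-central then Lemma~\ref{lem:wscc} forces cyclic image. So in the non-cyclic case you get $n \mid d_1$ and $n-1 \mid d_2$ directly---no need to first pin down $d_1=n$ via $\bar\rho(z)=1$---and hence $n(n-1)\mid m(m-1)$, which is the paper's proof of~(1) run in contrapositive. For part~(2) your approach is actually cleaner than the paper's: with $m=n$ the bound $d_1\le n$ (the maximal order in $\bar\B_n$) together with $n\mid d_1$ gives $d_1=n$ outright, so $\bar\rho(\alpha_1)$ must be conjugate to some $\bar\alpha_1^k$ with $\gcd(k,n)=1$, bypassing the paper's separate abelianization computation that rules out $\bar\rho(\alpha_1)\in\langle\bar\alpha_2\rangle$. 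Just note that you are invoking intermediate claims from your proof of~(1), not its statement, since the hypothesis $n(n-1)\nmid m(m-1)$ of~(1) fails when $m=n$.
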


Note that in the statement of Proposition~\ref{prop:numthy}, the element $\alpha_1$ lies in $\B_n$ and the element $\bar \alpha_1$ lies in $\bar \B_m$.  In what follows, when we refer to an element $\alpha_i$ or $\bar \alpha_i$, we rely on context to specify which group it lies in.  

To prove Proposition \ref{prop:numthy} we require the following lemma, which is a version of the well-suited curve criterion of Lanier and the third author of this paper \cite{laniermargalit}.  In what follows, we denote by $H_c$ the element of $\B_n$ given by a half-twist about a homotopy class of arcs $c$ in $\D_n$.  

 \begin{lemma}
 \label{lem:wscc}
Let $n \geq 5$ and let $f \in \B_n$.  Suppose there is a homotopy class of arcs $c$ in $\D_n$ such that either
\begin{enumerate}
\item $c$ and $f(c)$ have disjoint representatives, or
\item $c$ and $f(c)$ have representatives that share one endpoint and have disjoint interiors.
\end{enumerate}
Then the normal closure of $f$ in $\B_n$ contains $\B_n'$.
\end{lemma}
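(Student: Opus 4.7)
The plan is to exhibit an element of the form $\sigma_i \sigma_j^{-1}$ (up to conjugation in $\B_n$) in the normal closure $N$ of $f$, and then to verify that, for $n \geq 5$, the normal closure of such an element already equals $\B_n'$.

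For the first part, I would use the standard identity $f H_c f^{-1} = H_{f(c)}$ to write
\[
H_{f(c)} H_c^{-1} = [f, H_c] = f \cdot (H_c f^{-1} H_c^{-1}),
\]
a product of $f \in N$ and a conjugate of $f^{-1}$, hence itself in $N$. By the change of coordinates principle for arcs in $\D_n$, there is $h \in \B_n$ carrying $(c, f(c))$ to the standard pair of arcs underlying $(\sigma_1, \sigma_2)$ in case (2), or to the standard pair underlying $(\sigma_1, \sigma_3)$ in case (1). Conjugating the element above by $h$ places $\sigma_2 \sigma_1^{-1}$ into $N$ in case (2), and $\sigma_3 \sigma_1^{-1}$ into $N$ in case (1).

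For the second part, I would pass to the quotient $G = \B_n / N$ and show that all of the standard generators $\sigma_1, \dots, \sigma_{n-1}$ have a common image in $G$; since that common image generates $G$, the group $G$ is cyclic and hence $N \supseteq \B_n'$. In case (2), the images of $\sigma_1$ and $\sigma_2$ coincide in $G$, and since every pair of arcs sharing a single endpoint is $\B_n$-conjugate to the standard one, the same identification holds for every consecutive pair $(\sigma_i, \sigma_{i+1})$, finishing this case. In case (1), one initially gets only that the images of $\sigma_1$ and $\sigma_3$ agree in $G$. Here I would exploit $n \geq 5$, which makes the generator $\sigma_4$ available: substituting the identification $\sigma_3 = \sigma_1$ into the braid relation $\sigma_3 \sigma_4 \sigma_3 = \sigma_4 \sigma_3 \sigma_4$ and using the commutation $\sigma_1 \sigma_4 = \sigma_4 \sigma_1$ (valid since $|1-4| \geq 2$) forces $\sigma_1 = \sigma_4$ in $G$, and so $\sigma_3 = \sigma_4$ in $G$. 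This is now a shared-endpoint identification, and the case (2) argument takes over.

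The main obstacle is case (1): upgrading a disjoint-arc identification in $G$ to a shared-endpoint identification. This is exactly where the hypothesis $n \geq 5$ is used, and it also accounts for the sharpness of the bound, since in $\B_4$ the normal closure of $\sigma_1 \sigma_3^{-1}$ is the kernel of the standard surjection $\B_4 \to \B_3$, a proper subgroup of $\B_4'$ because $\B_3$ is nonabelian.
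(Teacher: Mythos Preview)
Your proposal is correct and follows the paper's approach for the first half: the paper also writes $H_c H_{f(c)}^{-1} = H_c f H_c^{-1} f^{-1}$ to place it in the normal closure of $f$, and then invokes change of coordinates to identify it with $\sigma_1\sigma_2^{-1}$ or $\sigma_1\sigma_3^{-1}$.

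The difference is in the second half. The paper simply cites Lin \cite[Remark~1.10]{linbp} for the fact that $\sigma_1\sigma_2^{-1}$ and $\sigma_1\sigma_3^{-1}$ each normally generate $\B_n'$ when $n\geq 5$. You instead supply a direct, self-contained argument: in the quotient $G=\B_n/N$ you chain together the braid relation $\sigma_3\sigma_4\sigma_3=\sigma_4\sigma_3\sigma_4$ with the far-commutation $\sigma_1\sigma_4=\sigma_4\sigma_1$ (available precisely because $n\geq 5$) to promote the ``disjoint'' identification $\sigma_1=\sigma_3$ to the ``adjacent'' identification $\sigma_3=\sigma_4$, after which the consecutive case finishes the job. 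This is a genuinely more elementary route---it avoids the external reference and makes the role of the hypothesis $n\geq 5$ completely transparent, including the sharpness at $n=4$ that you note. The paper's route is shorter on the page but relies on the reader tracking down Lin's remark; yours is longer but self-contained.
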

 
\begin{proof}

The hypotheses imply that $H_cH_{f(c)}^{-1}$ is conjugate in $\B_n$ to either $\sigma_1 \sigma_2^{-1}$ or $\sigma_1 \sigma_3^{-1}$.  Since $H_cH_{f(c)}^{-1}$ is equal to $H_cfH_{c}^{-1}f^{-1}$ it follows that $\sigma_1 \sigma_2^{-1}$ or $\sigma_1 \sigma_3^{-1}$ lies in the normal closure of $f$.  Since $\sigma_1 \sigma_2^{-1}$ and $\sigma_1\sigma_3^{-1}$ are both normal generators for $\B_n'$ (see \cite[Remark 1.10]{linbp}), the lemma follows.
 \end{proof}

\begin{proof}[Proof of Proposition \ref{prop:numthy}]

Suppose that neither $m$ or $m-1$ is a multiple of $n$.  By the classification of periodic elements in $\bar \B_m$, we know that $\bar \rho(\alpha_1)$ is conjugate to a power of either $\bar \alpha_1$ or $\bar \alpha_2$. Since $\bar \alpha_1$ has order $m$ and $\bar \alpha_2$ has order $m-1$, it follows that there is an $\epsilon \in \{m,m-1\}$ so that $\alpha_1^\epsilon$ lies in the kernel of $\bar \rho$. 

Since $\epsilon$ is not a multiple of $n$, it must be that $\alpha_1^\epsilon$ is a non-central power of $\alpha_1$.  All non-central powers of $\alpha_1$ satisfy the hypotheses of Lemma~\ref{lem:wscc}.  Thus, the normal closure of $\alpha_1^\epsilon$, hence the kernel of $\bar \rho$, contains $\B_n'$.  It follows that the image of $\bar \rho$ is cyclic, as desired.

The case where neither $m$ or $m-1$ is a multiple of $n-1$ is essentially the same, with $\alpha_1$ replaced by $\alpha_2$.  

To prove the second statement, assume that $m=n$ and that  $\bar \rho(\alpha_1) = \bar \alpha_1^k$ with $\gcd(k,n)\neq 1$.  In this case there is a $0 < j < n$ such that $\bar \rho(\alpha^j)=1$.  In other words $\alpha^j \in \ker \bar \rho$.  As above, an application of Lemma~\ref{lem:wscc} completes the proof.
\end{proof}

We have the following corollary of the first statement of Proposition~\ref{prop:numthy}.

\begin{corollary}
\label{cor:numthy}
Let $n \geq 5$, let $m \leq 2n$, and assume $m \neq n$.  Let $\rho : \B_n \to \B_m$ be a homomorphism, and assume that $\bar \rho(z)$ is periodic.  Then $\bar \rho$ has cyclic image.
\end{corollary}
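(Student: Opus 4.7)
The plan is to mimic the proof of Proposition~\ref{prop:numthy}(1), but to replace the blunt exponent $m(m-1)$ with the true order of $\bar\rho(\alpha_k)$ in $\bar\B_m$.  This refinement is essential because there are sporadic pairs $(n,m)$ satisfying the hypotheses of the corollary for which $n(n-1)$ divides $m(m-1)$---for example $(n,m) = (6,10)$, $(15,21)$, $(21,36)$---so Proposition~\ref{prop:numthy}(1) does not apply off the shelf.

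First, since $\bar\rho(z) = \bar\rho(\alpha_1)^n = \bar\rho(\alpha_2)^{n-1}$ is periodic by hypothesis, so are $\bar\rho(\alpha_1)$ and $\bar\rho(\alpha_2)$.  By the classification of periodic elements in $\bar\B_m$ used in the proof of Proposition~\ref{prop:numthy}, each is conjugate to a power of $\bar\alpha_1$ or $\bar\alpha_2$, so letting $d_k$ denote the order of $\bar\rho(\alpha_k)$, we have $d_k \mid m$ or $d_k \mid m-1$.  Both $\alpha_1^{d_1}$ and $\alpha_2^{d_2}$ lie in $\ker \bar\rho$, and $\alpha_1^{d_1}$ is non-central in $\B_n$ precisely when $n \nmid d_1$ (and similarly for $\alpha_2$ and $n-1$, replacing $z = \alpha_1^n$ by $z = \alpha_2^{n-1}$).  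Assuming $n \nmid d_1$, an arc between two cyclically consecutive marked points of $\D_n$ satisfies condition (1) or (2) of Lemma~\ref{lem:wscc} with $f = \alpha_1^{d_1}$ (depending on whether the rotation angle $2\pi d_1/n$ equals $\pm 2\pi/n$ or not), so the normal closure of $\alpha_1^{d_1}$ contains $\B_n'$ and $\bar\rho$ has cyclic image; the parallel argument for $\alpha_2$ handles the case $(n-1) \nmid d_2$.

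It then remains to rule out the possibility that $n \mid d_1$ and $(n-1) \mid d_2$ simultaneously.  Combining $n \mid d_1$ with $d_1 \mid m$ or $d_1 \mid m-1$ yields $n \mid m$ or $n \mid m-1$; analogously, $(n-1) \mid m$ or $(n-1) \mid m-1$.  For $1 \leq m \leq 2n$ with $m \neq n$, the first alternative restricts $m$ to $\{1, n+1, 2n\}$ and the second to $\{1, n-1, 2n-2, 2n-1\}$; since $n \geq 5$, these two sets meet only in $\{1\}$.  When $m = 1$ the group $\B_m$ is trivial, so $\bar\rho$ is automatically cyclic.  The main obstacle is exactly this passage beyond Proposition~\ref{prop:numthy}(1) in the sporadic pairs above: the sharper divisibility $d_k \mid m$ or $d_k \mid m-1$, rather than merely $d_k \mid m(m-1)$, is what shrinks the exceptional set all the way down to $\{1\}$ and makes the corollary go through.
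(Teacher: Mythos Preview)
Your proof is correct, and in fact it repairs a genuine gap in the paper's own argument. The paper attempts to apply Proposition~\ref{prop:numthy}(1) directly by claiming that $n(n-1)\nmid m(m-1)$ whenever $n\geq 5$, $m\leq 2n$, and $m\neq n$; its justification is that $n(n-1)\mid m(m-1)$ together with $\gcd(m,m-1)=1$ forces $n\mid m$ or $n\mid m-1$. That implication is false when $n$ is not a prime power, and your examples $(n,m)=(6,10)$, $(15,21)$, $(21,36)$ all satisfy the hypotheses of the corollary yet have $n(n-1)\mid m(m-1)$. So Proposition~\ref{prop:numthy}(1) really does not close out the argument on its own.

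Your fix is exactly the right one: rather than passing through the product $m(m-1)$, you use the sharper fact (already present in the proof of Proposition~\ref{prop:numthy}) that the order $d_k$ of $\bar\rho(\alpha_k)$ divides $m$ or divides $m-1$, because every periodic element of $\bar\B_m$ is conjugate into $\langle\bar\alpha_1\rangle$ or $\langle\bar\alpha_2\rangle$. Combined with $n\mid d_1$ and $(n-1)\mid d_2$ in the residual case, this yields the honest constraints $n\mid m$ or $n\mid m-1$, and $(n-1)\mid m$ or $(n-1)\mid m-1$, which for $n\geq 5$ intersect only in $m=1$. Your invocation of Lemma~\ref{lem:wscc} for the non-central powers $\alpha_1^{d_1}$ and $\alpha_2^{d_2}$ is also correct. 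In short, the paper's approach and yours share the same skeleton, but your refinement from $d_k\mid m(m-1)$ to $d_k\mid m$ or $d_k\mid m-1$ is not cosmetic---it is what makes the proof go through.
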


\begin{proof}

It is enough to show that $m$ and $n$ satisfy the hypotheses of the first statement of Proposition~\ref{prop:numthy}.  To this end, we assume that one of $\{m,m-1\}$ is divisible by $n$ and show that both of $\{m,m-1\}$ are indivisible by $n-1$.  

Suppose first that $n$ divides $m$.  Since $m \leq 2n$ and $m \neq n$, it must be that $m=2n$.  As $n > 3$, it follows that  $m=2n$ and $m-1=2n-1$ are both indivisible by $n-1$.  

Now suppose $n$ divides $m-1$.  Since $m \leq 2n$ it must be that $m=n+1$.  As $n > 3$, it follows that $m=n+1$ and $m-1=n$ are indivisible by $n-1$, as desired.
\end{proof}


\section{Torsion to pseudo-Anosov}
\label{sec:pA}

The goal of this section is to prove Proposition~\ref{prop:torsiontopA} below.  As above, we denote by $S_{0,n+1}$ a sphere with $n+1$ marked points and we identify the group $\bar \B_n$ with the subgroup of $\Mod(S_{0,n+1})$ fixing one distinguished marked point $p$.  Also, we say that an element of $\bar \B_n$ is pseudo-Anosov if the corresponding element of $\Mod(S_{0,n+1})$ is.  

\begin{proposition}\label{prop:torsiontopA}
Let $m,n \geq 3$ and let $\rho : \B_n \to \B_m$ be a homomorphism.  If $\bar \rho(z)$ is pseudo-Anosov then $\bar \rho$ has cyclic image.
\end{proposition}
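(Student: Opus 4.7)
The plan is to show that $\bar\rho(\B_n)$ is abelian. Once that is in hand, the proposition follows immediately: the abelianization of $\B_n$ is infinite cyclic (generated by the common image of the $\sigma_i$), so every abelian quotient of $\B_n$ is cyclic, which is exactly the statement that $\bar\rho$ has cyclic image.

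The first observation is that $z$ is central in $\B_n$, so $\bar\rho(z)$ is central in $\bar\rho(\B_n)$; equivalently, $\bar\rho(\B_n)$ is contained in the centralizer $C(\bar\rho(z))$ of $\bar\rho(z)$ inside $\Mod(S_{0,m+1})$. It therefore suffices to show that this centralizer is abelian. This is essentially McCarthy's theorem on centralizers of pseudo-Anosov mapping classes, whose short geometric argument I would reproduce for completeness. Let $F^+$ and $F^-$ denote the invariant measured foliations of $\bar\rho(z)$, with stretch factor $\lambda>1$. Any $\psi\in C(\bar\rho(z))$ preserves the unordered pair $\{F^+,F^-\}$, and it cannot interchange them: such a swap would conjugate $\bar\rho(z)$ into a pseudo-Anosov with stretch factor $\lambda^{-1}$, forcing $\psi\bar\rho(z)\psi^{-1}=\bar\rho(z)^{-1}\ne\bar\rho(z)$. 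Hence $\psi$ preserves $F^+$ and $F^-$ individually, scaling the transverse measure on $F^+$ by some $\mu_\psi>0$ and the transverse measure on $F^-$ by $\mu_\psi^{-1}$. The map $\psi\mapsto\mu_\psi$ is a homomorphism $C(\bar\rho(z))\to\R_{>0}$ whose image is discrete and hence infinite cyclic, generated by a primitive stretch factor $\lambda_0$, and whose kernel is the finite group of isometries of the singular flat structure associated to $\bar\rho(z)$. Being a finite subgroup of $\bar\B_m$ fixing a distinguished marked point, this kernel is cyclic. A primitive stretching element commutes with every such flat isometry, so the extension splits as a direct product $\Z\times F$; in particular, $C(\bar\rho(z))$ is abelian, and so is $\bar\rho(\B_n)$.

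The only nontrivial ingredient is McCarthy's structure result on centralizers of pseudo-Anosovs (or the short direct argument sketched above). The inclusion $\bar\rho(\B_n)\subseteq C(\bar\rho(z))$ is immediate from the centrality of $z$, and the passage from abelian image to cyclic image is just the remark that $H_1(\B_n;\Z)\cong\Z$. Accordingly, I do not expect any real obstacle in executing this plan.
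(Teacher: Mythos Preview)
Your overall strategy matches the paper's exactly: use the centrality of $z$ to place $\bar\rho(\B_n)$ inside the centralizer of the pseudo-Anosov $\bar\rho(z)$, invoke McCarthy's structure theorem to see that this centralizer is virtually~$\Z$, argue that it is in fact abelian, and then deduce cyclicity of the image from $H_1(\B_n;\Z)\cong\Z$.

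The gap is in the sentence ``A primitive stretching element commutes with every such flat isometry, so the extension splits as a direct product $\Z\times F$.'' This is not automatic. McCarthy's theorem gives a short exact sequence $1\to F\to C\to\Z\to 1$ with $F$ finite, and you correctly note that in $\bar\B_m$ the group $F$ is cyclic. But the conjugation action of $C$ on $F$ factors through $C/F\cong\Z$, and there is no general geometric reason for this action to be trivial: a primitive affine stretch and a flat isometry have commuting linear parts, but that does not force the maps themselves to commute, so conjugation by $\phi_0$ may permute the elements of $F$ nontrivially. All you know a priori is that $\bar\rho(z)$ itself centralizes $F$ (by definition of $C$), which only forces some \emph{power} of $\phi_0$ to act trivially on $F$.

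The paper closes this gap with an observation specific to $\bar\B_m$: distinct elements of any finite cyclic subgroup $F_0\leq\bar\B_m$ have distinct images in the abelianization $H_1(\bar\B_m)\cong\Z/m(m-1)$, hence no two of them are conjugate in $\bar\B_m$. Consequently the conjugation action of $C\cap\bar\B_m$ on $F_0$ is trivial, $F_0$ is central, and the centralizer inside $\bar\B_m$ is abelian. You should replace the unjustified commutation claim with this abelianization argument.
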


Before we can prove the proposition, we will need the following lemma, which concerns the structure of the centralizer of a pseudo-Anosov braid. The first statement has appeared in the literature, for instance in the work of Gonz\'alez-Meneses \cite[Proposition 4.2]{Juan}; for completeness, we give a proof. 

\begin{lemma}
\label{lemma:magic}
If $f \in \bar \B_n$ is pseudo-Anosov, then the centralizer $C_{\bar \B_n}(f)$ is abelian. Further, if $g \in C_{\bar \B_n}(f)$ is pseudo-Anosov, then $C_{\bar \B_n}(f) = C_{\bar \B_n}(g)$.
\end{lemma}

\begin{proof}

In any group, two commuting elements with abelian centralizers have equal centralizers.  Thus, the second statement follows from the first.  It remains to prove the first.

Suppose that $f \in \bar \B_n$ is pseudo-Anosov.  We may regard $f$ as an element of $\Mod(S_{0,n+1})$. Under the action of $\Mod(S_{0,n+1})$ on the space $\PMF(S_{0,n+1})$ of projective measured foliations, the element $f$ has two fixed points, $\F_s$ and $\F_u$, and acts with source-sink dynamics.  Let $\G^*$ denote the subgroup of $\Mod(S_{0,n+1})$ consisting of elements that fix both $\F_s$ and $\F_u$.  Further, let $\G_0^* = \G^* \cap \bar \B_n$.  In other words, $\G_0^*$ is the subgroup of $\G^*$ consisting of elements that fix the distinguished marked point $p$.  

McCarthy in \cite{mccarthy} proved there is a short exact sequence
\[
1 \to F \to \G^* \to \Z \to 1
\]
where $F$ is a finite group.  

Let $C_{\Mod}(f)$ denote the centralizer of $f$ in $\Mod(S_{0,n+1})$.  Because of the source-sink dynamics, we have $C_{\Mod}(f) \subseteq \G^*$.  Since $C_{\bar \B_n}(f) \subseteq C_{\Mod}(f)$, we have $C_{\bar \B_n}(f) \subseteq \G^*$.  Since the elements of $\bar \B_n$ fix the distinguished marked point $p$ we further have $C_{\bar \B_n}(f) \subseteq \G_0^*$.  Thus it suffices to show that the latter is abelian.

To this end, we restrict McCarthy's short exact sequence to $\bar \B_n$:
\[
1 \to F_0 \to \G_0^* \to \Z \to 1.
\]
The group $F_0$---indeed any finite subgroup of $\bar \B_n$---may be regarded as a group of rotations about $p$.  Because distinct elements have different angles of rotation about $p$, no two elements of $F_0$ are conjugate in $\bar \B_n$.  It follows that the conjugation action of  $\G_0^*$ on $F_0$ is trivial.  In other words, $F_0$ is central in $\G_0^*$.  Hence $\G_0^*$ is abelian, as desired. 
\end{proof}

\begin{proof}[Proof of Proposition~\ref{prop:torsiontopA}]

The group $\B_n$ is equal to the centralizer of $z$ and hence $\bar \rho(\B_n)$ maps into the centralizer of the pseudo-Anosov element $\bar \rho(z)$.  By Lemma~\ref{lemma:magic}, the latter is abelian.  Since the abelianization of $\B_n$ is cyclic, the proposition follows.
\end{proof}


\section{Castel's theorem and extensions}
\label{sec:castel}

In this section we give our proof of Castel's classification of homomorphisms $\B_n \to \B_n$.  Castel's original theorem classifies homomorphisms $\B_n \to \B_n$ for $n \geq 6$.  In addition to extending the classification to the case $n \geq 5$ (Theorem~\ref{thm:castel}), we prove as a separate theorem an extension to the case $n=4$, which accounts for the exceptional homomorphism $\B_4 \to \B_3$ (Theorem~\ref{thm:castel4}).  We prove each of these theorems in a separate subsection.

\subsection{Castel's theorem}

The following theorem is our extension of Castel's theorem to the case $n \geq 5$.  

\begin{theorem}
\label{thm:castel}
Let $n \geq 5$.  Any homomorphism $\rho : \B_n \to \B_n$ is equivalent to either the trivial homomorphism or the identity. 
\end{theorem}

Before proceeding to the proof of Theorem~\ref{thm:castel}, we require two preliminary lemmas.

\begin{lemma}
\label{lem:cycliccyclic}
Let $m,n \geq 3$, and let $\rho : \B_n \to \B_m$ be a homomorphism.  Then $\rho$ has cyclic image if and only if $\bar \rho$ has cyclic image.  
\end{lemma}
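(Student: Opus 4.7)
The forward direction will be immediate: $\bar\rho$ factors through $\rho$ via the central projection $\B_m \to \bar\B_m$, so the image of a cyclic $\rho(\B_n)$ maps to a cyclic $\bar\rho(\B_n)$. The content is the converse.

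For the converse, the plan is to assume $\bar\rho(\B_n)$ is cyclic (hence abelian) and show that $\rho(\sigma_1),\dots,\rho(\sigma_{n-1})$ all coincide in $\B_m$. First I will observe that adjacent Artin generators $\sigma_i$ and $\sigma_{i+1}$ are conjugate in $\B_n$; indeed the braid relation gives $(\sigma_i\sigma_{i+1})\sigma_i(\sigma_i\sigma_{i+1})^{-1} = \sigma_{i+1}$. Hence all the $\sigma_i$ are pairwise conjugate in $\B_n$, so $\bar\rho(\sigma_1),\dots,\bar\rho(\sigma_{n-1})$ are pairwise conjugate in the abelian group $\bar\rho(\B_n)$. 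In an abelian group conjugation is trivial, so there is a single element $\bar h \in \bar\B_m$ with $\bar\rho(\sigma_i) = \bar h$ for every $i$.

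Next I will lift. Pick any lift $h \in \B_m$ of $\bar h$ and let $z$ generate $Z(\B_m) \cong \Z$ (recall $m \geq 3$); then each $\rho(\sigma_i)$ must lie in the coset $h\cdot Z(\B_m)$, so $\rho(\sigma_i) = h z^{\ell_i}$ for some $\ell_i \in \Z$. The final step is to feed this into the braid relation $\sigma_i\sigma_{i+1}\sigma_i = \sigma_{i+1}\sigma_i\sigma_{i+1}$ for each $1 \le i \le n-2$. Since $z$ is central in $\B_m$, both sides collapse to a power of $h$ times a power of $z$, namely $h^3 z^{2\ell_i + \ell_{i+1}}$ on the left and $h^3 z^{\ell_i + 2\ell_{i+1}}$ on the right, forcing $\ell_i = \ell_{i+1}$. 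Iterating gives a single integer $\ell$ with $\rho(\sigma_i) = h z^\ell$ for all $i$, so $\rho(\B_n) = \langle h z^\ell\rangle$ is cyclic, as desired.

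I do not expect any serious obstacle here: the argument is purely a lifting computation exploiting that $\B_m$ is a central $\Z$-extension of $\bar\B_m$ and that every abelian quotient of $\B_n$ identifies all the standard generators. The only mild point to watch is that $Z(\B_m)$ is infinite cyclic, which requires $m \geq 3$—precisely the hypothesis in the lemma.
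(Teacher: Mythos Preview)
Your argument is correct. It takes essentially the same approach as the paper: both proofs exploit that $\B_m$ is a central $\Z$-extension of $\bar\B_m$ together with the fact that all $\sigma_i$ are conjugate in $\B_n$ (equivalently, that the abelianization of $\B_n$ is cyclic).

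The only difference is in presentation. The paper argues abstractly: since $\rho(\B_n)$ is an extension of the cyclic group $\bar\rho(\B_n)$ by the central group $Z(\B_m)\cap\rho(\B_n)$, it is abelian, and hence $\rho$ factors through $\B_n^{ab}\cong\Z$. You instead lift explicitly and use the braid relation to force $\ell_i=\ell_{i+1}$. Your computation is a concrete unwinding of the paper's ``central-by-cyclic implies abelian'' step; both arrive at the same conclusion that all $\rho(\sigma_i)$ coincide.
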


\begin{proof}

If $\rho$ has cyclic image then $\bar \rho$ must be cyclic, since $\bar \rho$ is the post-composition of $\rho$ with a quotient map.  Now assume that $\bar \rho$ has cyclic image. Then we have an exact sequence
\[
1\to Z(B_m)\cap \rho(\B_n) \to \rho(\B_n)\to \bar \rho(\B_n)\to 1,
\]
so $\rho(\B_n)$ is an extension of a cyclic group by a cyclic group. Since $Z(B_m)\cap \rho(\B_n)$ is central in $\B_m$, we in fact have that $\rho(\B_n)$ is abelian. This implies that $\rho$ factors through the abelianization of $\B_n$, which is $\mathbb{Z}$. Hence $\rho$ has cyclic image. 
\end{proof}

\begin{lemma}\label{lem:emptyCRS}
Let $n \geq 5$ and $m \geq 3$, and let $\rho: \B_n \to \B_m$ a homomorphism.  If the canonical reduction system of 
$\bar \rho(\sigma_1)$ is empty, then $\bar \rho$ has cyclic image.
\end{lemma}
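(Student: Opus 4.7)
The plan is to reduce the lemma to showing $\bar\rho(\sigma_1) = \bar\rho(\sigma_2)$ in $\bar\B_m$. Assuming this equality, the element $\sigma_1\sigma_2^{-1}$ lies in $\ker\bar\rho$; since $\sigma_1\sigma_2^{-1}$ normally generates $\B_n'$ (as recalled in the proof of Lemma~\ref{lem:wscc}, citing \cite[Remark~1.10]{linbp}), this forces $\B_n' \subseteq \ker\bar\rho$. Then $\bar\rho$ factors through $\B_n/\B_n' \cong \Z$ and so has cyclic image, completing the proof.

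For the equality I will use $\sigma_4$ as an intermediary. Since $n\geq 5$, the generator $\sigma_4$ commutes with both $\sigma_1$ and $\sigma_2$ in $\B_n$, so $\bar\rho(\sigma_4)$ commutes with $\bar\rho(\sigma_1)$ and with $\bar\rho(\sigma_2)$ in $\bar\B_m$. Because the $\sigma_i$ are pairwise conjugate in $\B_n$, every $\bar\rho(\sigma_i)$ shares the Nielsen--Thurston type of $\bar\rho(\sigma_1)$; by hypothesis this type is either pseudo-Anosov or periodic. I treat the two cases separately.

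In the pseudo-Anosov case, $\bar\rho(\sigma_4)$ is pseudo-Anosov, so by Lemma~\ref{lemma:magic} its centralizer in $\bar\B_m$ is abelian. This centralizer contains $\bar\rho(\sigma_1)$ and $\bar\rho(\sigma_2)$, so these two elements commute. Feeding commutativity into the braid relation $\sigma_1\sigma_2\sigma_1 = \sigma_2\sigma_1\sigma_2$ yields $\bar\rho(\sigma_1)^2\bar\rho(\sigma_2) = \bar\rho(\sigma_1)\bar\rho(\sigma_2)^2$, and cancellation gives $\bar\rho(\sigma_1)=\bar\rho(\sigma_2)$, as required.

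In the periodic case, I will show that $\bar\rho(\sigma_1) = \bar\rho(\sigma_4)$ and $\bar\rho(\sigma_2)=\bar\rho(\sigma_4)$, which together give what we want. The pair $\bar\rho(\sigma_1), \bar\rho(\sigma_4)$ consists of commuting finite-order elements, hence generates a finite abelian subgroup of $\bar\B_m$. The proof of Lemma~\ref{lemma:magic} provides the two structural facts I need: every finite subgroup of $\bar\B_m$ is, up to conjugacy, contained in $\langle\bar\alpha_1\rangle$ or $\langle\bar\alpha_2\rangle$ (hence is cyclic), and distinct elements of such a cyclic subgroup have distinct images in the abelianization of $\bar\B_m$, so are non-conjugate in $\bar\B_m$. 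Since $\sigma_1$ and $\sigma_4$ are conjugate in $\B_n$, their images lie in a common cyclic subgroup and are conjugate in $\bar\B_m$, and must therefore coincide. The same argument with $\sigma_2$ in place of $\sigma_1$ gives $\bar\rho(\sigma_2) = \bar\rho(\sigma_4)$. The step requiring care will be isolating these two finite-subgroup facts cleanly from the proof of Lemma~\ref{lemma:magic}, where they appear embedded in the larger pseudo-Anosov argument; both, however, are stated there with enough generality to apply verbatim.
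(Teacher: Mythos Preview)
Your proof is correct, and in the periodic case it is essentially the paper's argument with $\sigma_4$ playing the role the paper gives to $\sigma_3$: two commuting, conjugate periodic elements of $\bar\B_m$ sit in a common $\langle\bar\alpha_k\rangle$ and must coincide by the abelianization argument, forcing a $\sigma_i\sigma_j^{-1}$ into the kernel.

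The pseudo-Anosov case is where you diverge a bit. The paper argues more globally: using Lemma~\ref{lemma:magic} it shows that all the $\bar\rho(\sigma_{2i-1})$ share a common abelian centralizer, then that each $\bar\rho(\sigma_{2j})$ lies in it as well (here is where $n\ge 5$ enters), so the entire image of $\bar\rho$ is abelian and hence cyclic. You instead use $\bar\rho(\sigma_4)$ as a single pivot: its abelian centralizer contains both $\bar\rho(\sigma_1)$ and $\bar\rho(\sigma_2)$, so these commute, and then the braid relation collapses to $\bar\rho(\sigma_1)=\bar\rho(\sigma_2)$. This is a neat shortcut---you get equality of two generators directly rather than pairwise commutativity of all of them---and it lets you route both Nielsen--Thurston cases through the same conclusion $\sigma_1\sigma_2^{-1}\in\ker\bar\rho$, whereas the paper uses $\sigma_1\sigma_3^{-1}$ in the periodic case and the abelianization in the pseudo-Anosov case. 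Both approaches rely on the same structural inputs (Lemma~\ref{lemma:magic} and the classification of finite subgroups of $\bar\B_m$), so the difference is one of packaging rather than substance.
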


\begin{proof}

The hypothesis implies that $\bar \rho(\sigma_1)$ is either periodic or pseudo-Anosov.  We treat these two cases in turn.

Assume that $\bar \rho(\sigma_1)$ is periodic. Since $\bar \rho(\sigma_3)$ commutes with $\bar \rho(\sigma_1)$, together they generate a finite abelian subgroup of $\bar \B_m$.  As discussed in the proof of Lemma~\ref{lemma:magic}, any finite subgroup of $\bar \B_m$ is conjugate to a subgroup of either $\langle \bar \alpha_1\rangle$ or $\langle \bar \alpha_2\rangle$.  In particular, the group generated by $\bar \rho(\sigma_1)$ and $\bar \rho(\sigma_3)$ is conjugate to a subgroup of either $\langle \bar \alpha_1\rangle$ or $\langle \bar \alpha_2\rangle$.  \color{black}  Since no two distinct elements of $\langle \bar \alpha_1\rangle \cup \langle \bar \alpha_2\rangle$ are conjugate it follows that $\bar \rho(\sigma_1) = \bar \rho(\sigma_3)$.  In other words, $\sigma_1\sigma_3^{-1}\in \ker(\bar \rho)$. Since $n\geq 5$, the commutator subgroup $\B_n'$ is normally generated in $\B_n$ by $\sigma_1\sigma_3^{-1}$,  it follows that $\bar \rho$ has cyclic image.

Now assume that $\bar \rho(\sigma_1)$ is pseudo-Anosov. Since each $\bar \rho(\sigma_i)$ is conjugate to $\bar \rho(\sigma_1)$, the $\bar \rho(\sigma_i)$ are all pseudo-Anosov. Since each $\bar \rho(\sigma_{2i-1})$ commutes with $\bar \rho(\sigma_1)$, they all have the same centralizer by Lemma \ref{lemma:magic}. Since $n \geq 5$, each $\bar \rho(\sigma_{2j})$ commutes with some $\bar \rho(\sigma_{2i-1})$; hence all of the $\bar \rho(\sigma_i)$ have the same centralizer.  In particular, they all commute with each other.  It follows that the image of $\bar \rho$ is abelian.  Since the abelianization of $\B_n$ is cyclic, it follows that $\bar \rho$ has cyclic image. 
\end{proof}

\begin{proof}[Proof of Theorem~\ref{thm:castel}]

We consider three cases, according to whether $\bar \rho(z)$...
\begin{enumerate}
\item is pseudo-Anosov, 
\item is periodic, or
\item has non-empty canonical reduction system.
\end{enumerate}
We treat the three cases in turn.

\bigskip

\noindent \emph{Case 1: $\bar \rho(z)$ is pseudo-Anosov.}  By Proposition~\ref{prop:torsiontopA}, we have that $\bar \rho$ has cyclic image.  By Lemma~\ref{lem:cycliccyclic}, $\rho$ itself has cyclic image. 

\bigskip

\noindent \emph{Case 2:  $\bar \rho(z)$ is periodic.}  Assume $\bar \rho$ is not cyclic.  We will show that $\rho$ is equivalent to the identity.  By Proposition~\ref{prop:numthy}(2), we may assume that $\bar \rho(\alpha_1)$ is equal to $\bar \alpha_1^k$, where $\gcd(k,n)=1$.

 By Lemma~\ref{lem:emptyCRS}, we may assume that the canonical reduction system $M$ of $\bar \rho(\sigma_1)$ is non-empty.  Given this, the key to the proof is to show that $M$ is a single curve surrounding exactly two marked points.  
 
We first claim that $k \in \{\pm 1\}$.  Suppose for contradiction that $k \notin \{\pm 1\}$.  Since $\gcd(k,n)=1$, it follows that $k$ is a unit in $\Z/n$, and so it has a multiplicative inverse $j$.  The inverse $j$ does not lie in $\{\pm 1\}$, and so $\alpha_1^j\sigma_1\alpha_1^{-j}$ commutes with $\sigma_1$.  Applying $\bar \rho$ to the last statement, and using the fact that $\bar \rho(\alpha_1^j) = (\bar \alpha_1^k)^j=\bar \alpha_1$, we conclude that $\bar \alpha_1 \bar \rho(\sigma_1) \bar \alpha_1^{-1}$ commutes with $\bar \rho(\sigma_1)$.  The canonical reduction system of $\bar \alpha_1 \bar \rho(\sigma_1) \bar \alpha_1^{-1}$ is $\bar \alpha_1(M)$.  By the commuting relation, it follows that the multicurves $M$ and $\bar \alpha_1(M)$ have trivial geometric intersection.  This violates Proposition~\ref{prop:cake1}, which says that each curve of $M$ intersects its image $\bar \alpha_1(M)$ under $\bar \alpha_1$.  

By the previous claim, we may assume that $\bar \rho(\alpha_1)=\bar \alpha_1$ (if necessary, we post-compose with the inversion automorphism of $\B_n$). 

Next we claim that $M$ is a single curve surrounding exactly two marked points.   Since $\alpha_1^t \sigma_1 \alpha_1^{-t}$ commutes with $\sigma_1$ for $t \in \{2,\dots,n-2\}$, it follows that $\bar \rho(\alpha_1^t) \bar \rho(\sigma_1) \bar \rho(\alpha_1^{-t})$ commutes with $\bar \rho(\sigma_1)$ for $t \in \{2,\dots,n-2\}$.  By the previous claim, it further follows that $\bar \alpha_1^t \bar \rho(\sigma_1) \bar \alpha_1^{-t}$ commutes with $\bar \rho(\sigma_1)$ for $t \in \{2,\dots,n-2\}$.  This means that $i(\bar \alpha_1^t(M),M)=0$ for $t \in \{2,\dots,n-2\}$.  The claim now follows from Proposition~\ref{prop:cake2}. 

Let $c_i$ denote the canonical reduction system of $\bar \rho(\sigma_i)$ for $1 \leq i \leq n-1$.  By the previous paragraph, $c_1=M$ is a single curve surrounding exactly two marked points.  Since the $\sigma_i$ are all conjugate in $\B_n$, each $c_i$ is a single curve surrounding exactly two marked points.  Since $\sigma_1$ commutes with $\sigma_i$ for $i \geq 3$, it follows that $c_1$ and $c_i$ have trivial intersection for each such $i$.    Also, since $c_3$ is equal to $\bar \alpha_1^2(c_1)$, it must be that $c_1$ and $c_3$ are distinct ($\bar \alpha_1^2$ preserves no set of two marked points).

We claim that there is an $\ell$ so that $\bar \rho(\sigma_i) = H_{c_i}^\ell$ for all $i$.  As the $\sigma_i$ are pairwise conjugate, it suffices to show that $\bar \rho(\sigma_1)$ is a power of $H_{c_1}$.  Since $c_1$ is the canonical reduction system for $\bar \rho(\sigma_1)$, the mapping class $\bar \rho(\sigma_1)$ induces a mapping class $r$ of the component $R$ of $S_{0,n+1} \!\setminus\! c_1$ corresponding to the exterior of $c_1$.  The claim is equivalent to the statement that $r$ is trivial.  By the definition of the canonical reduction system, the mapping class $r$ is either pseudo-Anosov or periodic.  It cannot be pseudo-Anosov because it fixes $c_3$.  Thus $r$ is periodic, and in particular a rotation.  Since $r$ fixes $c_3$, it acts by rotation on the component $R'$ of $R \setminus c_3$ corresponding to the exterior of $c_3$.  As $r$ fixes the three punctures of $R'$ corresponding to $\partial D_n$, $c_1$, and $c_3$, the induced rotation on $R'$ is trivial.  Hence $r$ is trivial, as desired.

To complete the proof, we treat separately two cases, according to whether or not the $c_i$ are pairwise distinct.  Suppose first the $c_i$ are not all distinct.  By the previous claim it follows that $\sigma_i\sigma_j^{-1}$ lies in the kernel of $\bar \rho$.  Since for $n \geq 5$ the normal closure of any $\sigma_i\sigma_j^{-1}$ is $\B_n'$ whenever $i \neq j$, it then follows that $\bar \rho$, hence $\rho$, has cyclic image (Lemma~\ref{lem:cycliccyclic}).  

Now assume that the $c_i$ are pairwise distinct.  Since $\bar \rho(\sigma_1) = H_{c_1}^\ell$, we have that $\rho(\sigma_i) = H_{c_i}^{\ell}z^k$ for all $i$. Since $z$ is central and $\sigma_i$ and $\sigma_{i+1}$ satisfy the braid relation, so too must $H_{c_i}^{\ell}$ and $H_{c_{i+1}}^{\ell}$. Bell and the third author proved \cite[Lemma 4.9]{bellmargalit} that if $H_{c_i}^{\ell}$ and $H_{c_{i+1}}^{\ell}$ satisfy the braid relation and $c_i \neq c_{i+1}$ then $i(c_i,c_{i+1})=2$ and $\ell = \pm 1$.  Up to post-composition by an inner automorphism and the inversion automorphism, we may assume that the $c_i$ are standard and that $\ell=1$, that is, $\rho(\sigma_i) = \sigma_i z^k$.  Finally, we may modify $\rho$ by the central transvection by $z^{-k}$ in order to obtain the identity, completing the proof in the second case.

\bigskip

\noindent \emph{Case 3:  $\bar \rho(z)$ has non-empty canonical reduction system.}  For this case we work directly with $\rho$ instead of $\bar \rho$. Since $\rho(\B_n)$ lies in the centralizer of $\rho(z)$, it follows that, through $\rho$, the group $\B_n$ acts by permutations on the set of components of $M$.  

We claim the action of $\B_n'$ on the set of components of $M$ is trivial.  The number of components of $M$ is at most $n-2$.  Lin proved \cite[Theorem A(c)]{linbp} that any homomorphism $\B_n' \to S_k$ with $k < n$ is trivial if $n \geq 5$; thus the action of $\B_n$ on the set of components of $M$ is cyclic. The claim follows.

Let $c$ be a component of $M$, and say that $c$ surrounds exactly $p$ marked points.  By the previous claim, $\rho(\B_n')$ lies in the group $\Fix_{\B_n}(c)$.  Lin proved that for $n \geq 5$ any homomorphism $\B_n' \to \B_k$ with $k < n$ is trivial  \cite[Theorem A(c)]{linbp}.  It follows that the groups $\Pi_i^c \circ \rho(\B_n')$ and $\Pi_e^c \circ \rho(\B_n')$ are trivial, as they are the images of homomorphisms that satisfy the hypotheses of Lin's theorem.  Since the map $\Pi_i^c \times \Pi_e^c$ is injective (Lemma~\ref{lem:pkg}), it follows that $\rho(\B_n')$ is trivial, and so $\rho$ has cyclic image, as desired.
\end{proof}


\subsection{The case of four strands}

In this section we state and prove the following analogue of Castel's theorem for the case of $\B_4$.  A new proof of this theorem was recently given by Orevkov \cite[Theorem 1.7]{orevkov2}

\begin{theorem}
\label{thm:castel4}
Let $\rho : \B_4 \to \B_4$ be a homomorphism.  Then either
\begin{itemize}
\item $\rho$ factors through the exceptional homomorphism $\B_4 \to \B_3$ or
\item $\rho$ is equivalent to either the trivial map or the identity map.  
\end{itemize}
\end{theorem}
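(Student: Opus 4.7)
The plan is to follow the three-case analysis from the proof of Theorem~\ref{thm:castel}, splitting on the Nielsen--Thurston type of $\bar\rho(\alpha_1)\in\bar\B_4$. The key new feature for $n=4$ is that the normal closure of $\sigma_1\sigma_3^{-1}$ in $\B_4$ is the kernel of the standard surjection $\pi:\B_4\to\B_3$, which is strictly smaller than $\B_4'$. Consequently, wherever a step in the proof of Theorem~\ref{thm:castel} concluded ``cyclic image'' by forcing a conjugate of $\sigma_1\sigma_3^{-1}$ into $\ker\rho$, the corresponding step here will instead yield the alternative conclusion that $\rho$ factors through $\pi$.

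In the pseudo-Anosov case, Proposition~\ref{prop:torsiontopA} and Lemma~\ref{lem:cycliccyclic} apply verbatim to show that $\rho$ has cyclic image, hence is equivalent to the trivial map. In the periodic case, I first prove an $n=4$ analogue of Proposition~\ref{prop:numthy}: using that $\bar\alpha_1$ has order $4$ and $\bar\alpha_2$ has order $3$ in $\bar\B_4$, together with the central relation $\alpha_1^4=\alpha_2^3=z$, the argument of Proposition~\ref{prop:numthy} shows that either $\bar\rho(\alpha_1)=\bar\alpha_1^{\pm 1}$ after conjugation, or some non-central power of $\alpha_1$ or $\alpha_2$ lies in $\ker\bar\rho$. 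In the latter situation I apply Lemma~\ref{lem:wscc}: depending on whether the resulting well-suited arcs $c$ and $f(c)$ are disjoint or share an endpoint, the element $H_cH_{f(c)}^{-1}$ is conjugate to $\sigma_1\sigma_2^{-1}$ (forcing $\rho$ to be cyclic) or to $\sigma_1\sigma_3^{-1}$ (forcing $\rho$ to factor through $\pi$). Once reduced to $\bar\rho(\alpha_1)=\bar\alpha_1^{\pm 1}$, Propositions~\ref{prop:cake1} and~\ref{prop:cake2} pin down the canonical reduction system of $\bar\rho(\sigma_1)$ as a single curve surrounding two marked points, and the braid-relation argument from Case~2 of the proof of Theorem~\ref{thm:castel} (via \cite[Lemma 4.9]{bellmargalit}) then identifies $\rho$ with the identity up to equivalence.

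The reducible case is the main obstacle, and it is in this case that the exceptional homomorphism $\B_4\to\B_3$ emerges. Let $M$ be the canonical reduction system of $\rho(\alpha_1)$, which equals that of $\rho(z)$; then $\rho(\B_4)$ permutes the components of $M$, and since $M$ has at most $n-2=2$ components, the induced action factors through $\B_4\to S_4\to S_{|M|}$. In particular, the commutator subgroup $\B_4'$ fixes each component of $M$, so Lemma~\ref{lem:pkg} decomposes $\rho|_{\B_4'}$ into interior and exterior components mapping into braid groups with at most three strands. Unlike the $n\geq 5$ case, Lin's triviality result for $\B_n'\to\B_k$ with $k<n$ does not apply here, so in each sub-case I examine the possible images of $\sigma_1$ and $\sigma_3$ directly, with the goal of forcing the identification $\rho(\sigma_1)=\rho(\sigma_3)$, which is exactly what is needed for $\rho$ to factor through $\pi$.

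The most delicate sub-case, which I expect to require the most work, is when $M$ consists of two curves each surrounding two marked points that are swapped by $\rho(\B_4)$; the remaining sub-cases (one curve surrounding two or three marked points) can be handled by a direct interior/exterior analysis using Lemma~\ref{lem:pkg}. For the swap sub-case I plan to pass to the index-two even braid subgroup $\B_4^2$ (which fixes both components), invoke the $n=4$ version of the $\B_n^2$-classification developed in Section~\ref{sec:castel2}, and then check compatibility across the swap to obtain the desired identification $\rho(\sigma_1)=\rho(\sigma_3)$.
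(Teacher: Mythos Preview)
Your outline follows the paper's three-case strategy, but there is a genuine gap in the periodic case. You invoke Proposition~\ref{prop:cake2} to conclude that the canonical reduction system $M$ of $\bar\rho(\sigma_1)$ is a single curve surrounding two marked points. That proposition requires $n\geq 5$; for $n=4$ the argument breaks down and there is an additional possibility: $M$ may consist of \emph{two} curves, each surrounding two marked points (after conjugation, $M=\{c_1,c_3\}$ or $\{c_0,c_2\}$). This is exactly the case that carries the real content of the $n=4$ proof. The paper handles it by a separate argument using the Garside element $\Delta$ and the half-twist $\sigma_0=\alpha_1\sigma_3\alpha_1^{-1}$: one shows that conjugation by $\rho(\Delta)$ would have to simultaneously swap $\sigma_1\leftrightarrow\sigma_3$ and fix $\sigma_0$, forcing $\rho(\Delta)\equiv\Delta\bmod Z(\B_4)$, which contradicts the fact that $\Delta$ does not commute with $\sigma_0$. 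Without confronting this two-curve configuration, your periodic case is incomplete.

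Two further points. First, the ``two curves swapped'' sub-case you anticipate lives in the reducible case, where the multicurve in question is the canonical reduction system of $\rho(z)$; that is a different object from the canonical reduction system of $\bar\rho(\sigma_1)$ above, and the difficult two-curve phenomenon actually arises in the periodic case, not the reducible one. In the reducible case the paper instead reduces to the classification of homomorphisms $\B_4\to\B_3$ (Proposition~\ref{prop:ex}) when the reducing curve surrounds three marked points; the remaining reducible sub-cases have abelian targets and are easy. Second, your plan to invoke an ``$n=4$ version of the $\B_n^2$-classification'' is not supported by the paper: Theorem~\ref{thm:castel2} is stated and proved only for $n\geq 5$, so you would need to supply such a result independently.
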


The proof follows along the lines of our proof of Theorem~\ref{thm:castel}.  There are three ways in which the argument differs.  The first issue is that the normal closure of $\sigma_1\sigma_3^{-1}$ in $\B_4$ is not the commutator subgroup, but rather the kernel of the exceptional homomorphism $\B_4 \to \B_3$.  Since we allow for this homomorphism in the statement of Theorem~\ref{thm:castel4}, this means we can use the same arguments as in the proof of Theorem~\ref{thm:castel}, just with a different conclusion.  Specifically, this issue arises in Lemma~\ref{lem:emptyCRS}.

The second issue is that Proposition~\ref{prop:cake2} does not hold as stated for $n=4$.  We will require a specialized version for $n=4$.  There is one additional possibility, namely, that $M$ is a multicurve with two components, each surrounding two marked points.   The proof of this version is essentially the same as for the $n \geq 5$ case, as per Proposition~\ref{prop:cake2}.  

The third issue is that, in the case where we have a cabling $\B_4 \to \B_4$ (meaning that the image preserves a multicurve), we cannot conclude that the image is cyclic, again because of the exceptional homomorphism $\B_4 \to \B_3$.  To deal with this, we first classify homomorphisms $\B_4 \to \B_3$.  We begin with this classification, and then use it to prove Theorem~\ref{thm:castel4}.

Before proceeding to the classifications of homomorphisms $\B_4 \to \B_3$ we first prove the following lemma, which is the analogue of Theorem~\ref{lem:cycliccyclic} for the case $n=4$.

\begin{lemma}
\label{lem:cycliccyclic4}
Let $m \geq 1$ and let $\rho : \B_4 \to \B_m$ be a homomorphism.  Suppose that some $\sigma_i\sigma_j^{-1}$ lies in the kernel of $\bar \rho$.  Then either $\rho$ is cyclic or it factors through the exceptional map $\B_4 \to \B_3$.
\end{lemma}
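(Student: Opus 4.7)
The plan is to split according to whether the hypothesis involves an adjacent pair ($|i-j|=1$) or the unique non-adjacent pair ($\{i,j\}=\{1,3\}$), after first observing that $m\le 2$ is trivial (since $\B_m$ is then abelian or trivial, so $\rho$ is automatically cyclic). For the main argument I will assume $m\ge 3$ and let $z$ denote a generator of $Z(\B_m)\cong\Z$.

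\textbf{Adjacent case.} I would argue by a direct normal-closure computation. Quotienting $\B_4$ by the normal closure of $\sigma_i\sigma_j^{-1}$ with $|i-j|=1$ identifies two neighboring standard generators, and by connectedness of the defining generating set all four generators then collapse to one, yielding the abelianization $\Z$. Hence $\bar\rho(\B_4')=1$, so $\bar\rho$ has cyclic image, and Lemma~\ref{lem:cycliccyclic} promotes this to $\rho$ itself having cyclic image.

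\textbf{Non-adjacent case.} The hypothesis now reads $\rho(\sigma_1\sigma_3^{-1})=z^k$ for some integer $k$. Substituting $\rho(\sigma_1)=z^k\rho(\sigma_3)$ into the image of the relation $\sigma_1\sigma_2\sigma_1=\sigma_2\sigma_1\sigma_2$ gives
\[
z^{2k}\,\rho(\sigma_3)\rho(\sigma_2)\rho(\sigma_3) \;=\; z^k\,\rho(\sigma_2)\rho(\sigma_3)\rho(\sigma_2),
\]
and the image of the other braid relation $\sigma_2\sigma_3\sigma_2=\sigma_3\sigma_2\sigma_3$ identifies the two braided triples on either side, so $z^k=1$ and hence $k=0$. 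Therefore $\sigma_1\sigma_3^{-1}\in\ker\rho$; since this element normally generates the kernel of the standard surjection $\B_4\to\B_3$, $\rho$ factors through that surjection, as desired.

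The main obstacle is precisely this non-adjacent case. A purely group-theoretic input only detects that $\rho(\sigma_1\sigma_3^{-1})$ is central, and promoting it from central to trivial requires genuinely using the specific braid relations in the target $\B_m$ rather than formal facts about centers or abelianizations.
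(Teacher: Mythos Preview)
Your proof is correct. The adjacent case matches the paper's argument essentially verbatim (modulo the small slip ``all four generators''---there are only three standard generators of $\B_4$). In the non-adjacent case your route differs from the paper's: you show $z^k=1$ by substituting $\rho(\sigma_1)=z^k\rho(\sigma_3)$ into the image of the $\sigma_1\sigma_2\sigma_1=\sigma_2\sigma_1\sigma_2$ relation and then invoking the $\sigma_2\sigma_3\sigma_2=\sigma_3\sigma_2\sigma_3$ relation to cancel the two braided triples. The paper instead observes that $\sigma_1\sigma_3^{-1}\in\B_4'$, so $\rho(\sigma_1\sigma_3^{-1})$ lies in $\B_m'$, and since $Z(\B_m)\cap\B_m'=1$ the central element $\rho(\sigma_1\sigma_3^{-1})$ must be trivial. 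The paper's argument is more conceptual and would apply verbatim to any element of $\B_4'$ mapping to the center; your direct computation is tailored to this specific element but is self-contained and avoids appealing to the structure of $\B_m'\cap Z(\B_m)$. Both are short and either would serve.
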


\begin{proof}

Suppose first that $\sigma_1\sigma_{2}^{-1}$ or $\sigma_2\sigma_{3}^{-1}$ lies in the kernel of $\bar \rho$.  The normal closure of either contains $\B_4'$.  It follows that the image of $\bar \rho$ is cyclic.  As in the proof of Lemma~\ref{lem:cycliccyclic}, the image of $\rho$ is cyclic.

If $\sigma_1\sigma_3^{-1}$ lies in the kernel of $\bar \rho$, then $\rho(\sigma_1\sigma_3^{-1})$ is central in $\B_m$.  Since $\rho(\B_4')$ is contained in $\B_m'$ (the image of a product of commutators is a product of commutators) and since $Z(B_m) \cap \B_m'$ is trivial, it follows that $\sigma_1\sigma_3^{-1}$ lies in the kernel of $\rho$.  Thus, $\rho$ factors through the exceptional homomorphism $\B_4 \to \B_3$.  
\end{proof}

In the proof of the following proposition, we will use the fact that braid groups are Hopfian, which means that every surjective endomorphism is an automorphism.  One way to see this is to use the Magnus embedding $\B_n \to \Aut(F_n)$, the fact that $F_n$ is residually finite, the theorem of Baumslag that the group of automorphisms of a finitely generated residually finite group is residually finite \cite[Theorem 1]{gb}, and the fact that finitely generated residually finite groups are Hopfian \cite[Theorem IV.4.10]{LS}.  

\begin{proposition}
\label{prop:ex}
Let $\rho : \B_4 \to \B_3$ be a homomorphism.  Then either $\rho$ has cyclic image or it factors through the exceptional map $\B_4 \to \B_3$.  Further, if $\rho$ is surjective, then it is equivalent to the exceptional map $\B_4 \to \B_3$. 
\end{proposition}

\begin{proof}

The second statement follows from the first statement and the fact that $\B_3$ is Hopfian.  It remains to prove the first statement.  This proof follows the same outline as the proof of Theorem~\ref{thm:castel}.  We consider three cases for $\bar \rho(z)$, according to whether it is pseudo-Anosov, periodic, or reducible.  

If $\bar \rho(z)$ is pseudo-Anosov then it follows from Proposition~\ref{prop:torsiontopA} and Lemma \ref{lem:cycliccyclic} that $\rho$ has cyclic image.  

If $\bar \rho(z)$, hence $\bar \rho(\alpha_1)$, is periodic, then it follows that $\bar \rho(\alpha_1)$ has order 1, 2, or 3.  We treat these three possibilities in turn.  By Lemma~\ref{lem:cycliccyclic4}, it suffices to show in each case that some $\sigma_i\sigma_j^{-1}$ lies in the kernel of $\bar \rho$.  

If $\bar \rho(\alpha_1)$ has order 1, this means that $\alpha_1$ lies in the kernel of $\bar \rho$, and (as in the proof of Proposition~\ref{prop:numthy}), the kernel of $\bar \rho$ contains $\sigma_1\sigma_2^{-1}$, as desired.  If $\bar \rho(\alpha_1)$ has order 2, $\alpha_1^2$ lies in the kernel of $\bar \rho$, and it follows that the kernel of $\bar \rho$ contains $\sigma_1\sigma_3^{-1}$, as desired.  If $\bar \rho(\alpha_1)$ has order 3, we have that $\alpha_1^3$ lies in the kernel of $\bar \rho$.  Thus there is a conjugate of $\sigma_1\sigma_2^{-1}$ in the kernel of $\bar \rho$, again as desired.

Finally, suppose that $\bar \rho(z)$, hence $\rho(z)$, is reducible.   In this case, there is a multicurve $M$, the canonical reduction system of $\rho(z)$, preserved by $\rho(\B_4)$.  Since $M$ lies in $\D_3$, it must be that $M$ has a single component with exactly two marked points in the interior.  Further, $\rho(\B_4)$ lies in $\Fix_{\B_3}(M)$, which by Lemma~\ref{lem:pkg} is isomorphic to $\B_{1,1} \times \B_2 = \PB_2 \times \B_2 \cong \Z \times \Z$.  In particular, the image of $\rho$ is abelian, hence cyclic.  This completes the proof.  
\end{proof}

We are now ready for the proof of Theorem~\ref{thm:castel4}.  The most difficult part of the proof is the case where $\rho$ is a cabling and the reducing multicurve has two components (this is what comes from the extension of Proposition~\ref{prop:cake2} to the case $n=4$).  This case parallels Case~3 of the proof of Theorem~\ref{thm:main}, which is the most difficult part of that proof.  To deal with this case, we study an element $\varphi \in \B_4$, which is conjugate to---but not equal to---$\alpha_1^2$.  The interplay between these periodic elements leads to a contradiction.

\begin{proof}[Proof of Theorem~\ref{thm:castel4}]

Again, there are three possibilities for $\bar \rho(z)$: it can be pseudo-Anosov, periodic, or reducible.

In the pseudo-Anosov case, the argument is exactly the same as in the proof of Theorem~\ref{thm:castel}.  In the reducible case, the argument is essentially the same, except we must apply Proposition~\ref{prop:ex} in the case where the reducing multicurve has exactly one component, which has exactly three marked points in the interior.

It remains to consider the case where $\rho(z)$, hence $\rho(\alpha_1)$, is periodic.  We may assume that $\rho$ does not have cyclic image and that it does not factor through the exceptional homomorphism $\B_4 \to \B_3$.  By Lemma~\ref{lem:cycliccyclic4}, this is equivalent to the assumption that no $\sigma_i\sigma_j^{-1}$ lies in $\ker \bar \rho$.  

We claim that, up to conjugacy, we have $\bar \rho(\alpha_1)=\bar \alpha_1$.  To prove this, we use the argument from the proof of Proposition~\ref{prop:numthy} in order to rule out all other possibilities.  First, if $\bar \rho(\alpha_1)$ were conjugate to $\bar \alpha_2^j$ for some $j$, then $\alpha_1^3$ would be in the kernel of $\bar \rho$.  The argument of Lemma~\ref{lem:wscc} then shows that $\sigma_1\sigma_2^{-1}$ lies in the kernel of $\bar \rho$, contrary to assumption.  Next, if $\bar \rho(\alpha_1)$ were conjugate to $\bar \alpha_1^2$, then $\sigma_1\sigma_3^{-1}$ would be in the kernel of $\bar \rho$, again contrary to assumption.  The only remaining possibilities are that $\bar \rho(\alpha_1)$ is conjugate to $\bar \alpha_1^{\pm 1}$.  Because the inversion automorphism of $\B_4$ interchanges $\alpha_1$ and $\alpha_1^{-1}$, the claim follows.

We now claim that $\bar \rho(\sigma_1)$ has nonempty canonical reduction system $M$.  By the same argument as in the proof of Lemma~\ref{lem:emptyCRS}, we see that if $\bar \rho(\sigma_1)$ were periodic then $\sigma_1\sigma_3^{-1}$ would lie in $\ker \rho$.  For the case where $\bar \rho(\sigma_1)$ is pseudo-Anosov, we use a variant of the argument used to prove Lemma~\ref{lem:emptyCRS} (this variant also applies in the case $n \geq 5$).  Since $\bar \rho(\sigma_1)$ and $\bar \rho(\sigma_3)$ are commuting, conjugate pseudo-Anosov mapping classes, they have the same invariant foliations and stretch factors.  It follows that $\bar \rho(\sigma_1 \sigma_3^{-1})$, hence $\rho(\sigma_1 \sigma_3^{-1})$ is periodic.  But since $\sigma_1\sigma_3^{-1}$ lies in $\B_4'$, it must be that $\rho(\sigma_1 \sigma_3^{-1})$ lies in $\B_4'$.  The only periodic element of $\B_n'$ is the identity.  This implies that $\rho$ factors through the exceptional map. 

There are three possibilities for $M$:
\begin{enumerate}
\item a single curve surrounding two marked points,
\item a single curve surrounding three marked points, or
\item a pair of curves surrounding two marked points each.
\end{enumerate}

In the first case, we see that $\rho$ is equivalent to the identity map, just as in the proof of Theorem~\ref{thm:castel}.  We treat the other cases in turn.  In the second case we derive a contradiction, and in the third case we show that $\rho$ factors through the standard map $\B_4 \to \B_3$.

Suppose we are in the second case, so $M$ consists a single curve $a_1$ surrounding three marked points. Since $\bar \rho(\alpha_1^2)$ conjugates $\bar \rho(\sigma_1)$ to $\bar \rho(\sigma_3)$, the canonical reduction system of $\bar \rho(\sigma_3)$ is a single curve $a_3$ surrounding three marked points. Since  $\sigma_1$ and $\sigma_3$ commute we have $a_1=a_3=a$.  This implies that $\bar \rho(\alpha_1^2)$ also preserves $a$. This is impossible because $\bar \rho(\alpha_1^2)=\bar \alpha_1^2$ does not preserve any set of three marked points.

We proceed to the third case.  Up to conjugation in $\B_4$, we may assume that the canonical reduction system of $\bar \rho(\sigma_1)$ consists of the standard curves $c_1$ and $c_3$.  Because we are modifying $\rho$, we now only have that $\bar \rho(\alpha_1)$ is conjugate to $\bar \alpha_1$ instead of being equal.

Now, the braid $\bar \rho(\sigma_1)$ acts on $\{c_1,c_3\}$ and so $\bar \rho(\sigma_1^2)$ fixes both curves.  It follows that $\bar \rho(\sigma_1^2) = \sigma_1^a\sigma_3^b$ for some nonzero $a$ and $b$ (here $\sigma_1$ and $\sigma_3$ are considered as elements of $\bar \B_4$).

We claim that $a=b$.  To this end, let $\varphi$ be a conjugate of $\alpha_1^2$ that commutes with $\sigma_1$ (this exists because $\alpha_1^2$ itself commutes with $\sigma_2$, and the latter is conjugate to $\sigma_1$).  Since $\varphi$ commutes with $\sigma_1$, its image $\bar \rho(\varphi)$ acts on $\{c_1,c_3\}$.  Since $\bar \rho(\varphi)$ is conjugate to $\bar \alpha_1^2$, it cannot preserve two disjoint curves, which implies that $\bar \rho(\varphi)$ must interchange $c_1$ and $c_3$.  We now see that 
\[
\bar \rho(\sigma_1^2)=\bar \rho(\varphi \sigma_1^2 \varphi^{-1})=\sigma_1^b\sigma_3^a.
\]
Since $\sigma_1$ and $\sigma_3$ generate a free abelian group, the claim follows.

Now, $\bar \rho(\sigma_1^2)$ is conjugate to $\bar \rho(\sigma_3^2)$ and commutes with it.  But the only conjugate of $(\sigma_1\sigma_3)^a$ that commutes with it is itself (here we use that fact that commuting elements have disjoint canonical reduction systems).  It follows that $(\sigma_1\sigma_3^{-1})^2$ is in the kernel of $\bar \rho$.  Thus, $\rho(\sigma_1\sigma_3^{-1})$ is a torsion element in $\B_4$.  The latter is torsion free, and so in fact $\sigma_1\sigma_3^{-1}$ lies in the kernel of $\rho$.  This means that $\rho$ factors through the standard map $\B_4 \to \B_3$.  The theorem follows.
\end{proof}


\section{The group of even braids}
\label{sec:castel2}

Recall that $\B_n^2$ is the group of even braids.  The goal of this section is to prove Theorem~\ref{thm:castel2}, our extension of Theorem~\ref{thm:castel}.  For the statement, a central transvection of a map $\rho: \B_n^2 \to \B_n$ is a map $\rho^t : \B_n^2 \to \B_n$ given by $\rho^t(g) = \rho(g)t^{L^2(g)}$ where $t$ lies in the centralizer of $\rho(\B_n^2)$ and $L^2 : \B_n^2 \to \Z$ is the abelianization (cf. Lemma~\ref{lem:bn2abel}).  

\begin{theorem}
\label{thm:castel2}
Let $n\geq 5$, and let $\rho: \B_n^2\to \B_n$ be a homomorphism. Then $\rho$ is almost-conjugate to a central transvection of either the trivial map or the standard inclusion. 
\end{theorem}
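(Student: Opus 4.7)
The plan is to mirror the three-case structure used in our proof of Theorem~\ref{thm:castel}, analyzing $\rho$ according to the Nielsen--Thurston type of $\bar\rho(z)$. Two substitutions accommodate the fact that $\B_n^2$ is a proper subgroup of $\B_n$: we work with $\sigma_1^2$ (and with $\sigma_i^2$ in general) in place of $\sigma_1$, and we use whichever of $\alpha_1$ or $\alpha_2$ lies in $\B_n^2$---let $\alpha = \alpha_1$ for $n$ odd and $\alpha = \alpha_2$ for $n$ even, so that $\alpha \in \B_n^2$ and $\bar\alpha$ is a primitive rotation in $\bar\B_n$. One preliminary observation we use throughout: for $n \geq 5$ the abelianization $(\B_n^2)^{ab}$ remains cyclic. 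Indeed, by Gorin--Lin, $\B_n'$ is perfect for $n \geq 5$, so $[\B_n^2, \B_n^2] = \B_n'$, and hence $(\B_n^2)^{ab} \cong \B_n^2/\B_n' \cong \Z$.

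First I would settle the pseudo-Anosov and reducible cases. If $\bar\rho(z)$ is pseudo-Anosov, then Lemma~\ref{lemma:magic} places $\bar\rho(\B_n^2)$ in an abelian centralizer, so $\bar\rho$ factors through $(\B_n^2)^{ab} \cong \Z$ and is cyclic; the argument of Lemma~\ref{lem:cycliccyclic} promotes this to cyclic $\rho$, equivalent to the trivial homomorphism. If $\rho(z)$ is reducible with canonical reduction system $M$, then $\rho(\B_n^2)$ permutes the components of $M$; since $|M| \leq n-2 < n$, Lin's theorem \cite[Theorem A]{linbp} forces the induced action of $\B_n' \subseteq \B_n^2$ to be trivial. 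Fixing a component $c$ of $M$ and applying the interior/exterior decomposition of Lemma~\ref{lem:pkg} to $\rho|_{\B_n'}$, whose factors land in braid groups $\B_k$ with $k < n$, another invocation of Lin's theorem yields $\rho(\B_n') = 1$. Hence $\rho$ factors through $\B_n^2/\B_n' \cong \Z$ and has cyclic image.

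The periodic case is where the argument most closely parallels Case 2 of Theorem~\ref{thm:castel}. Assuming $\rho$ is not cyclic, I would adapt Proposition~\ref{prop:numthy}(2)---using the identity $(\sigma_1\sigma_3^{-1})^2 = \sigma_1^2\sigma_3^{-2}$ so that the well-suited curve argument takes place inside $\B_n^2$---to conclude that $\bar\rho(\alpha) = \bar\alpha^{\pm 1}$. After post-composing with the inversion automorphism of $\B_n$ if needed, arrange $\bar\rho(\alpha) = \bar\alpha$. An analog of Lemma~\ref{lem:emptyCRS}, with $\sigma_i^2$ in place of $\sigma_i$, shows that the canonical reduction system $M$ of $\bar\rho(\sigma_1^2)$ is nonempty. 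Using the commutation relations for $\alpha^t\sigma_1^2\alpha^{-t}$, Proposition~\ref{prop:cake2} then forces $M$ to be a single curve surrounding exactly two marked points. As in Case 2 of Theorem~\ref{thm:castel}, the canonical reduction systems $c_i$ of the $\bar\rho(\sigma_i^2)$ are pairwise distinct round curves, and \cite[Lemma 4.9]{bellmargalit} gives $\rho(\sigma_i^2) = \sigma_i^{\pm 2} z^{m_i}$. A transvection together with the constraint $\bar\rho(\alpha) = \bar\alpha$ normalizes the $m_i$ and pins down the sign, yielding $\rho(\sigma_i^2) = \sigma_i^2$; combined with $\rho(\alpha) = \alpha$ (modulo center), this identifies $\rho$ with the standard inclusion on a Reidemeister--Schreier generating set for $\B_n^2$.

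The principal obstacle I anticipate is transporting the well-suited curve argument of Lemma~\ref{lem:wscc} into the $\B_n^2$ setting: a half-twist $H_c$ does not lie in $\B_n^2$, so one cannot directly form the commutator $H_cfH_c^{-1}f^{-1}$ inside $\B_n^2$ and must instead work with a full twist $H_c^2$ or a product of two half-twists. The output is a product of elements of the form $\sigma_i^2\sigma_j^{-2}$ or $\sigma_i\sigma_k\sigma_j^{-1}\sigma_l^{-1}$, and one must verify that the resulting normal closure inside $\B_n^2$ still contains $\B_n'$ for $n \geq 5$. Once this technical point is settled, all remaining steps transfer from the proof of Theorem~\ref{thm:castel} by direct substitution of $\sigma_i^2$ for $\sigma_i$ and $\alpha$ for $\alpha_1$.
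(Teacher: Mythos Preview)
Your three-case skeleton matches the paper's, and the pseudo-Anosov and reducible cases are handled correctly. The gap is in the periodic case, at the step where you invoke \cite[Lemma 4.9]{bellmargalit} to conclude $\rho(\sigma_i^2) = \sigma_i^{\pm 2} z^{m_i}$. That lemma says that if $H_{c_i}^\ell$ and $H_{c_{i+1}}^\ell$ satisfy the \emph{braid relation}, then $\ell = \pm 1$. In the $\B_n$ proof this is available because $\sigma_i$ and $\sigma_{i+1}$ braid, so their $\rho$-images do. But $\sigma_i^2$ and $\sigma_{i+1}^2$ do \emph{not} satisfy the braid relation, so from $\rho(\sigma_i^2) = H_{c_i}^\ell z^s$ you cannot conclude anything about $\ell$ this way. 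The phrase ``direct substitution of $\sigma_i^2$ for $\sigma_i$'' hides exactly this failure.

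The paper repairs this with two additional ingredients you are missing. First, it works with the elements $\sigma_1\sigma_4^{-1}$ and $\sigma_2\sigma_4^{-1}$ of $\B_n'$, which \emph{do} satisfy the braid relation (since $\sigma_4$ commutes with both $\sigma_1$ and $\sigma_2$). Second, it proves a square-root lemma (Lemma~\ref{lem:roots}): any $k$th root of $(H_cH_d^{-1})^\ell$ in $\B_n$, with $c,d$ disjoint curves around two marked points, must equal $(H_cH_d^{-1})^{\ell/k}$. From $\rho(\sigma_1\sigma_4^{-1})^2 = \rho(\sigma_1^2\sigma_4^{-2}) = (H_{c_1}H_{c_4}^{-1})^\ell$ this forces $\ell$ even and $\rho(\sigma_1\sigma_4^{-1}) = (H_{c_1}H_{c_4}^{-1})^{\ell/2}$; now Bell--Margalit applied to the braiding pair $\rho(\sigma_1\sigma_4^{-1})$, $\rho(\sigma_2\sigma_4^{-1})$ gives $\ell/2 = \pm 1$. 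The same square-root lemma is then used again to pin down $\rho(\sigma_i\sigma_j^{-1})$ on the Gorin--Lin generators, which is what actually identifies $\rho$ with the inclusion on $\B_n'$---your appeal to ``$\rho(\alpha)=\alpha$ modulo center'' and a Reidemeister--Schreier generating set does not by itself do this. The obstacle you flag (transporting Lemma~\ref{lem:wscc} into $\B_n^2$) is comparatively minor and is handled by Lemma~\ref{lem:conjbn'} and Lemma~\ref{lem:bn2nc}; the real new content is the detour through $\sigma_i\sigma_j^{-1}$ and the uniqueness of roots.
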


As for the case of the full braid group we denote by $\bar \rho$ the homomorphism $\bar \rho : \B_n^2 \to \bar \B_n$ associated to a homomorphism $\rho : \B_n^2\to \B_n$.  Before proceeding to the proof of Theorem~\ref{thm:castel2} we require a series of lemmas.

\begin{lemma}
\label{lem:bn2abel}
Let $n\geq 2$. The commutator subgroup of $\B_n^2$ is $\B_n'$ and the abelianization $\B_n^2/\B_n'$ is isomorphic to $\Z$.
\end{lemma}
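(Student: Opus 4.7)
The plan is to reduce everything to one explicit commutator identity. First, the inclusion $[\B_n^2,\B_n^2]\subseteq \B_n'$ is automatic from $\B_n^2\le\B_n$. Second, the abelianization $\B_n\to\B_n/\B_n'\cong\Z$ (sending each $\sigma_i$ to $1$) restricts on $\B_n^2$ to a homomorphism whose image is $2\Z$ and whose kernel is $\B_n'\cap\B_n^2=\B_n'$, so $\B_n^2/\B_n'\cong\Z$ independently of any commutator claim. The entire substance of the lemma therefore reduces to the reverse inclusion $\B_n'\subseteq [\B_n^2,\B_n^2]$.

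To prove this inclusion, I would use two structural facts. First, $\B_n^2$ is normal in $\B_n$ (as the kernel of the mod-$2$ abelianization), so conjugation by any element of $\B_n$ preserves $[\B_n^2,\B_n^2]$; in other words $[\B_n^2,\B_n^2]$ is itself a normal subgroup of $\B_n$. Second, by the classical fact recorded in \cite[Remark 1.10]{linbp}, $\B_n'$ is the normal closure in $\B_n$ of the single element $\sigma_1\sigma_2^{-1}$. Combined, these two facts reduce the whole problem to exhibiting $\sigma_1\sigma_2^{-1}$, or any $\B_n$-conjugate of it, as a single commutator of two elements of $\B_n^2$.

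The candidate I would try is $[\sigma_1\sigma_2,\;\sigma_4\sigma_1^{-1}]$. Both factors have even exponent sum (namely $2$ and $0$), so they lie in $\B_n^2$. A direct manipulation using the far-commutation relations $\sigma_1\sigma_4=\sigma_4\sigma_1$ and $\sigma_2\sigma_4=\sigma_4\sigma_2$ lets one cancel both occurrences of $\sigma_4^{\pm 1}$, and then the braid-relation consequence $\sigma_1\sigma_2\sigma_1^{-1}\sigma_2^{-1}=\sigma_2^{-1}\sigma_1$ collapses the whole commutator to $\sigma_2^{-1}\sigma_1$. This is $\sigma_1^{-1}(\sigma_1\sigma_2^{-1})\sigma_1$, i.e.\ a $\B_n$-conjugate of $\sigma_1\sigma_2^{-1}$, so normality of $[\B_n^2,\B_n^2]$ in $\B_n$ yields $\sigma_1\sigma_2^{-1}\in [\B_n^2,\B_n^2]$ and completes the argument.

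The main obstacle is this last step: one has to locate a pair of even braids whose commutator is a conjugate of $\sigma_1\sigma_2^{-1}$ in the full group. The candidate above genuinely uses $\sigma_4$, which both pairs with $\sigma_1^{-1}$ to form an element of $\B_n^2$ and commutes past $\sigma_1,\sigma_2$ during the reduction, leaving behind the single commutator $[\sigma_1,\sigma_2]$. The case $n=2$ is immediate since $\B_2^2=2\Z$ is already abelian, so all of the real work is in producing this one clean commutator identity.
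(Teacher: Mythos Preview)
Your argument is correct and takes a genuinely different route from the paper. The paper's proof is shorter and purely structural: from the short exact sequence $1\to\B_n'\to\B_n^2\to 2\Z\to 1$ it applies right-exactness of abelianization together with the fact that $\B_n'$ is perfect to conclude that $(\B_n^2)^{ab}\cong 2\Z$, which forces $[\B_n^2,\B_n^2]=\B_n'$. You instead exhibit a $\B_n$-conjugate of $\sigma_1\sigma_2^{-1}$ as an explicit commutator $[\sigma_1\sigma_2,\sigma_4\sigma_1^{-1}]$ of two even braids, and then combine normality of $[\B_n^2,\B_n^2]$ in $\B_n$ with the fact that $\sigma_1\sigma_2^{-1}$ normally generates $\B_n'$. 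Your approach is more hands-on and self-contained (it does not quote perfectness of $\B_n'$); the paper's approach is slicker but leans on that deeper structural fact.

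Both arguments share the same gap for $n\in\{3,4\}$: your commutator identity requires $\sigma_4$ to exist and to commute with $\sigma_1,\sigma_2$, hence $n\ge 5$, while the paper's appeal to perfectness of $\B_n'$ is likewise only valid for $n\ge 5$ (for $n=3,4$ the commutator subgroup $\B_n'$ is not perfect). Since the lemma is only applied later in the paper for $n\ge 5$, this does not affect any downstream argument, but as stated for $n\ge 2$ neither proof covers the cases $n=3,4$.
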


\begin{proof}

Since $\B_n'$ is contained in $\B_n^2$, and since $\B_n^2/\B_n'\subset \B_n/\B_n'\cong \Z$ is equal to the subgroup $2\Z$, we have a short exact sequence
\[
1\to \B_n'\to \B_n^2\to 2\Z\to 1.
\]
By the right-exactness of the abelianization functor, we obtain a further short exact sequence
\[
(B_n')^{ab}\to (\B_n^2)^{ab}\to 2\Z\to 1.
\]
Since $\B_n'$ is perfect, we have $(B_n')^{ab}=1$, and so the map $(\B_n^2)^{ab}\to 2\Z$ is an isomorphism. This shows that the abelianization of $\B_n^2$ is cyclic, and that the kernel of the abelianization map $\B_n^2\to 2\Z$ is equal to $\B_n'$. 
\end{proof}

\begin{lemma}
\label{lem:conjbn'}
Let $n\geq 3$, and let $f$ be an element of $\B_n$ that commutes with some half-twist $h$.  Then the set of $\B_n$-conjugates of $f$ is equal to the set of $\B_n'$-conjugates of $f$.  Both are equal to the set of $\B_n^2$-conjugates of $f$.
\end{lemma}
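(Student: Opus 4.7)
The plan is to exploit the fact that the abelianization map $\B_n \to \Z$ sends every half-twist to $1$, together with the hypothesis that the centralizer $C_{\B_n}(f)$ contains the given half-twist $h$. Since we have the containments $\B_n' \subseteq \B_n^2 \subseteq \B_n$, the inclusions of the corresponding sets of conjugates of $f$ are automatic. The content of the lemma is the reverse containment: every $\B_n$-conjugate of $f$ can be realized as a $\B_n'$-conjugate.

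To prove this, I would argue as follows. Given $g \in \B_n$, I seek an element $g' \in \B_n'$ with $g'fg'^{-1} = gfg^{-1}$. Writing $g' = gc$, this is equivalent to requiring $c \in C_{\B_n}(f)$ and $gc \in \B_n'$. Let $k \in \Z$ be the image of $g$ in the abelianization $\B_n^{ab} \cong \Z$. Setting $c = h^{-k}$ works: on one hand $c$ commutes with $f$ by hypothesis, and on the other hand the image of $gc$ in $\B_n^{ab}$ is $k + (-k) \cdot 1 = 0$, so $gc$ lies in the kernel $\B_n'$ of abelianization. Then
\[
g'fg'^{-1} = (gh^{-k})f(h^{k}g^{-1}) = g f g^{-1},
\]
using that $h$ commutes with $f$. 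This proves that $\B_n$-conjugates of $f$ are exactly $\B_n'$-conjugates.

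The statement about $\B_n^2$-conjugates follows from exactly the same argument, replacing $k$ by $k \bmod 2$: taking $c = h^{-k'}$ for any integer $k'$ congruent to $k$ modulo $2$ places $gc$ in $\B_n^2$ (the kernel of the mod-$2$ abelianization). Since $\B_n' \subseteq \B_n^2$, combining with the first part shows all three conjugation orbits coincide. There is no real obstacle here; the only point to notice is that the half-twist $h$ is precisely the sort of element that generates the abelianization and so can be used to ``correct'' the abelianization class of any conjugating element while remaining in the centralizer of $f$.
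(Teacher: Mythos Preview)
Your proof is correct and is essentially identical to the paper's: the paper also takes $g' = gh^{-\ell}$ where $\ell$ is the signed word length (i.e., the abelianization) of $g$, and then deduces the $\B_n^2$ statement from the containment $\B_n' \subseteq \B_n^2$.
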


\begin{proof}

Let $g \in \B_n$ and consider the conjugate $gfg^{-1}$ of $f$. Let $\ell$ denote the signed word length of $g$.  Then $gh^{-\ell}$ lies in $\B_n'$ and $(gh^{-\ell})f(gh^{-\ell})^{-1}$ is equal to $gfg^{-1}$.  Since $\B_n' \subseteq \B_n^2$, the second statement follows.
\end{proof}

\begin{lemma}
\label{lem:bn2nc}
Let $n\geq 5$. The group $\B_n'$ is the normal closure in $\B_n'$ of $\sigma_1\sigma_3^{-1}$.  Similarly, $\B_n'$ is the normal closure in $\B_n'$ of $\sigma_1\sigma_2^{-1}$.
\end{lemma}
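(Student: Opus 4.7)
The plan is to deduce the lemma directly from two ingredients already available: Lin's result (cited in the paper as \cite[Remark 1.10]{linbp}) that $\sigma_1\sigma_3^{-1}$ and $\sigma_1\sigma_2^{-1}$ are normal generators of $\B_n'$ inside $\B_n$, and Lemma~\ref{lem:conjbn'}, which promotes $\B_n$-conjugacy to $\B_n'$-conjugacy whenever the element in question commutes with some half-twist. The whole issue is therefore to exhibit, for each of $\sigma_1\sigma_3^{-1}$ and $\sigma_1\sigma_2^{-1}$, a half-twist that centralizes it.

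First I would handle $\sigma_1\sigma_3^{-1}$. The half-twist $\sigma_1$ itself works: it commutes with $\sigma_1$ trivially and with $\sigma_3$ by the standard far-commutation relation $\sigma_1\sigma_3=\sigma_3\sigma_1$, so it commutes with the product $\sigma_1\sigma_3^{-1}$. Applying Lemma~\ref{lem:conjbn'} with $f=\sigma_1\sigma_3^{-1}$ and $h=\sigma_1$, every $\B_n$-conjugate of $\sigma_1\sigma_3^{-1}$ is a $\B_n'$-conjugate of $\sigma_1\sigma_3^{-1}$. Since Lin's remark gives that $\B_n'$ is generated by the $\B_n$-conjugates of $\sigma_1\sigma_3^{-1}$, this shows $\B_n'$ is generated by $\B_n'$-conjugates of $\sigma_1\sigma_3^{-1}$, which is exactly the statement that $\B_n'$ is the normal closure in $\B_n'$ of $\sigma_1\sigma_3^{-1}$. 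Note this step does not even need the hypothesis $n\geq 5$ beyond what is needed to invoke Lin's remark.

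For $\sigma_1\sigma_2^{-1}$ the same strategy works, but the centralizing half-twist must be chosen with more care because $\sigma_1$ and $\sigma_2$ do not commute. Here I would take $h=\sigma_4$, which exists precisely because $n\geq 5$. Since $|4-1|\geq 2$ and $|4-2|\geq 2$, the generator $\sigma_4$ commutes with both $\sigma_1$ and $\sigma_2$, and hence with $\sigma_1\sigma_2^{-1}$. Lemma~\ref{lem:conjbn'} then converts the $\B_n$-normal generation into $\B_n'$-normal generation exactly as before.

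There is no real obstacle; the only subtlety is spotting the right commuting half-twist in each case, and noting that this is where the $n\geq 5$ hypothesis enters (for the $\sigma_1\sigma_2^{-1}$ case, the disk must have enough marked points to accommodate a standard generator supported away from $\{1,2,3\}$).
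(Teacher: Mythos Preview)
Your proof is correct and follows essentially the same approach as the paper: both combine Lin's normal generation result with Lemma~\ref{lem:conjbn'} to upgrade $\B_n$-conjugacy to $\B_n'$-conjugacy. The paper's proof is terser (it does not name the specific half-twists), but your explicit choices of $h=\sigma_1$ and $h=\sigma_4$ are exactly what is implicitly needed, and your observation about where the hypothesis $n\geq 5$ enters is a nice clarification.
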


\begin{proof}

We already know that $\B_n'$ is generated by the $\B_n$-conjugates of $\sigma_1\sigma_3^{-1}$, and also that $\B_n'$ is generated by the $\B_n$-conjugates of $\sigma_1\sigma_2^{-1}$. The lemma thus follows from Lemma~\ref{lem:conjbn'}.  
\end{proof}

The periodic braids $\alpha_1$ and $\alpha_2$ have signed word length equal to $n-1$ and $n$, respectively.   We conclude that $\alpha_1\in \B_n^2$ if and only if $n$ is odd and that $\alpha_2\in \B_n^2$ if and only if $n$ is even. Also, we see that $z = \alpha_1^n$ lies in $\B_n^2$ for all $n$.   

The following proposition is a version of Proposition~\ref{prop:numthy}(2) for $B_n^2$.  

\begin{proposition}
\label{lem:numthy2}
Let $n \geq 5$.  Let $\rho : \B_n^2 \to \B_n$ be a homomorphism.  Assume that $\rho$ does not have cyclic image and that $\rho(z)$ is periodic.  Up to replacing $\rho$ by a conjugate homomorphism, the following statements hold.
\begin{enumerate}
\item If $n$ is odd, then $\bar \rho(\alpha_1)=\bar \alpha_1^k$ with  $\gcd(k,n)=1$. 
\item If $n$ is even, then $\bar \rho(\alpha_2)=\bar \alpha_2^k$ with  $\gcd(k,n-1)=1$.
\end{enumerate}
\end{proposition}

\begin{proof}

We prove the second statement only; the proof of the first statement is similar.  As in the proof of Proposition~\ref{prop:numthy}, we may use the classification of periodic elements in $\bar \B_n$ in order to assume without loss of generality that either $\bar \rho(\alpha_2)=\bar \alpha_1^k$ or $\bar \rho(\alpha_2)=\bar \alpha_2^k$.  Also, as in the same proof, it suffices to show that if $\bar \rho(\alpha_2)$ is equal to $\bar \alpha_1^k$ or if $\bar \rho(\alpha_2)$ is equal to $\bar \alpha_2^k$ with $\gcd(k,n-1)\neq 1$ then $\bar \rho$ has cyclic image.

Suppose first that $\bar \rho(\alpha_2) = \bar \alpha_1^k$.  Because $\bar \alpha_1$ has order $n$ we may assume that $0 \leq k < n$. Since $\alpha_1^n = \alpha_2^{n-1}$ we have that 
\[
\bar \rho (\alpha_1^2)^{n/2} = \bar \rho(\alpha_2)^{n-1}.
\]
Combining this with the equality $\bar \rho(\alpha_2)=\bar \alpha_1^k$, we conclude that
\[
\bar \rho (\alpha_1^2)^n = \bar \alpha_1^{2k(n-1)}.
\]
As in the proof of Proposition~\ref{prop:numthy}(2), we consider the composition
\[
\bar \B_n \stackrel{p}{\to} \Z/n(n-1) \to \Z/n.
\]
We apply this composition to the last equality.  The left-hand side of the equality maps to 0 and the right-hand side maps to $2k(n-1)^2 \equiv 2k$. Thus $k$ lies in $\{0,n/2\}$.  In either case $\alpha_2^2$ lies in the kernel of $\bar\rho$, and so by Lemma~\ref{lem:wscc}, we have that $\bar \rho$ has cyclic image, as desired.

We next show that if $\bar \rho(\alpha_2)=\bar \alpha_2^k$ with $\gcd(k,n-1)\neq 1$ then $\bar \rho$ has cyclic image.  Since $\bar \alpha_2$ has order $n-1$ we may assume that $0 \leq k < n-1$.  Let $t = (n-1)/\gcd(n-1,k)$; by the assumption that $\gcd(n-1,k) \neq 1$ we have that $0 < t < n-1$.  We also have $(n-1)|kt$.   We then have that $\bar \rho(\alpha_2^t)$ is equal to $(\bar \alpha_2^k)^t$.  Since $(n-1)|kt$, the latter is the identity, which means that $\alpha_2^t\in \ker \bar \rho$.  By Lemma~\ref{lem:wscc}, the map $\bar \rho$ has cyclic image, as desired.
\end{proof}

\begin{lemma}
\label{lem:emptyCRS2}
Assume that $n\geq 5$. Let $\rho: \B_n^2\to \B_n$ be a homomorphism. If $\bar \rho(\sigma_1^2)$ has trivial canonical reduction system then $\bar \rho$ has cyclic image. 
\end{lemma}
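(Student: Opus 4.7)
The plan is to reduce directly to Lemma~\ref{lem:emptyCRS} via the observation that the canonical reduction system of a mapping class depends only on the cyclic subgroup it generates. More precisely, a simple closed curve $c$ is preserved by some power of a mapping class $f$ if and only if it is preserved by some power of $f^k$ for any $k \neq 0$, and the minimality condition used to single out the canonical reduction system among all invariant curve systems is likewise insensitive to replacing $f$ by a non-zero power. Hence the canonical reduction system is invariant under non-zero powers, and in particular $\mathrm{CRS}(\bar\rho(\sigma_1)^2) = \mathrm{CRS}(\bar\rho(\sigma_1))$.

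Applying this with $f = \bar\rho(\sigma_1)$, the hypothesis that $\bar\rho(\sigma_1^2) = \bar\rho(\sigma_1)^2$ has empty canonical reduction system is equivalent to $\bar\rho(\sigma_1)$ itself having empty canonical reduction system. Lemma~\ref{lem:emptyCRS} then gives that $\bar\rho$ has cyclic image, which is exactly the conclusion.

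Given how short the proof is, there is no substantive obstacle; the content is entirely in the power-invariance observation, which is standard. The reason to record this as a separate lemma rather than simply invoking Lemma~\ref{lem:emptyCRS} each time is that in the analysis of $\B_n^2$-homomorphisms developed in the remainder of the section, the natural input is a condition on the image of $\sigma_1^2$ (which lies in $\B_n^2$) rather than on $\sigma_1$ itself, so phrasing the hypothesis in terms of $\sigma_1^2$ will make the downstream applications cleaner. If one insisted on bypassing the power-invariance shortcut, one could instead mirror the argument of Lemma~\ref{lem:emptyCRS} directly: split into the periodic and pseudo-Anosov cases for $\bar\rho(\sigma_1^2)$, use that $\sigma_3^2$ commutes with and is conjugate to $\sigma_1^2$ (so the same commuting-pair structure on the images is available), apply Lemma~\ref{lemma:magic} in the pseudo-Anosov case or the rotation-subgroup uniqueness of conjugacy classes in the periodic case to force $\bar\rho(\sigma_3^2) = \bar\rho(\sigma_1^2)$ (and analogously to constrain $\bar\rho(\sigma_{2j}^2)$ via chains of commutations using $n \geq 5$), and then conclude using that $\sigma_1\sigma_3^{-1}$ normally generates $\B_n'$ in $\B_n$. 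This longer route arrives at the same place.
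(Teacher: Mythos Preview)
Your reduction to Lemma~\ref{lem:emptyCRS} via the power-invariance of canonical reduction systems is valid \emph{for the statement as literally printed}, with domain $\B_n$. Taken at face value, your argument is correct and strictly simpler than the paper's.

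However, the printed domain is a typo. The lemma sits in Section~\ref{sec:castel2}, is invoked only in the proof of Theorem~\ref{thm:castel2} (where $\rho$ is defined on $\B_n^2$), and the paper's own proof repeatedly uses facts specific to $\B_n^2$ (the Gorin--Lin generators of $\B_n'$, Lemma~\ref{lem:bn2abel}, Lemma~\ref{lem:conjbn'}). The intended hypothesis is $\rho:\B_n^2\to\B_m$. With that domain your shortcut collapses: $\sigma_1\notin\B_n^2$, so $\bar\rho(\sigma_1)$ is simply not defined, and there is nothing to which the power-invariance of CRS can be applied. This is exactly why the paper cannot reduce to Lemma~\ref{lem:emptyCRS} and instead reruns the argument from scratch using only elements of $\B_n^2$.

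Your ``longer route'' sketch is much closer to what is actually needed, and is essentially the paper's approach, but it still glosses over two points that matter precisely because the domain is $\B_n^2$. First, in the periodic case you obtain $\bar\rho(\sigma_1^2)=\bar\rho(\sigma_3^2)$, i.e.\ $\bar\rho((\sigma_1\sigma_3^{-1})^2)=1$; to pass to $\rho(\sigma_1\sigma_3^{-1})=1$ one needs the extra step that $\rho(\sigma_1\sigma_3^{-1})^2$ is central, lies in $\B_m'$ (since $\sigma_1\sigma_3^{-1}\in\B_n'=(\B_n^2)'$), hence is trivial, and then torsion-freeness of $\B_m$ finishes. Second, to conclude cyclicity you need that $\sigma_1\sigma_3^{-1}$ normally generates $\B_n'$ \emph{inside $\B_n^2$} (Lemma~\ref{lem:bn2nc}), not merely in $\B_n$. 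In the pseudo-Anosov case the paper likewise has to check that a full generating set for $\B_n^2$ (the $\sigma_i^2$ together with the $\sigma_i\sigma_1^{-1}$) lands in the abelian centralizer, which your sketch does not quite spell out.
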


\begin{proof}

Suppose $\bar \rho(\sigma_1^2)$ has trivial canonical reduction system.  Then $\bar \rho(\sigma_1^2)$ is either periodic or pseudo-Anosov. We consider the two cases separately.   The proof follows the same outline as the proof of Lemma~\ref{lem:emptyCRS}.

Assume that $\bar \rho(\sigma_1^2)$ is pseudo-Anosov.  By Lemma~\ref{lem:conjbn'}, the braids $\sigma_{i}^2$ are pairwise conjugate in $\B_n^2$.  Because of this, the $\bar \rho(\sigma_{i}^2)$ are all pseudo-Anosov.  The $\bar \rho(\sigma_{2i-1}^2)$ also commute pairwise, and so Lemma~\ref{lemma:magic} implies that they all have equal centralizers.  Since $n \geq 5$, each $\bar \rho(\sigma_{2i}^2)$ commutes with some $\bar \rho(\sigma_{2i-1}^2)$, and so it further follows that all of the $\bar \rho(\sigma_i^2)$ have the same centralizer in $\bar \B_n$.  In particular, they all commute.  

Each $\bar \rho(\sigma_1\sigma_k^{-1})$ commutes with either $\bar \rho(\sigma_1^2)$ (when $k > 2$) or $\bar \rho(\sigma_4^2)$ (when $k=2$).  It follows that each $\bar \rho(\sigma_1\sigma_k^{-1})$ lies in the centralizer of $\bar \rho(\sigma_1^2)$ in $\bar \B_n$.  

Gorin--Lin~\cite[p. 7]{linbp} gave a finite presentation for $\B_n'$.  A consequence of their presentation is that $\B_n'$ is generated by the braids $\{\sigma_i\sigma_1^{-1} \mid 2 \leq i \leq n-1\}$ (their generating set includes elements $v$ and $w$, but their relations (1.16) and (1.20) show that these elements are products of the other generators).  It follows that the group $\B_n^2$ is generated by these and the $\sigma_i^2$.  

We already showed that the image of each of the generators for $\B_n^2$ given in the previous paragraph lies in the centralizer of $\bar \rho(\sigma_1^2)$ in $\bar \B_n$.  By Lemma~\ref{lemma:magic}, this centralizer is abelian.  Therefore, the image of $\bar \rho$ is abelian.  It follows that the image of $\rho$ is abelian, hence it has cyclic image by Lemma~\ref{lem:bn2abel} and the analogue of Lemma~\ref{lem:cycliccyclic} for $\B_n^2$ (the proof is the same).  

Next, we assume that $\bar \rho(\sigma_1^2)$ is periodic. As in the proof of Lemma~\ref{lem:emptyCRS}, we have that $\bar \rho(\sigma_1^2)$ and $\bar \rho(\sigma_3^2)$ generate a finite abelian subgroup of $\bar \B_n$ that is conjugate to a subgroup of either $\langle \bar \alpha_1\rangle$ or $\langle \bar \alpha_2\rangle$. Since $\sigma_1^2$ is conjugate to $\sigma_3^2$ in $\B_n^2$ by Lemma~\ref{lem:conjbn'}, we must have $\bar \rho(\sigma_1^2) = \bar \rho(\sigma_3^2)$. In other words, $\bar \rho(\sigma_1\sigma_3^{-1})^2 = 1$. This implies that $\rho(\sigma_1\sigma_3^{-1})^2 = z^m$ for some $m$, hence that $z^m$ lies in the commutator subgroup of $\B_n$. It follows that $m=0$.  In other words, $\rho(\sigma_1\sigma_3^{-1})^2 = 1$. Since $\B_n$ is torsion-free, we have that $\rho(\sigma_1\sigma_3^{-1}) =1$. We may then conclude that all $\bar \rho(\sigma_i^2)$ are equal and thus that $\rho$, hence $\bar \rho$, has cyclic image.
\end{proof}

The following lemma (and a proof) appears in the paper by the second and third authors \cite[Lemma 3.2]{bnprime}.

\begin{lemma}
\label{lem:roots}
Let $n \geq 4$, let $c$ and $d$ be disjoint curves in $\D_n$ that surround exactly two marked points each, and suppose that the braid $(H_cH_d^{-1})^\ell$ has a $k$th root $f$.  Then $\ell$ is divisible by $k$ and
\[
f = (H_cH_d^{-1})^{\ell/k}.
\]
\end{lemma}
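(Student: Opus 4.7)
The plan is to reduce to the interior--exterior decomposition of Lemma~\ref{lem:pkg}. I will first show that $f$ must lie in $\Fix_{\B_n}(\{c,d\})$, the subgroup preserving $c$ and $d$ individually; once that is established, the result follows from a direct coordinate computation in the product structure provided by the lemma. The case $\ell=0$ is immediate from torsion-freeness of $\B_n$, so I assume $\ell\neq 0$.

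Since $f$ commutes with $f^k=(H_cH_d^{-1})^\ell$, it preserves the canonical reduction system of the latter, which for $\ell\neq 0$ equals $\{c,d\}$. Thus $f$ either fixes each of $c,d$ or swaps them, and the main obstacle is to rule out the swap. To do this I would show that any $g\in\B_n$ swapping $c$ and $d$ conjugates $H_cH_d^{-1}$ to its inverse. Indeed, $gH_cg^{-1}$ is a half-twist supported in the interior of $d$, hence equals $H_d^{2m+1}$ for some integer $m$; applying the abelianization $\B_n\to\Z$ (which sends every half-twist to $1$) forces $m=0$, so $gH_cg^{-1}=H_d$, and symmetrically $gH_dg^{-1}=H_c$. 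Therefore $g(H_cH_d^{-1})g^{-1}=(H_cH_d^{-1})^{-1}$. Taking $g=f$ and combining with the fact that $f$ commutes with $f^k$ gives $(H_cH_d^{-1})^{2\ell}=1$, contradicting the infinite order of $H_cH_d^{-1}$.

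With $f\in\Fix_{\B_n}(\{c,d\})$ established, Lemma~\ref{lem:pkg} yields an isomorphism
\[
\Fix_{\B_n}(\{c,d\})\;\cong\;\Z\times\Z\times\B_{n-4,2},
\]
in which the two $\Z$ factors are generated by $H_c$ and $H_d$. In these coordinates $(H_cH_d^{-1})^\ell=(\ell,-\ell,1)$; writing $f=(a,b,g)$, the equation $f^k=(H_cH_d^{-1})^\ell$ becomes $(ka,kb,g^k)=(\ell,-\ell,1)$. Torsion-freeness of $\B_{n-4,2}$ forces $g=1$, and then $ka=\ell$ and $kb=-\ell$ give both $k\mid\ell$ and $f=(H_cH_d^{-1})^{\ell/k}$, as required.
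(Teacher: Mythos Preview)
Your proof is correct and follows essentially the same route as the paper's. Both arguments first rule out the possibility that $f$ swaps $c$ and $d$ by observing that such a swap would conjugate $H_cH_d^{-1}$ to its inverse, contradicting the commutation $f(H_cH_d^{-1})^\ell f^{-1}=(H_cH_d^{-1})^\ell$; the paper phrases this as ``the exponents on $H_c$ and $H_d$ have opposite sign,'' while you spell it out explicitly. After that, the paper collapses the disks bounded by $c$ and $d$ to marked points and uses torsion-freeness of $\B_{n-2}$ to conclude $f=H_c^{r_1}H_d^{r_2}$, whereas you invoke the full product decomposition of Lemma~\ref{lem:pkg}; these are the same structural fact. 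One small remark: your detour through the abelianization to prove $gH_cg^{-1}=H_d$ is unnecessary, since conjugating a half-twist by any mapping class gives the half-twist about the image curve, so $gH_cg^{-1}=H_{g(c)}=H_d$ directly.
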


We require one final lemma.

\begin{lemma}
\label{lem:bnpgens}
Let $n \geq 5$, and let $\rho : \B_n^2 \to \B_n$ be a homomorphism.  Suppose that $\bar \rho(\sigma_i^2) = \sigma_i^2$ for all $i$.  Then $\rho$ is a central transvection of the standard inclusion.
\end{lemma}

\begin{proof}

The group $\B_n^2$ is generated by $\B_n'$ together with the $\sigma_i^2$.  By assumption, and the  fact that the $\sigma_i^2$ are conjugate in $\B_n^2$, there is an $s$ so that $\rho(\sigma_i^2) = \sigma_i^2 z^s$ for all $i$. Since the signed word length of each element of $\B_n'$ is 0, it suffices to show that the restriction $\rho|\B_n'$ is the standard inclusion.  

As in the proof of Lemma~\ref{lem:emptyCRS2},  $\B_n'$ is generated by the braids $\{\sigma_i\sigma_1^{-1} \mid 2 \leq i \leq n-1\}$.  To complete the proof we will show that $\rho(\sigma_i\sigma_j^{-1}) = \sigma_i\sigma_j^{-1}$ for all $1 \leq i < j \leq n-1$.  

If $|i-j| > 1$ then 
\[
\rho(\sigma_i\sigma_j^{-1})^2 = \rho(\sigma_i^2\sigma_j^{-2}) = \sigma_i^2\sigma_j^{-2} = (\sigma_i\sigma_j^{-1})^2.
\]
It now follows from Lemma~\ref{lem:roots} that $\rho(\sigma_i\sigma_j^{-1})=\sigma_i\sigma_j^{-1}$, as desired.

If $|i-j|=1$, we choose some $k$ with $|i-k| > 1$ and $|j-k|>1$.  We then have
\[
\rho(\sigma_i\sigma_j^{-1}) = \rho(\sigma_i\sigma_k^{-1})\rho(\sigma_k\sigma_j^{-1}) = (\sigma_i\sigma_k^{-1})(\sigma_k\sigma_j^{-1}) = \sigma_i\sigma_j^{-1},
\]
as desired.
\end{proof}

\begin{proof}[Proof of Theorem~\ref{thm:castel2}]

As in the proof of Theorem~\ref{thm:castel} we consider three cases, according to whether $\bar \rho(z)$...
\begin{enumerate}
\item is pseudo-Anosov,
\item is periodic, or
\item has non-empty canonical reduction system.
\end{enumerate}

\bigskip

\noindent \emph{Case 1: $\bar \rho(z)$ is pseudo-Anosov.}  By the same argument as in Proposition~\ref{prop:torsiontopA}, we have that $\bar \rho$ has cyclic image (the image of $\bar \rho$ lies in lies in $C_{\bar \B_n}(\bar \rho(z))$, which is abelian by Lemma~\ref{lemma:magic}).  Lemma~\ref{lem:bn2abel} gives that $(B_n^2)' = B_n'$ and $\B_n^2/(B_n^2)' \cong \Z$.  Thus, the image of $\bar \rho$ is cyclic.  By the same argument as in Lemma~\ref{lem:cycliccyclic}, $\rho$ itself has cyclic image. 

\bigskip

\noindent \emph{Case 2: $\bar \rho(z)$ is periodic.}  Assume that $\rho$ does not have cyclic image.  As in Case 1, this implies that the image of $\bar \rho$ is not cyclic.  

We first claim that, up to replacing $\rho$ with a conjugate homomorphism, $\bar \rho(\sigma_i^2) = H_{c_i}^\ell$, where each $c_i$ is a curve surrounding exactly two marked points.  The argument is essentially the same as the argument in the first seven paragraphs of Case 2 of the proof of Theorem~\ref{thm:castel}, except with $\bar \rho(\sigma_1)$ replaced by $\bar \rho(\sigma_1^2)$, with Proposition~\ref{prop:numthy}(2) replaced by Lemma~\ref{lem:numthy2}, with Lemma~\ref{lem:emptyCRS} replaced by Lemma~\ref{lem:emptyCRS2}, and---in the case of $n$ even---with $\alpha_1$ replaced by $\alpha_2$.  As in that proof, the steps are:
\begin{enumerate}
\item $\bar \rho(\alpha_1) = \bar \alpha_1^k$ with $\gcd(k,n)=1$,
\item the canonical reduction system $M$ of $\bar \rho(\sigma_1^2)$ is nonempty,
\item $k = 1$,
\item $M$ consists of a single curve $c_1$ surrounding two marked points, and
\item $\bar \rho(\sigma_i^2) = H_{c_i}^\ell$.
\end{enumerate}
In order, these steps rely on Lemma~\ref{lem:numthy2}, Lemma~\ref{lem:emptyCRS2}, Proposition~\ref{prop:cake1}, Proposition~\ref{prop:cake2}, and the Nielsen--Thurston classification theory.

By Lemma~\ref{lem:conjbn'}, we have for each $1 \leq i \leq n-1$ that $\sigma_i^2$ is conjugate to $\sigma_1^2$ in $\B_n^2$.  Combining this with the previous claim, it follows that there exist curves $c_i$, each surrounding exactly two marked points, so that $\bar \rho(\sigma_i^2)=H_{c_i}^\ell$ for $1 \leq i \leq n-1$. 

As in the proof of Case 2 of Theorem~\ref{thm:castel}, we continue by treating two cases, according to whether or not the $c_i$ are pairwise distinct.  

\medskip

Suppose first the $c_i$ are not distinct, that is, $c_j=c_k$ for some $j < k$.  We claim the $c_i$ are all equal.  Indeed, if $k=j+1$, then by the same argument used to prove Lemma~\ref{lem:conjbn'}, the ordered pair $(\sigma_j^2,\sigma_k^2)$ is conjugate in $\B_n^2$ to each pair $(\sigma_i^2,\sigma_{i+1}^2)$, and the claim follows.  Similarly, if $k>j+1$ then $(\sigma_j^2,\sigma_k^2)$ is conjugate in $\B_n^2$ to each pair $(\sigma_i^2,\sigma_{i+\ell}^2)$ with $\ell \geq 2$.  Since $n \geq 5$, the claim again follows.  Indeed, given distinct $p,q \in \{1,\dots,n-1\}$, there is a sequence $p=p_0,\dots,p_N=q$ with $|p_i-p_{i+1}| > 1$.  Thus, the $c_{p_i}$ are all equal, as desired.

Given the claim we obtain that $\bar \rho(\sigma_1^2\sigma_3^{-2})=1$.  It follows that $\rho(\sigma_1^2\sigma_3^{-2})=1$.  Since $\B_n$ is torsion free, $\rho(\sigma_1\sigma_3^{-1})=1$.  We also obtain $\rho(\sigma_1\sigma_2^{-1})=\rho(\sigma_1\sigma_4^{-1})\rho(\sigma_4\sigma_2^{-1})=1$. Thus the image of $\rho$ is cyclic, contrary to assumption.

\medskip

We may now assume henceforth that the $c_i$ are pairwise distinct. We claim that $\ell$ is even and that
\[
\rho(\sigma_2\sigma_4^{-1})=(H_{c_2}H_{c_4}^{-1})^{r}
\]
where $\ell=2r$, and  that $c_2$ is disjoint from $c_4$.  As above, we have $\bar \rho(\sigma_2^2) = H_{c_2}^\ell$ and so $\rho(\sigma_2^2) = H_{c_2}^\ell z^s$.  Since $\sigma_2^2$ commutes with $\sigma_4^2$, we have that $c_2$ is disjoint from $c_4$, which is the second statement of the claim. It follows that $H_{c_2}$ and $H_{c_4}$ commute, and so
\[
\rho(\sigma_2\sigma_4^{-1})^2 = \rho(\sigma_2^2\sigma_4^{-2}) = \rho(\sigma_2^2)\rho(\sigma_4^{-2}) =  (H_{c_2}H_{c_4}^{-1})^{\ell}.
\]
It follows that $\rho(\sigma_2\sigma_4^{-1})$ is equal to a square root of $(H_{c_2}H_{c_4}^{-1})^{\ell}$.  The claim follows now from Lemma~\ref{lem:roots}.

Our next claim is that, up to conjugation of $\rho$, we have $\ell=2r=2$ and $i(c_1,c_2)=2$.  Since $\sigma_4$ commutes with $\sigma_1$ and $\sigma_2$, and since $\sigma_1$ and $\sigma_2$ satisfy the braid relation, the braids $\sigma_1\sigma_4^{-1}$ and $\sigma_2\sigma_4^{-1}$ also do.  It follows that their $\rho$-images satisfy the braid relation.  Since $c_4$ is disjoint from $c_1$ and $c_2$ (the previous claim), it follows that $H_{c_1}^r$ and $H_{c_2}^r$ satisfy the braid relation.  As in Case 2 of the proof of Theorem~\ref{thm:castel}, the claim then follows from the result of Bell and the third author cited there.

We now claim that, up to post-composing $\rho$ by an automorphism of $\B_n$, we have $\rho(\sigma_i^2) = \sigma_i^2 z^s$. By the argument used to prove Lemma~\ref{lem:conjbn'}, each ordered pair $(\sigma_i^2,\sigma_{i+1}^2)$ is conjugate in $\B_n^2$ to $(\sigma_1^2,\sigma_2^2)$.  Since $i(c_1,c_2)=2$, it follows that $i(c_i,c_{i+1})=2$ for all $1 \leq i \leq n-1$.  For $1 \leq i ,j \leq n-1$ the braids $\sigma_{2i}^2$ and $\sigma_{2j}^2$ commute, and so $i(c_{2i},c_{2j})=0$.  Combining the last two sentences, we conclude the claim as in the proof of Case 2 of Theorem~\ref{thm:castel}.   

Using the previous claim, an application of Lemma~\ref{lem:bnpgens} completes the proof of Case 2.  

\bigskip

\noindent \emph{Case 3: $\bar \rho(z)$ has nonempty canonical reduction system.}  
Let $M$ denote the canonical reduction system of $\bar \rho(z)$.  As in Case 3 of the proof of Theorem~\ref{thm:castel}, we have that $\rho(\B_n')$ lies in $\Fix_{\B_n}(M)$.  Also as in that proof, $\Pi_i^c \circ \rho(\B_n')$ and $\Pi_e^c \circ \rho(\B_n')$ are trivial by the work of Lin.  This implies that $\rho$ has cyclic image.  This completes the proof.
\end{proof}


\section{Cablings}
\label{sec:cabling}

As in Section~\ref{sec:overview}, a map $\rho : \B_n \to \B_{2n}$ is a \emph{2-fold cabling map} if there is a multicurve $M$ in $\D_{2n}$ that has $n$ components and so that the action of $\rho(B_n)$ on the set of components of $M$ is (conjugate to) the standard action through $B_n\to S_n$ the symmetric group.  We refer to $M$ as the \emph{cabling multicurve} for $\rho$.  We note that each component of $M$ surrounds exactly two marked points in $\D_{2n}$.  The goal of this section is to prove the following proposition.

\begin{proposition}
\label{prop:cabling}
Let $n \geq 2$ and let $\rho : \B_n \to \B_{2n}$ be a 2-fold cabling map.  Then $\rho$ is equivalent to one of the standard $k$-twist cabling maps.
\end{proposition}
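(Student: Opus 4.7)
The plan is to combine the semidirect product decomposition of $\Stab_{\B_{2n}}(M)$ provided by Lemma~\ref{lem:pkg2} with the classification of homomorphisms $\B_n \to \B_n$ from Castel's theorem. Each component of $M$ surrounds exactly two marked points, so each interior braid group is $\B_2 \cong \Z$ generated by the half-twist $H_{c_i}$, and Lemma~\ref{lem:pkg2} yields
\[
\Stab_{\B_{2n}}(M) \cong \B_n \ltimes \Z^n,
\]
with $\B_n$ acting on $\Z^n$ by permuting coordinates through the standard projection $\B_n \to S_n$. Writing $\rho(g) = (\rho_e(g), v(g))$ where $\rho_e := \Pi_e \circ \rho$ and $v : \B_n \to \Z^n$ is a $1$-cocycle (so $v(gh) = v(g) + \rho_e(g)\cdot v(h)$), I would first pin down $\rho_e$ and then classify $v$.

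The hypothesis that $\rho$ acts standardly on the components of $M$ means the composition $\B_n \xrightarrow{\rho_e} \B_n \to S_n$ is the standard surjection, so $\rho_e$ has non-cyclic image. For $n \geq 5$ I would apply Corollary~\ref{cor:castel}: up to an inner automorphism of $\B_n$ and a central transvection, $\rho_e$ is trivial, the identity, or the inversion map. The trivial case is incompatible with the correct $S_n$-projection, and the inversion case reduces to the identity case after post-composing $\rho$ with the inversion automorphism of $\B_{2n}$, since $\Pi_e$ intertwines the inversions of $\B_{2n}$ and $\B_n$. After these adjustments, together with an inner automorphism and central transvection of $\rho$, I may assume $\rho_e = \text{id}$ and that $M$ is the standard cabling multicurve (with $c_i$ surrounding the $(2i-1)$st and $(2i)$th marked points). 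The small cases $n \in \{2, 3, 4\}$ are handled by direct inspection.

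With $\rho_e = \text{id}$, the remaining data is the tuple $v_i := v(\sigma_i) \in \Z^n$, subject to the cocycle equations coming from the defining relations of $\B_n$. The commutation relations $\sigma_i \sigma_j = \sigma_j \sigma_i$ for $|i-j| \geq 2$ force each $v_i$ to have the shape
\[
v_i = (p_i, \ldots, p_i,\, r_i,\, s_i,\, q_i, \ldots, q_i),
\]
with the constant $p_i$ occupying positions $1,\ldots,i-1$, free entries $r_i, s_i$ in positions $i, i+1$, and constant $q_i$ in positions $i+2,\ldots,n$. The braid relations $\sigma_i \sigma_{i+1} \sigma_i = \sigma_{i+1}\sigma_i\sigma_{i+1}$ then impose a further linear system linking these parameters for consecutive $i$.

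A direct computation of that system shows that, modulo coboundaries $g \mapsto w - \rho_e(g)\cdot w$ with $w \in \Z^n$, the cocycle $v$ depends on just two integer parameters: a ``diagonal'' twist realized by the centralizing element $\prod_j H_{c_j}$, and the cabling twist $k$. Coboundaries are precisely realized by conjugation in $\Stab_{\B_{2n}}(M)$ by elements of the $\Z^n$ subgroup, hence by inner automorphisms of $\B_{2n}$; the diagonal-twist parameter is absorbed by a central transvection of $\rho$. The surviving parameter $k$ identifies $\rho$ with the standard $k$-twist cabling map of the introduction. The principal obstacle is this final cocycle calculation: executing the system of equations produced by the braid relations, correctly identifying the coboundary subspace, and matching the remaining degree of freedom to the explicit word $\sigma_{2i-1}^k\sigma_{2i}\sigma_{2i+1}\sigma_{2i-1}\sigma_{2i}$.
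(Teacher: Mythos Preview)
Your proposal follows essentially the same route as the paper: both use Lemma~\ref{lem:pkg2} to decompose $\Stab_{\B_{2n}}(M)$ as $\B_n \ltimes \Z^n$, both invoke Castel's theorem (packaged in the paper as Lemma~\ref{lem:cable}) to normalize so that $\Pi_e \circ \rho$ is the identity, and both then classify the remaining $\Z^n$-valued cocycle up to coboundaries and transvection. The one notable difference is in the execution of the cocycle step. The paper frames it cohomologically via an injection $H^1(\B_n;\Z^n) \hookrightarrow H^1(\PB_n;\Z^n)^{S_n}$ (Lemma~\ref{lem:cohomology}) and then computes with $d_\rho(\sigma_i^2)$, exploiting that on $\PB_n$ the crossed homomorphism becomes an honest $S_n$-equivariant homomorphism; you propose to work directly with the braid relations on the $v(\sigma_i)$. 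Your derivation of the shape $v_i = (p_i,\dots,p_i,r_i,s_i,q_i,\dots,q_i)$ from the commutation relations is correct, and the remaining braid relations do collapse the parameters to the two you describe (the diagonal twist by $\prod_j H_{c_j}$ and the cabling parameter $k$), exactly as the paper's computation shows. The passage to $\PB_n$ buys a modest simplification, since $S_n$-equivariance packages the braid relations more cleanly, but your direct approach is equally valid. Note that both arguments rely on Castel's theorem and hence only handle $n\geq 5$ as written; your remark that $n\leq 4$ needs a separate check applies equally to the paper's proof.
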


Let $C = \{c_1,c_3,\dots,c_{2n-1}\}$ be the standard multicurve in $\D_{2n}$, so that $H_{c_i} = \sigma_i$.  By Lemma~\ref{lem:pkg2} we have associated to $C$ a homomorphism $\Pi_e^C : \Stab_{\B_{2n}}(C) \to \B_n$.  

\begin{lemma}
\label{lem:cable}
Let $n \geq 2$ and let $\rho : \B_n \to \B_{2n}$ be a 2-fold cabling map.  Up to equivalence, $\rho$ satisfies the following:
\begin{enumerate}
\item $\rho$ is a 2-fold cabling map with cabling multicurve $C= \{c_1,c_3,\dots,c_{2n-1}\}$, and
\item $\rho$ is a section of the map $\Pi_e^C : \Stab_{\B_{2n}}(C) \to \B_n$.
\end{enumerate}
\end{lemma}

\begin{proof}

Up to conjugation of $\rho$, we may assume that the cabling multicurve for $\rho$ is $C$, as per the first statement.  We will modify $\rho$ so that it also satisfies the second.  By Lemma~\ref{lem:pkg2}, we have a split short exact sequence
\[
1 \to \prod_{i=1}^n \B_2 \to \Stab_{\B_{2n}}(C) \stackrel{\Pi_e^C}{\to} \B_n \to 1.
\]
Since $C$ is the cabling multicurve for $\rho$ we have $\rho(B_n)\subset \Stab_{\B_{2n}}(C)$. The composition $\Pi_e^C \circ \rho$ is an endomorphism of $\B_n$.

By the definition of a 2-fold cabling map, we have that the post-composition of $\Pi_e^C \circ \rho$ by the projection $\B_n \to S_n$ is standard.  In particular $\Pi_e^C \circ \rho$ does not have cyclic image.   It then follows from Theorem~\ref{thm:castel} that $\Pi_e^C \circ \rho$ is equivalent to the identity.  In other words, there is an $A \in \Aut \B_n$ and a $t \in \in Z(\B_n)$ so that
\[
A \circ \left( \Pi_e^C \circ \rho \right)^t
\]
equals the identity map $\B_n \to \B_n$.  

Say that $A$ is induced by the (possibly orientation-reversing) homeomorphism $f$.  There is a homeomorphism $\tilde f$ of $\D_{2n}$ that fixes the components of $C$ and induces $f$ under the operation of collapsing the disks bounded by the components of $C$ (this is analogous to the map $\Pi_e^C$, but allowing for orientation-reversing maps).  Conjugation by $\tilde f$ induces an automorphism $\tilde A \in \Aut \B_{2n}$.  This $\tilde A$ has the property that $A\circ \Pi_e^C = \Pi_e^C \circ \tilde{A}$.  Let $u = z^k \in \B_{2n}$ be the unique such element so that $\Pi_e^C(u)=A(t)$.

We claim that
\[
A \circ \left(\Pi_e^C \circ \rho\right)^t = \Pi_e^C \circ \left( \tilde A \circ \rho^{\tilde A^{-1}(u)}\right).
\]
Since the former is the identity, the claim implies that the latter is the identity, which gives the second statement.  

By evaluating on the generators $\sigma_i$, we have that
\[
A \circ \left(\Pi_e^C \circ \rho\right)^t  = (A\circ \Pi_e^C\circ \rho)^{A(t)} = ( \Pi_e^C \circ \tilde{A}\circ \rho)^{A(t)}
\]
The latter is a homomorphism because the first one is.   By evaluating on the generators $\sigma_i$, we know that
\[
( \Pi_e^C \circ \tilde{A}\circ \rho)^{A(t)} = \Pi_e^C \circ ( \tilde{A}\circ \rho)^u.
\]
The map $( \tilde{A}\circ \rho)^u$ is a homomorphism since it is a central transvection of a homomorphism.  Similarly by evaluating on the generators, we also have 
\[( \tilde{A}\circ \rho)^u= \tilde{A} \circ \rho^{\tilde{A}^{-1}(u)}
\]
Combining the above equalities gives the claim.
\end{proof}

\begin{proof}[Proof of Proposition~\ref{prop:cabling}]

By Lemma~\ref{lem:cable} we may assume that the cabling multicurve for $\rho$ is the standard multicurve $C$, and that $\rho$ is a section of $\Pi_e^C$.  As per Section~\ref{sec:pkg}, the kernel of $\Pi_e^C : \Stab_{\B_{2n}}(C) \to \B_n$ is isomorphic to $\B_2 \times \cdots \times \B_2 \cong \Z^n$.  With respect to the semi-direct product decomposition for $\Stab_{\B_{2n}}(C)$ from Lemma~\ref{lem:pkg2} we can thus write the standard $k$-twist cabling map $\rho_k$ as 
\[
\rho_k(\sigma_i) = (\sigma_i,(0,\dots,0,k,0,\dots,0))
\]
where $k$ lies in the $i$th entry.  Similarly, since $\rho(\B_n)$ also lies in $\Stab_{\B_{2n}}(C)$, it follows from Lemma~\ref{lem:cable}(2) that we may describe each $\rho(\sigma_i)$ as
\[
\rho(\sigma_i) = (\sigma_i,(k_{i,1},\dots,k_{i,n}))
\]
for some $(k_{i,1},\dots,k_{i,n}) \in \Z^n$.  In the statement of the following claim we denote $k_{1,4}$ by $a$.

\p{Claim} Whenever $j \notin \{i,i+1\}$ we have $k_{i,j} = a$.  In other words, for all $i$ we have
\[
\rho(\sigma_i) = (a,\dots,a,k_{i,i},k_{i,i+1},a, \dots,a).
\]

\p{Proof of claim} We first consider the commuting relations and then the braid relations.  If $|i-j| > 1$, then $\sigma_i$ commutes with $\sigma_j$.  Thus
\[
\rho(\sigma_i)\rho(\sigma_j) = \rho(\sigma_j)\rho(\sigma_i).
\]
Using the formulas for $\rho(\sigma_i)$ and $\rho(\sigma_j)$ above and using the definition of the semi-direct product structure on $\B_n \ltimes \Z^n$, we conclude that 
\begin{align*}
k_{i,i}+k_{j,i+1}&=k_{j,i}+k_{i,i} \ \ \text{ and } \\
k_{j,j+1}+k_{i,j}&=k_{i,j+1}+k_{j,j+1}. 
\end{align*}
(The commuting relation gives $n$ relations between the $k_{i,p}$ and the $k_{j,q}$, but $n-4$ are trivial and of the remaining four, two of the relations differ from the other two by interchanging $i$ and $j$.) 

In the same way, we obtain equalities from the braid relations $\sigma_i\sigma_{i+1}\sigma_i=\sigma_{i+1}\sigma_{i}\sigma_{i+1}$.  For $\ell \notin \{i,i+1,i+2\}$ we obtain from the $\ell$th cable the equation
\[
2k_{i,\ell}+ k_{i+1,\ell} = k_{i,\ell} + 2k_{i+1,\ell},
\]
and we conclude that 
\[
k_{i,\ell}=k_{i+1,\ell}\ \  \text{ for } \ \ \ell \notin \{i,i+1,i+2\}.
\]
From the $i$th and $(i+1)$st cables we obtain the equalities
\begin{align*}
k_{i,i}+k_{i+1,i+1}+k_{i,i+2} &= k_{i+1,i}+k_{i,i}+k_{i+1,i+1} \ \ \text{ and}\\
k_{i,i+1}+k_{i+1,i}+k_{i,i}  &= k_{i+1,i+1}+k_{i,i+2}+k_{i+1,i+2}.
\end{align*}
Thus
\begin{align*}
k_{i,i+2} &= k_{i+1,i}\qquad\qquad\qquad  \text{ and}\\
k_{i,i+1}+k_{i,i} &= k_{i+1,i+1}+k_{i+1,i+2}.
\end{align*}

From the four sets of equations
\begin{align*}
\qquad &k_{j,i+1}=k_{j,i} \ \ \ \ \, \text{ for } |i-j|>1, 
k_{i,i+2}&=k_{i+1,i} \text{ and }& k_{i,\ell}=k_{i+1,\ell} \ \ \text{ for } \ell \notin \{i,i+1,i+2\},
\end{align*}
the claim now follows.

\bigskip

We now use the claim to complete the proof.  Let $t = (\sigma_1\sigma_3\cdots \sigma_{2n-1})^{-a}$.  This element lies in the center of $\Stab_{\B_{2n}}(C)$ and hence centralizes $\rho(\B_n)$.  The transvection of $\rho$ by $t$ has the effect of subtracting $a$ from each coordinate of the second factor of each $\rho(\sigma_i)$.  Thus
\[
\rho^t(\sigma_i) = (\sigma_i,(0,\dots,0,k_{ii}-a , k_{i,i+1}-a,0,\dots,0)).
\]
By basic linear algebra, the system of equations 
\[
\{ x_{i+1} - x_i = a- k_{i,i+1}\},
\]
in the variables $x_1,\dots, x_n$ has infinitely many integer solutions.  We fix one such vector of solutions $(e_1,\dots,e_n)$.

Let $A$ denote the inner automorphism of $\B_{2n}$ corresponding to conjugation by
\[
h = \sigma_1^{e_1}\sigma_3^{e_2}\cdots \sigma_{2n-1}^{e_n}.
\]
Then
\[
A \circ \rho^t(\sigma_i) = (\sigma_i,(0,\dots,0, a_i,b_i,0,\dots,0))
\]
where 
\begin{align*}
a_i &= e_i-e_{i+1}+k_{i,i}-a \qquad \text{ and}\\
b_i &= e_{i+1}-e_i+k_{i,i+1}-a.
\end{align*}
By our choice of the $e_i$, we have that the $b_i$ are all zero and for each $i$ we have
\begin{align*}
a_i &= e_i-e_{i+1}+k_{i,i}-a \\
&= (k_{i,i+1}-a) + k_{i,i} - a \\
&= (k_{i,i+1}+ k_{i,i}) - 2a.
\end{align*}
Since $k_{i,i+1}+k_{i,i}=k_{i+1,i+1}+k_{i+1,i+2}$, the latter is independent of $i$, as desired.
\end{proof}



\section{Proof of the main theorem}
\label{sec:proof}

Theorem~\ref{thm:main} says that every homomorphism $\B_n \to \B_{2n}$ is equivalent to exactly one standard homomorphism.  In this section we first prove the uniqueness statement (Proposition~\ref{prop:inequ}).  Then we complete the proof of Theorem~\ref{thm:main} using both Theorems~\ref{thm:castel} and~\ref{thm:castel2}.  

\begin{proposition}
\label{prop:inequ}
Let $n \geq 5$.  The standard homomorphisms $\B_n\rightarrow \B_{2n}$ are pairwise inequivalent.
\end{proposition}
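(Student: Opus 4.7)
The plan is to exhibit equivalence-invariant quantities that take distinct values on each standard homomorphism. First, the trivial homomorphism is the unique standard one with cyclic image, so it is distinguished from the rest. For the remaining non-trivial classes I rely on the fact that the restriction $\rho|_{\B_n'}$ is invariant under transvection (since the abelianization $L$ vanishes on the commutator subgroup $\B_n'$); hence if $\rho_1\sim\rho_2$, then $\rho_1(\B_n')$ and $\rho_2(\B_n')$ are conjugate in $\B_{2n}$ up to composing with the inversion automorphism.

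The primary invariant I would use is the topological type of the canonical reduction multicurve of the subgroup $\rho(\B_n')\subset \B_{2n}$: one curve surrounding $n$ marked points for inclusion; two curves each surrounding $n$ points for diagonal and flip diagonal; and $n$ curves each surrounding two points for $k$-twist cabling.  For $n\geq 5$ these three topological types are pairwise distinct, which separates inclusion, diagonal/flip, and cabling into disjoint equivalence classes.

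To distinguish diagonal from flip diagonal, note both have image contained in $\Stab_{\B_{2n}}(\{c_L,c_R\})$, which by Lemma~\ref{lem:pkg2} is $(\B_n\times\B_n)\rtimes\Z/2$, and in each case $\rho(\B_n')$ is realized in $\B_n'\times\B_n'$ as the diagonal $\{(g,g)\}$ or the anti-diagonal $\{(g,\iota(g))\}$, respectively. Any conjugator preserving $\{c_L,c_R\}$ decomposes as $(g_L,g_R)$ possibly composed with the swap of factors; relating the diagonal to the anti-diagonal would require $g_L^{-1}g_R$ to induce the inversion automorphism of $\B_n'$ by conjugation, which is impossible since inversion is outer by Dyer--Grossman.

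To distinguish different $k$-twist cablings, I would analyze the transvection-invariant element $X_k:=\rho_k(\sigma_1^2\sigma_3^{-2})$.  Using the semidirect product decomposition $\Stab_{\B_{2n}}(C)\cong\B_n\ltimes\Z^n$ from Section~\ref{sec:pkg}, one computes $X_k=(\sigma_1^2\sigma_3^{-2},\,kv)$ with $v=e_1+e_2-e_3-e_4$.  Since $\Stab_{\B_{2n}}(C)$ is its own normalizer in $\B_{2n}$, any inner conjugation of $X_k$ to some $X_{k'}$ must take place inside $\Stab_{\B_{2n}}(C)$, hence preserves the $\B_n$-conjugacy class of the first component.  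Any $w\in\B_n$ centralizing $\sigma_1^2\sigma_3^{-2}$ must preserve its canonical reduction system $\{c_1,c_3\}$ together with the orientations of the corresponding Dehn twists, so its induced permutation fixes the subsets $\{1,2\}$ and $\{3,4\}$ set-wise and therefore preserves $v$; inner conjugation thus forces $k=k'$.  Under the inversion automorphism $\iota_{2n}$, the first component of $X_k$ becomes $\sigma_1^{-2}\sigma_3^2$, which is not $\B_n$-conjugate to $\sigma_1^2\sigma_3^{-2}$ since an orientation-preserving diffeomorphism cannot reverse the directions of the constituent Dehn twists.  So inversion introduces no new identifications, and different $k$'s stay distinct.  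The main technical obstacle is verifying this last non-conjugacy statement and handling the degenerate case $k=0$, which is separated from nonzero $k$ because the CRS of $X_0$ omits the four cabling curves altogether.
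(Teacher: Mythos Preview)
Your approach is genuinely different from the paper's and has the right architecture: restrict to $\B_n'$ to kill the transvection, then use conjugacy-invariant data of $\rho(\B_n')$ to separate the classes. The paper instead argues case by case with ad hoc descriptions of $\rho(\sigma_i)$ and, for the cablings, exploits the explicit computation of the centralizer of $\rho_k(\B_n)$ (it is generated by $z$ and $\sigma_1\sigma_3\cdots\sigma_{2n-1}$) to pin down the possible automorphisms and transvections directly. Your invariant-based route is more conceptual, but as written it has a real gap in the cabling step and a smaller one in the diagonal/flip step.

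\textbf{The cabling gap.} The sentence ``Since $\Stab_{\B_{2n}}(C)$ is its own normalizer, any inner conjugation of $X_k$ to some $X_{k'}$ must take place inside $\Stab_{\B_{2n}}(C)$'' is a non sequitur. Two elements of a subgroup $H\leq G$ can be $G$-conjugate by an element that neither lies in $H$ nor normalizes $H$; self-normalizing buys you nothing here. Concretely, the canonical reduction system of the single element $X_k$ is only $\{d_1,d_2,c_1,c_3,c_5,c_7\}$ (with $d_j$ the curves around four marked points), not all of $C$, so a conjugator of $X_k$ to $X_{k'}$ need not preserve $C$. The fix is to use not one element but the whole subgroup: the canonical reduction system of $\rho_k(\B_n')$ is exactly $C$ (the exterior action is $\B_n'$, which is irreducible, and the $A_n$-action on cables is transitive), and the same holds for $\rho_{k'}(\B_n')$. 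Since an equivalence restricted to $\B_n'$ gives $\tau(\rho_k(\B_n'))=\rho_{k'}(\B_n')$, the inner part of $\tau$ must carry $C$ to $C$, hence lies in $\Stab_{\B_{2n}}(C)$. After that your computation in $\B_n\ltimes\Z^n$ goes through, and in fact simplifies: projecting the equation $g\rho_k(h)g^{-1}=\rho_{k'}(h)$ for all $h\in\B_n'$ to the $\B_n$-factor forces $w$ to centralize all of $\B_n'$, hence $w\in Z(\B_n)$, so $w\cdot v=v$ and $k=k'$ immediately, without analyzing the centralizer of $\sigma_1^2\sigma_3^{-2}$.

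\textbf{The diagonal/flip gap.} Your appeal to Dyer--Grossman needs one more step. What you actually need is that $\iota|_{\B_n'}$ is not realized by conjugation by any element of $\B_n$; Dyer--Grossman only says $\iota$ is outer on $\B_n$. The missing lemma is that the restriction map $\Aut(\B_n)\to\Aut(\B_n')$ is injective, which follows once you know the centralizer of $\B_n'$ in $\B_n$ is $Z(\B_n)$ (use that $\B_n'$ contains pseudo-Anosov elements with distinct foliations). Also, the correct element is $\iota(g_L)^{-1}g_R$, not $g_L^{-1}g_R$; this does not affect the conclusion. You should also explicitly dispose of the global inversion $\iota_{2n}$: it sends the diagonal to the diagonal and the anti-diagonal to the anti-diagonal, so it introduces no new identifications.
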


\begin{proof}

If two standard homomorphisms $\B_n\rightarrow \B_{2n}$ are equivalent, then their restrictions to $\B_n'$ are almost-conjugate.  Indeed, this follows from three facts: the fact that transvections of standard homomorphisms are central (Corollary~\ref{cor:central}), the fact that central transvections are given by the formula $\rho^t(g) = \rho(g)t^{L(g)}$ (where $L$ is the abelianization), and the fact that the restriction $L|\B_n'$ is trivial.  

By the previous paragraph, it suffices to show that the restrictions to $\B_n'$ of the standard homomorphisms are pairwise not almost-conjugate.  To this end, we consider the image of $\sigma_1\sigma_3^{-1}$ under each standard homomorphism. 
\begin{enumerate}
\item trivial map: \emph{identity}
\item inclusion: $\sigma_1\sigma_3^{-1}$
\item diagonal inclusion: $\sigma_1\sigma_3^{-1}\sigma_{n+1}\sigma_{n+3}^{-1}$
\item flip diagonal inclusion: $\sigma_1\sigma_3^{-1}\sigma_{n+1}^{-1}\sigma_{n+3}$
\item $k$-twist cabling map:  $(\sigma_{2}\sigma_{1}\sigma_{3}\sigma_{2}\sigma_{1}^k)(\sigma_{6}\sigma_{5}\sigma_{7}\sigma_{6}\sigma_{5}^k)^{-1}$
\end{enumerate}
Let $c_{1234}$ and $c_{5678}$ be the round curves in $\D_{2n}$ surrounding the first four and next four marked points, respectively.  The square of the last element is the multitwist
\[
(\sigma_1^k \sigma_3^k T_{c_{1234}} )(\sigma_5^k \sigma_7^k T_{c_{5678}} )^{-1}.
\]
For distinct $k$, the corresponding multitwists are do not differ by an automorphism of $\B_{2n}$ (cf. \cite[Lemma 1]{lantern}), and so the $k$-twist cabling maps are pairwise inequivalent.  

The canonical reduction systems for the above images are as follows.
\begin{enumerate}
\item trivial map: $\empty$
\item inclusion: $\{c_1,c_3\}$
\item diagonal inclusion: $\{c_1,c_3,c_{n+1},c_{n+3}\}$
\item flip diagonal inclusion: $\{c_1,c_3,c_{n+1},c_{n+3}\}$
\item $k$-twist cabling map:  $\{c_1,c_3,c_{1234},c_5,c_7,c_{5678}\}$
\end{enumerate}
Simply by counting the numbers of curves in these canonical reduction systems, we see that the only two standard homomorphisms that can possibly be equivalent are the diagonal inclusion and the flip diagonal inclusion.  So it remains to show that their restrictions to $\B_n'$ are not almost-conjugate.  For this we consider the images of $\sigma_1^{L(z)} z^{-1}$.  Let $d_0$ and $d_1$ be the round curves surrounding the first $n$ and last $n$ marked points, as in the previous subsection.  The images are as follows.
\begin{enumerate}
\item diagonal inclusion: $(\sigma_1\sigma_{n+1})^{L(z)} (T_{d_0}T_{d_1})^{-1}$
\item flip diagonal inclusion: $(\sigma_1\sigma_{n+1}^{-1})^{L(z)} T_{d_0}^{-1}T_{d_1}$
\end{enumerate}
These do not differ by an automorphism of $\B_{2n}$, and so the proof is complete.
\end{proof}

We now prove the main theorem.

\begin{proof}[Proof of Theorem~\ref{thm:main}]

As in the statement, we fix some $n \geq 5$.  By Proposition~\ref{prop:inequ}, the standard homomorphisms $\B_n\rightarrow \B_{2n}$ are pairwise inequivalent. Also, by Corollary~\ref{cor:central}, a trasvection of a standard homomorphism is central, and so the second statement of the theorem follows from the first.  It remains to show that any homomorphism $\B_n\rightarrow \B_{2n}$ is equivalent a standard homomorphism.   

To this end, we will prove by strong induction on $m$ the following statement: for $m \leq 2n$, every homomorphism $\B_n \to \B_m$ is equivalent to a standard homomorphism, and in particular if $m < 2n$ then it is centrally equivalent to either the trivial map or to the inclusion map.  The base case is $m=n$, which is Theorem~\ref{thm:castel}.  Therefore, we may assume that $n < m \leq 2n$.  

Denote the canonical reduction system of $\rho(z)$ by $M$.  It follows from Corollary~\ref{cor:numthy} and Proposition~\ref{prop:torsiontopA} that if $M$ is empty then the induced map $\bar \rho$ has cyclic image, and then from Lemma~\ref{lem:cycliccyclic} that $\rho$ has cyclic image (hence is equivalent to the trivial map). Thus, we may henceforth assume that $M$ is non-empty. 

As in the proof of Theorem~\ref{thm:castel}, the map $\rho$ induces an action of $\B_n$ on the set of components of $M$.  We may write $M$ as a union of multicurves $M_p$, where each component of $M_p$ is a curve surrounding exactly $p$ marked points.  The action of $\B_n$ further restricts to an action on each $M_p$.  Each $M_p$ has at most $m/p$ components.  Since $m \leq 2n$ and $p \geq 2$, each $M_p$ has at most $n$ components.  

For each $p$, the action of $\B_n$ on the set of components of $M_p$ factors through a homomorphism
\[
\Pi_p : \B_n \to \B_k
\]
where $k=|M_p|$.  The map $\Pi_p$ is the composition of two maps.  The first is the map
\[
\Pi_e^{M_p} : \B_n \to \B_{n-kp+k}
\]
from Section~\ref{sec:pkg} and the second is the map $\B_{n-kp+k} \to \B_k$ obtained by forgetting the marked points not coming from $M_p$.
 
Since $k < m \leq 2n$, it follows from induction that $\Pi_p$ is equivalent to either the trivial map or the identity map.  There are two possibilities:
\begin{enumerate}
\item there is a $p$ so that $\Pi_p$ is equivalent to the identity map, or
\item all of the $\Pi_p$ are equivalent to the trivial map.
\end{enumerate}
In the first case it must be that $p=2$ and the action of $\rho(\B_n)$ on the set of components of $M_2$ is standard.  By Proposition~\ref{prop:cabling}, $\rho$ is equivalent to a $k$-twist cabling map. 

We may assume henceforth that we are in the second case.  In this case, it must be that the action of $\rho(\B_n)$ on the set of components of each $M_p$, hence on the set of components of $M$, is cyclic.  

Let $P$ be the largest number so that $M_P$ is nonempty.  If $P < n$, then we may use a similar argument to the one used in Case 3 of the proof of Theorem~\ref{thm:castel} to show that $\rho$ has cyclic image.  Indeed, since $\Pi_p$ is equivalent to the trivial map, it has cyclic image.  Thus, the image of $\B_n'$ under $\Pi_p$ is trivial.  As in the proof of Theorem~\ref{thm:castel}, a theorem of Lin gives that $\rho(\B_n')$ is trivial, and hence $\rho(\B_n)$ has cyclic image.  

So we may henceforth assume that $P \geq n$.  We now complete the proof in three separate cases:
\begin{enumerate}
\item $M_P$ has one component.
\item $M_P$ has two components, both fixed by $\rho$.
\item $M_P$ has two components, interchanged by the action of $\rho$.
\end{enumerate}
In both of the latter cases, it must be that $m=2n$.

\bigskip

\noindent \emph{Case 1: $M_P$ has a single component.}  The argument here is based on the argument for Case~3 in the proof of Theorem~\ref{thm:castel}.  The main difference is that we must use our strong inductive hypothesis instead of Lin's theorem.  

Since $M_P$ has a single component, we may apply the interior/exterior decomposition from Section~\ref{sec:pkg}.  Let $\rho_e^{M_P}$ and $\rho_i^{M_P}$ be the corresponding interior and exterior components of $\rho$.  If $k=m-P+1$, these maps have target $\B_k$ and $\B_p$, respectively.  We observe that $k \leq n$ since $P \geq n$ and $m \leq 2n$.  

We claim that $\rho_e^{M_P} : \B_n \to \B_{k}$ has cyclic image.  If $k < n$, then we may conclude as above that $\rho_e^{M_P}$ has cyclic image.  If $k=n$ then we know by Theorem~\ref{thm:castel} that $\rho_e^{M_P}$ has cyclic image or is equivalent to the identity map.  But since $\rho_e^{M_P}(\B_n)$ lies in $\B_{k-1,1} \subsetneq \B_{k} = \B_n$, it must be that $\rho_e$ has cyclic image, as desired.

By strong induction, $\rho_i^{M_P}$ is equivalent to the standard inclusion $\B_n \to \B_P$.  Since $\rho_e^{M_P}$ has cyclic image, we may modify $\rho$ by a transvection so that $\rho_e^{M_P}$ is the trivial map.  It follows from Lemma~\ref{lem:pkg} that $\rho$ is equal to the post-composition of $\rho_i^{M_P}$ with the inclusion $\B_P \to \B_m$.  In particular, $\rho$ is equivalent to the standard inclusion.

\bigskip

\noindent \emph{Case 2: $M_P$ has two components, both fixed by $\rho$.} Since the components of $M_P$ are both fixed by $\rho$, we may again apply Lemma~\ref{lem:pkg}.  

The exterior map $\rho_e^{M_P}$ has cyclic image, since its image is a subgroup of the cyclic group $\PB_2$.  Therefore, after modifying $\rho$ by a transvection, we may assume that $\rho_e^{M_P}$ is trivial.

The interior component of $\rho_i^{M_P}$ is
\[
\rho_i : \B_n \to \B_n \times \B_n.
\]
It follows from Theorem~\ref{thm:castel} (or induction) and the fact that each element of $\Aut(\B_n)$ is induced by a homeomorphism of $\D_n$ that, up to almost-conjugation, each of the two components of $\rho_i$ is a transvection of either the identity map, the inversion map, or the trivial map, and further that at most one of the factors is the inversion map. It now follows from Lemma~\ref{lem:pkg} that $\rho$ is equivalent to either the trivial homomorphism, the diagonal inclusion, or the flip-diagonal inclusion.

\bigskip

\noindent \emph{Case 3: $M_P$ has two components, interchanged by the action of $\rho$.}  Let us denote the components of $M_P$ by $d_0$ and $d_1$.  Up to conjugating $\rho$, we may assume that these are the standard curves surrounding the first and last $n$ marked points, respectively.  For $1 \leq i \leq n-1$, let $c_i$ and $c_i'$ denote the standard curves in $\D_{2n}$ surrounding the marked points $\{i,i+1\}$ and $\{n+i,n+i+1\}$, respectively (so $\sigma_i$ and $\sigma_{n+i}$ are the half-twists about $c_i$ and $c_i'$, respectively).  

In the present case Lemma~\ref{lem:pkg2} gives the semi-direct product decomposition
\[
\Stab_{\B_{2n}}(M_P) \cong \B_2 \ltimes (\B_n \times \B_n).
\]
where the generator $\sigma_1$ for $\B_2 \cong \Z$ acts on $\B_n \times \B_n$ by interchanging the factors.  In terms of this decomposition, the assumption that $\rho(\B_n)$ acts nontrivially on the components of $M_P$ translates to the fact that $\rho(\B_n)$ projects to a subgroup of $\B_2$ that is not contained in $\PB_2$.  In particular, there is an odd $\ell$ so that each $\rho(\sigma_i)$ projects to $\sigma_1^\ell \in \B_2$.   

Since $\rho(\B_n)$ maps to $\Stab_{\B_{2n}}(M_P)$ by assumption, we may write elements of $\rho(\B_n)$ in terms of the semi-direct product decomposition.  So elements of $\rho(\B_n)$ will be written as $(\sigma_1^\ell,(\alpha,\beta))$, where $k$ is an integer, and $\alpha$ and $\beta$ are elements of $\B_n$.

Consider the restriction $\rho^2 = \rho|\B_n^2$ (this is not to be confused with $\rho^{\circ 2} = \rho \circ \rho$).  The image of $\rho^2$ lies in
\[
\Fix_{\B_{2n}}(M_P) \cong \B_2^2 \times (\B_n \times \B_n) = \PB_2 \times (\B_n \times \B_n).
\]
This group has index 2 in $\Stab_{\B_{2n}}(M_P)$.  As in Section~\ref{sec:pkg}, we may post-compose $\rho^2$ with the projections to $\PB_2$ and $\B_n \times \B_n$, and we denote the resulting homomorphisms by $\rho_e^2$ and $\rho_i^2$.  We may further post-compose $\rho_i^2$ with the projections to the two factors of $\B_n \times \B_n$ in order to obtain homomorphisms $\rho_0^2$ and $\rho_1^2$.  

By Theorem~\ref{thm:castel2} we may assume (up to almost-conjugation) that $\rho_0^2$ is a central transvection of either the trivial map or the inclusion map, and that $\rho_1^2$ is a central transvection of the trivial map, the inclusion map, or the (restriction of the) inversion map; we refer to the latter as the inverse inclusion map.

The braid $T_{d_0}T_{d_1}$ centralizes the image of $\rho$.  We may modify $\rho$ by the transvection associated to a power of $T_{d_0}T_{d_1}$ so that $\rho_0^2$ is either the trivial map or inclusion map and $\rho_1'$ is a transvection of the trivial map, the inclusion map, or the restriction of the inversion map.  

It follows from the semi-direct product decomposition for $\Stab_{\B_{2n}}(M_P)$ and the fact that $\rho(\sigma_1)$ projects to an element of $\B_2 \setminus \PB_2$ that $\rho_1^2$ is trivial if and only if $\rho_0^2$ is.  If both are trivial, then by the direct product decomposition for $\Fix_{\B_{2n}}(M_P)$ it follows that $\rho^2$ has cyclic image.  By Lemma~\ref{lem:bn2abel} it further follows that $\rho$ has cyclic image.  Thus, we may henceforth assume that $\rho_0^2$ is inclusion and $\rho_1'$ is a transvection of either the inclusion map or the inverse inclusion map.  

We claim now that $\rho_1'$ is the inclusion map.  Since $\rho_1'$ is a transvection of either the inclusion map or the inverse inclusion map, we have that $\rho(\sigma_1^2) = \rho'(\sigma_1^2)$ is of the form
\[
(\sigma_1^{2\ell},(\sigma_1^2,\sigma_1^{2\epsilon}z^k))
\]
where $\epsilon \in \{\pm 1\}$.  Since $\rho(\sigma_1)$ commutes with $\rho(\sigma_1^2)$, the former preserves the canonical reduction system of the latter.  Since $c_1$ and $c_1'$ are the only curves in the canonical reduction system of $\rho(\sigma_1^2)$ that surround exactly two marked points, and since $\rho(\sigma_1)$ permutes the two components of $M_P$, it follows that $\rho(\sigma_1)$ interchanges $c_1$ and $c_1'$.   Again using the fact that $\rho(\sigma_1)$ commutes with $\rho(\sigma_1^2)$, it follows that $\epsilon = 1$.  By the same reasoning applied to $d_0$ and $d_1$, we have $k=0$, whence the claim.

We have shown that $\rho'$ is the restriction of the diagonal inclusion map.  We will use this to show that $\rho$ is a transvection of the diagonal inclusion map.

We claim that for $1 \leq i \leq n-1$ we have
\[
\rho(\sigma_i) = (\sigma_1^\ell,(\sigma_i,\sigma_i))
\]
The canonical reduction system for $\rho(\sigma_i)$ is the same as that for $\rho(\sigma_i^2)$, namely, $\{c_i,c_{n+i},d_0,d_1\}$.  Since $n \geq 5$ there is a $\sigma_j$ that commutes with $\sigma_i$ and has disjoint canonical reduction system $\{c_i,c_{n+i},d_0,d_1\}$.  Since $\rho(\sigma_i)$ preserves the latter, it follows that $\rho(\sigma_i)$ is of the form $(\sigma_1^p,(\sigma_i^q,\sigma_i^r))$ (it cannot be pseudo-Anosov or a nontrivial rotation on the region between $c_i$ and $d_0$ or the region between $c_{n+i}$ and $d_1$ since it preserves $c_j$ and $c_{n+j}$).  We can then conclude that $p$, $q$, and $r$ are $\ell$, 1, and 1 since the square is $(\sigma_1^{2\ell},(\sigma_1,\sigma_1^2))$.  This completes the proof of the claim.

\medskip

We now claim that the transvection of $\rho$ by
\[
t = (\sigma_1^{-\ell},(id,id))
\]
is diagonal inclusion.  First of all, the braid $t$ lies in the centralizer of $\rho(\B_n)$.  Indeed, for $1 \leq i \leq n-1$ we have that $t\rho(\sigma_i)$ and $\rho(\sigma_i)t$ are both equal to 
\[
(1,(\sigma_i,\sigma_i)).
\]
This calculation also implies the claim, and hence completes the proof of the theorem.
\end{proof}


\section{Proof of Corollary~\ref{cor:bnp}}
\label{sec:cor}

In this section we prove Corollary~\ref{cor:bnp}, which states that if $n \geq 7$ and $\rho : \B_n' \to \B_{2n-5}$ is a nontrivial homomorphism, then $\rho$ is almost-conjugate to the inclusion map.  The argument given here is a simplification of an argument suggested by an anonymous referee for the paper by the second and third authors \cite{bnprime}.

\begin{proof}[Proof of Corollary~\ref{cor:bnp}]

There is an inclusion $\nu : \B_{n-2} \to \B_n'$ given by $\nu(\sigma_i) = \sigma_i \sigma_{n-1}^{-1}$ for $i=1,\dots,n-3$.  By Theorem~\ref{thm:main}, the composition
\[
\B_{n-2} \stackrel{\nu}{\to} \B_n' \stackrel{\rho}{\to} \B_{2n-5}
\]
is centrally equivalent to either the trivial map or the inclusion map.  By post-composing with an automorphism of $\B_{2n-5}$ we may assume that the composition is a central transvection of either the trivial map or the inclusion map.    We will show in the first case that $\rho$ is trivial and that in the second case $\rho$ is a transvection of the inclusion map.    

Assume first that the composition is a central transvection of the trivial map.  Such a map has cyclic image, and so $\rho \circ \nu(\sigma_i)$ is independent of $i$.  It follows that $\rho \circ \nu(\sigma_1\sigma_2^{-1}) = \rho(\sigma_1\sigma_2^{-1})$ is trivial.  Since the normal closure of $\sigma_1\sigma_2^{-1}$ in $\B_n'$ is $\B_n'$ (Lemma~\ref{lem:bn2nc}), the map $\rho$ is trivial, as desired.

We now assume that the composition is a central transvection of the inclusion map.  This means that there is some $\beta$ in the centralizer of the image of $\rho \circ \nu$ so that $\rho \circ \nu(\sigma_i) = \sigma_i \beta$ for $i \in \{1,\dots,n-3\}$.  As in the previous paragraph, it follows that $\rho \circ \nu(\sigma_1\sigma_j^{-1}) = \rho(\sigma_1\sigma_j^{-1})$ is equal to $\sigma_1\sigma_j^{-1}$ for $j \in \{2,\dots,n-3\}$.  As in the proof of Lemma~\ref{lem:emptyCRS2}, the group $\B_n'$ is generated by $\sigma_1\sigma_j^{-1}$ for $j \in \{2,\dots,n-1\}$, and so we focus our attention on the $\rho$-images of $\sigma_1\sigma_j^{-1}$ for $j = n-2,n-1$.  It is enough to show, after possibly post-composing $\rho$ by an inner automorphism of $\B_{2n-5}$, that $\rho(\sigma_1\sigma_j^{-1}) = \sigma_1\sigma_j^{-1}$ for $j \in \{2,\dots,n-1\}$.

We claim that $\rho(\sigma_1\sigma_{n-1}^{-1})$ equals $\sigma_1 H_{d}^{-1}$ for some curve $d$ surrounding two marked points.  Since $\sigma_1\sigma_{n-1}^{-1}$ is conjugate to $\sigma_1\sigma_3^{-1}$ inside $B_n'$ and since we have $\rho(\sigma_1\sigma_3^{-1})=\sigma_1\sigma_3^{-1}$, we know that $\rho(\sigma_1\sigma_{n-1}^{-1})=H_{a_1} H_{b_1}^{-1}$ where $a_1$ and $b_1$ are disjoint curves surrounding two marked points each. For the same reason,  we have for any for $j \in \{3,\dots,n-3\}$ that $\rho(\sigma_j\sigma_{n-1}^{-1})=H_{a_j} H_{b_j}^{-1}$ where $a_j$ and $b_j$ are disjoint curves surrounding two marked points each.  We have
\[
\sigma_1\sigma_j^{-1}=\rho(\sigma_1\sigma_j^{-1})= \rho(\sigma_1\sigma_{n-1}^{-1})\rho(\sigma_j\sigma_{n-1}^{-1})^{-1}=H_{a_1} H_{b_1}^{-1}H_{a_j}^{-1} H_{b_j}.
\]
Since $\sigma_1\sigma_{n-1}^{-1}$ commutes with $\sigma_j\sigma_{n-1}^{-1}$, it must be that the curves $a_1$, $b_1$, $a_j$, and $b_j$ have trivial intersection pairwise.  In particular, both $\sigma_1\sigma_j^{-1}$ and $H_{a_1} H_{b_1}^{-1}H_{a_j}^{-1} H_{b_j}$ are multitwists, that is, each is the product of powers of half-twists about pairwise disjoint curves.  If multitwists are equal in $\B_n$, then (up to reordering terms) they must be formally the same (they use the same curves and the same power on each curve); see \cite[Lemma 1]{lantern}.  Thus in the expression $H_{a_1} H_{b_1}^{-1}H_{a_j}^{-1} H_{b_j}$ it must be that exactly two twists cancel, and the remaining twists are $\sigma_1$ and $\sigma_j^{-1}$.  As $a_1 \neq b_1$ and $a_j \neq b_j$ there are two possibilities.  The first is that $a_1=a_j$, in which case $b_j=c_1$ and $b_1=c_j$.  The second possibility is that $b_1=b_j$, in which case $a_1=c_1$ and $a_j=c_j$.  Under the first possibility, we have that $\rho(\sigma_1\sigma_{n-1}^{-1})$ is equal to $H_{a_1}H_{c_j}^{-1}$.  But taking $j$ to be either 3 or 4 in the last expression, we get two different braids, which is a contradiction.  So we must have the second possibility, which means that $\rho(\sigma_1\sigma_{n-1}^{-1})$ is equal to $H_{c_1}H_{b_1}^{-1} = \sigma_1 H_{b_1}^{-1}$, as desired.

We next claim that the curve $d$ has the following intersection numbers:
\[
i(d,c_j) = 0\ \  \text{ for } \ \  1 \leq j \leq n-3
\]
Fix one such $j$.  We have that $\sigma_j\sigma_{n-1}^{-1}$ is conjugate in $\B_n'$ to $\sigma_1\sigma_{3}^{-1}$.  The $\rho$-image of the latter is $\sigma_1\sigma_{3}^{-1}$ and so the $\rho$-image of $\sigma_j\sigma_{n-1}^{-1}$ must be a difference of commuting half-twists.  We compute this image as follows:
\[
\rho(\sigma_j\sigma_{n-1}^{-1}) = \rho(\sigma_j\sigma_1^{-1}\sigma_1\sigma_{n-1}^{-1}) = \rho(\sigma_j\sigma_1^{-1})\rho(\sigma_1\sigma_{n-1}^{-1}) = \sigma_j\sigma_1^{-1}\sigma_1H_d^{-1} = \sigma_jH_d^{-1}.
\]
By (a version of) the Thurston construction, the difference of two non-commuting half-twists is a partial pseudo-Anosov braid, and hence is not conjugate to $\sigma_1\sigma_{3}^{-1}$; see Step 3 of Case 4 of the proof of Theorem~\ref{thm:main} in the paper by the second and third authors \cite{bnprime}.  The claim follows. 

By the previous claim, we may assume, up to conjugacy of $\rho$, that $d = c_{n-1}$ and that $\rho|B_{n-2}$ is still the standard inclusion.  

We next claim that $\rho(\sigma_1\sigma_{n-2}^{-1})$ is equal to $\sigma_1H_{e}^{-1}$ for some curve $e$ surrounding two marked points. For $n \geq 8$, we can use the same argument as the previous paragraph, replacing $n-3$ with $n-4$ (in this case $j=3$ and $j=4$ both lie in $\{3,\dots,n-4\}$).  For $n=7$ we require a different argument to rule out the analogue of the first possibility in the previous claim, namely, that $\rho(\sigma_1\sigma_{5}^{-1})$ is equal to $H_{a_1}\sigma_3^{-1}$ for some curve $a_1$ that surrounds two marked points and is disjoint from $c_3$.  By the previous claim, we have
\[
\rho(\sigma_5\sigma_6^{-1})=\rho((\sigma_1\sigma_5^{-1})^{-1}(\sigma_1\sigma_6^{-1}))=H_{a_1}^{-1}\sigma_3\sigma_1\sigma_6^{-1} = H_{a_1}^{-1}\sigma_6^{-1}\sigma_3\sigma_1.
\] 
Since $\sigma_5\sigma_6^{-1}$ is conjugate to $\sigma_1\sigma_2^{-1}$ in $B_n'$ and since $\rho(\sigma_1\sigma_2^{-1}) = \sigma_1\sigma_2^{-1}$, it must be that the action of $\rho(\sigma_5\sigma_6^{-1})$ on the set of marked points in $\D_{2n-5}$ is a 3-cycle.  On the other hand, the action of $H_{a_1}^{-1}\sigma_6^{-1}\sigma_3\sigma_1$ on the marked points is the product of three disjoint 2-cycles and one additional 2-cycle (the latter corresponding to $a_1$).  Such a permutation cannot be a 3-cycle, and the claim is proved.  

Similar to the claim regarding the $i(d,c_j)$, we claim that
\begin{align*}
i(e,c_j) = \begin{cases} 0 & 1 \leq j \leq n-4 \\
2 & j \in \{n-3,n-1\} \end{cases}
\end{align*}
For example, Since $\sigma_1\sigma_{n-2}^{-1}$ and $\sigma_1\sigma_{n-1}^{-1}$ satisfy the braid relation, the same is true for their images under $\rho$.  From this it follows that $H_e$ and $\sigma_{n-1}$ satisfy the braid relation.  As in Case 2 of the proof of Theorem~\ref{thm:castel}, two half-twists satisfy the braid relation if and only if the corresponding curves intersect in exactly two points.  Thus, $i(e,c_{n-1})=2$, as desired. 

It follows from the last claim that, after post-composing $\rho$ with an inner automorphism of $\B_{2n-5}$, we have $\rho(\sigma_1\sigma_j^{-1}) = \sigma_1\sigma_j^{-1}$ for $j \in \{2,\dots,n-1\}$, as desired.
\end{proof}

\bibliographystyle{plain}
\bibliography{bn2bn}

\end{document}